\renewcommand*\env@matrix[1][*\c@MaxMatrixCols c]{%
  \hskip -\arraycolsep
  \let\@ifnextchar\new@ifnextchar
  \array{#1}}
\newcommand{\labeltarget}[1]{\Hy@raisedlink{\hypertarget{#1}{}}}
\tikzset{
	ch/.style={circle,draw,on chain,inner sep=2pt},
	chj/.style={ch,join},
	every path/.style={shorten >=4pt,shorten <=4pt}
	}
\numberwithin{equation}{subsection}
\def\<{\langle}
\def\>{\rangle}
\def\0{{\bar 0}}
\def\1{{\bar 1}}
\newcommand{\p}[1]{\ensuremath{\overline{#1}}}
\newcommand{\B}{\mrm{B}}
\newcommand{\cC}{\mathcal{C}}
\newcommand{\cF}{\mathcal F}
\newcommand{\cH}{\mathcal{H}}
\newcommand{\cO}{\mathcal{O}}
\newcommand{\CC}{\mathbb{C}}
\newcommand{\Dirr}{\Delta_{\mathrm{irr}}}
\newcommand{\Ext}{\mrm{Ext}}
\newcommand{\fa}{\ensuremath{\mathfrak{a}}}
\newcommand{\fb}{\ensuremath{\mathfrak{b}}}
\newcommand{\ff}{\ensuremath{\mathfrak{f}}}
\newcommand{\g}{\ensuremath{\mathfrak{g}}}
\newcommand{\fg}{\ensuremath{\mathfrak{g}}}
\newcommand{\fgl}{\ensuremath{\mathfrak{gl}}}
\newcommand{\fh}{\ensuremath{\mathfrak{h}}}
\newcommand{\fii}{\ensuremath{\mathfrak{i}}}
\newcommand{\fl}{\ensuremath{\mathfrak{l}}}
\newcommand{\fn}{\ensuremath{\mathfrak{n}}}
\newcommand{\fosp}{\ensuremath{\mathfrak{osp}}}
\newcommand{\fp}{\ensuremath{\mathfrak{p}}}
\newcommand{\fpsq}{\ensuremath{\mathfrak{psq}}}
\newcommand{\fq}{\ensuremath{\mathfrak{q}}}
\newcommand{\fsl}{\ensuremath{\mathfrak{sl}}}
\newcommand{\ft}{\ensuremath{\mathfrak{t}}}
\newcommand{\Hom}{\mrm{Hom}}
\newcommand{\Lie}{\operatorname{Lie}}
\newcommand{\mrm}{\mathrm}
\newcommand{\ninj}{\nabla_{\mathrm{inj}}}
\newcommand{\opH}{\operatorname{H}}
\newcommand{\opExt}{\operatorname{Ext}}
\newcommand{\RR}{\mathbb{R}}
\newcommand{\tif}{\textup{if }}
\newcommand{\ZZ}{\mathbb{Z}}
\renewcommand{\=}[1]{\overline{#1}}
\theoremstyle{definition}
\newtheorem{Def}{Definition}[subsection]
\newtheorem{definition}[Def]{Definition}
\newtheorem{defn}[Def]{Definition}
\newtheorem{ex}[Def]{Example}
\newtheorem{rmk}[Def]{Remark}
\theoremstyle{plain}
\newtheorem{prop}[Def]{Proposition}
\newtheorem{thm}[Def]{Theorem}
\newtheorem{mainthm}{Theorem}
\newtheorem{theorem}[Def]{Theorem}
\newtheorem{lemma}[Def]{Lemma}
\newtheorem{lem}[Def]{Lemma}
\newtheorem{cor}[Def]{Corollary}
\begin{document}
\title{Category $\mathcal{O}$ for Lie superalgebras}

\author{\sc Chun-Ju Lai}
\address{Institute of Mathematics\\ Academia Sinica \\Taipei\\ 106319, Taiwan}
\thanks{Research of the first author was supported in part by
the National Center of Theoretical Sciences}

\email{cjlai@gate.sinica.edu.tw}

\author{\sc Daniel K. Nakano}
\address
{Department of Mathematics\\ University of Georgia \\Athens\\ GA~30602, USA}
\thanks{Research of the second author was supported in part by
NSF grant DMS-2401184}
\email{nakano@uga.edu}

\author{\sc Arik Wilbert}
\address
{Department of Mathematics\\ University of South Alabama \\Mobile\\ AL~36688, USA}
\thanks{Research of the third author was supported by an AMS--Simons Grant for PUI Faculty}

\email{wilbert@southalabama.edu}

\maketitle

\begin{abstract} 
The authors define a Category $\mathcal{O}$ for any quasi-reductive Lie superalgebra $\mathfrak{g}$ with respect to a triangular decomposition. This much needed approach unifies many important constructions in the existing literature in a rigorous fashion. Our Category $\mathcal{O}$ encompasses all highest weight categories for Lie (super)algebras as well as specific examples which may not be highest weight categories. When the decomposition arises from a principal parabolic subalgebra $\mathfrak{p}$ of $\mathfrak{g}$, the Category $\mathcal{O}$ exhibits rich homological properties. For one, the authors show that in contrast to the case of a semisimple Lie algebra, the Category $\mathcal{O}$ is standardly stratified. 

Furthermore, the categorical cohomology of $\mathcal{O}$ is a finitely generated ring. This provides a first step towards developing a support variety theory for Category $\mathcal{O}$. It is shown that the complexity of modules in Category $\mathcal{O}$ is finite with an explicit upper bound given by the dimension of the subspace of the odd degree elements in $\mathfrak{g}$. This upgrades results known for $\mathfrak{gl}(m|n)$ to the more general setting. Our arguments are based on foundational connections between the categorical cohomology and the relative Lie superalgebra cohomology as well as the interplay between Category $\mathcal{O}$ for $\mathfrak{g}$ and the Category $\mathcal{O}$ for its corresponding Lie algebra $\mathfrak{g}_{\bar 0}$.

%
\end{abstract} 
\vskip 1cm

\section{Introduction} 
\subsection{} 

The BGG Category $\mathcal O$ introduced by Bernstein, I.\ Gelfand, and S.\ Gelfand in the early 1970's has been a central object of study in modern representation theory for nearly fifty years. The BGG Category $\mathcal O$ is associated to a complex semisimple Lie algebra $\mathfrak{g}_{\0}$ together with a choice of Borel subalgebra $\mathfrak{b}_{\0}\subseteq\mathfrak{g}_{\0}$. More precisely, it consists of certain possibly infinite-dimensional $U(\mathfrak{g}_{\0})$-modules subject to certain finiteness conditions. Category $\mathcal O$ has many remarkable structural properties. For example, it is a highest weight category as defined by Cline, Parshall and Scott \cite{CPS88}, and it admits a Kazhdan--Lusztig theory. Deep connections to geometry (flag varieties), topology (knot invariants and categorification), as well as combinatorics (via the Kazhdan--Lusztig conjectures) have been established 
(cf.  ~\cite{BB81, BK81, EW14, KL79,Str05, Su07}). 

Given a parabolic subalgebra $\mathfrak{p}_{\0}$ containing $\mathfrak{b}_{\0}$, one can define a parabolic Category $\mathcal O^{\mathfrak{p}_{\0}}$ generalizing Category $\mathcal O$. Many of the results for Category $\mathcal O$ have analogs in the parabolic setting, in particular Category  $\mathcal O^{\mathfrak{p}_{\0}}$  is also a highest weight category. Moreover, the representation type of the associated quasi-hereditary algebras was studied in~\cite{BN05}. 
For other axiomatic approaches via BGG algebras, see Irving \cite{I90}. 

For maximal parabolics in type A, the situation is particularly nice. In this case, these algebras can be constructed diagrammatically via so-called arc algebras (cf. ~\cite{BS11,CK14}). Much of the highest weight category structure, such as the Kazhdan--Lusztig polynomials, can be encoded combinatorially in the diagrams (cf. ~\cite{BS08}). Slight variations of the arc algebras can then be used to make connections between parabolic Category $\mathcal O^{\mathfrak{p}_{\0}}$ and perverse sheaves on Grassmannians~\cite{Str09}, Springer fibers~\cite{SW12}, knot homology~\cite{Kho02}, as well as Lie superalgebras~\cite{BS12}.

\subsection{}
Given the prominence of parabolic Category ${\mathcal O}$ for complex semisimple Lie algebras, it is natural to develop the theory of parabolic Category ${\mathcal O}$ for quasi-reductive Lie superalgebras ${\mathfrak g}={\mathfrak g}_{\0}\oplus {\mathfrak g}_{\1}$. Category ${\mathcal O}$ for Lie superalgebras is not a new concept. It has been developed on a case by case basis (cf.  \cite{Br03,Br04,CLW11, CLW12}), and introduced more generally in 
\cite{Maz14, AM15}.  

For this reason, it is our goal to develop a general theory that provides a unified picture encompassing different versions of Category $\mathcal O$ for Lie superalgebras that have appeared in the literature. 
When ${\mathfrak g}={\mathfrak g}_{\0}$, this includes the Category $\mathcal O$ for a complex semisimple Lie algebra. 
We remark that some of these categories, e.g. Brundan's Category $\cO_n$ \cite{Br04} for the isomeric Lie superalgebras, are not necessarily highest weight categories.
Blocks of certain parabolic Category $\cO$ which are highest weight categories with finitely many irreducibles have previously been studied in \cite{Maz14, CCC21}.  
One of our goals is to highlight the major differences between parabolic Category ${\mathcal O}$ for complex semisimple Lie algebras as opposed to 
parabolic Category ${\mathcal O}$ for quasi-reductive Lie superalgebras. For semisimple Lie algebras, the blocks are highest weight categories, and we 
demonstrate that the general behavior for quasi-reductive Lie superalgebras is that the blocks are standardly stratified categories with possibly infinitely many irreducible representations. 
In terms of the homological algebra, modules for parabolic Category ${\mathcal O}$ for semisimple Lie algebras have finite projective dimension, thus the cohomology lives in 
finitely many degrees, and have complexity equal to zero.  On the other hand, modules for Category ${\mathcal O}$ for quasi-reductive Lie superalgebras in general have infinite global dimension and 
have complexity which can be strictly positive.

\subsection{} 

In this paper, for a finite-dimensional quasi-reductive Lie superalgebra $\fg$ that admits a triangular decomposition 
${\mathfrak g}= \fn^+ \oplus \fa \oplus \fn^-$, where ${\mathfrak p}=\fa \oplus \fn^{+}$, we develop a general theory of Category $\cO^\fp$, which is a certain full subcategory of $U({\mathfrak g})$-modules.
Our Category $\cO^\fp$ encompasses numerous variations of Category $\cO$ in the literature. This includes the parabolic Category ${\mathcal O}$ for Lie algebras (by setting $\fa = \fl$ to be a Levi subalgebra),
and the category $\cF$ of finite-dimensional modules which are completely reducible over $\fg_\0$, by setting $\fa = \fg_\0$ as the even part of a Lie superalgebra of Type I.
Section~\ref{sec:examples} contains a compendium of examples covered by our theory.

One can then specialize our definition to the case where ${\fp}$ is a parabolic subalgebra as defined by 
Grantcharov and Yakimov \cite{GY13}. In fact, in several cases our main results will be stated in the case of $\fp$ being a {\em principal} parabolic subalgebra (see \cite[Section 1]{GY13}). 
With this setup, one can rely on using known results from the parabolic Category ${\mathcal O}^{{\mathfrak p}_{\0}}$ for ${\mathfrak g}_{\0}$ to prove fundamental theorems for ${\mathcal O}^{\mathfrak p}$. 
A precursor to our approach in which the parabolic Category ${\mathcal O}$ for Lie algebras was used to study Lie superalgebra cohomology appeared in Ph.D dissertation work of Maurer \cite{Ma19}. 

From a historical viewpoint, a similar setting via triangular decompositions for finite-dimensional modules was developed by Bellamy and Thiel \cite{BT18}. They generalized the ideas of Holmes and Nakano \cite{HN91} by considering the module category over a finite-dimensional algebra $A$ admitting a triangular decomposition. The authors were certainly inspired by the module constructions in \cite{HN91,BT18}. However, many of the arguments cannot be directly applied in our context because of the need to work with infinite-dimensional representations. A more elaborate setup using triangular decompositions can be found in work of Brundan and Stroppel \cite{BS24}.

\subsection{Main Results} 

The main results in the paper are  based on the choice of a principal parabolic subalgebra $\mathfrak{p}={\mathfrak l}\oplus {\mathfrak n}^{+}$ of $\mathfrak{g}$, where $\fa = \fl$ is a Levi subalgebra. The first main result establishes a 
direct connection between the cohomology in the Category ${\mathcal O}^{\mathfrak p}$ and the relative Lie superalgebra cohomology for the pair $({\mathfrak g},{\mathfrak l}_{\0})$. It is surprising to the authors that a formal general statement of this type has not appeared in the Lie superalgebra literature. 

\begin{mainthm} \label{T:cohomology}
If $\fp$ is a principal parabolic subalgebra of $\fg$, then 
the homological algebra for $\cO^\fp$ coincides with the relative cohomology for $(\fg, \fl_\0)$. 
For $M, N \in \cO^\fp$ and $i\geq 0$,
\[
\Ext^i_{\cO^\fp}(M,N) \cong \Ext^i_{(\fg, \fl_\0)}(M,N).
\]
\end{mainthm}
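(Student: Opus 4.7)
The plan is to compute both Ext groups using a common resolution. Since $\mathcal{O}^{\fp}$ has enough projectives (obtained by parabolic induction from $\fp$) and is closed under extensions in the category of $U(\fg)$-modules, $\Ext^i_{\mathcal{O}^{\fp}}(M,N)$ is the cohomology of $\Hom_\fg(P_\bullet,N)$ for any projective resolution $P_\bullet\to M$ in $\mathcal{O}^{\fp}$. If each $P_j$ is also $(\fg,\fl_\0)$-projective, then the very same complex computes $\Ext^i_{(\fg,\fl_\0)}(M,N)$ as well, giving the desired isomorphism.

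The core task is therefore to show each projective $P\in\mathcal{O}^{\fp}$ is $(\fg,\fl_\0)$-projective, i.e., a direct summand of $U(\fg)\otimes_{U(\fl_\0)}W$ for some $\fl_\0$-module $W$. The plan is to bootstrap from the non-super setting, exactly in the spirit of the paper's slogan that $\mathcal{O}^{\fp_\0}$ controls $\mathcal{O}^{\fp}$: by the parabolic analogue of Humphreys' Theorem~6.15, the projective covers $P^{\fp_\0}(\lambda)$ in $\mathcal{O}^{\fp_\0}$ for the reductive Lie algebra $\fg_\0$ are $(\fg_\0,\fl_\0)$-projective. Since $U(\fg)$ is free as a left $U(\fg_\0)$-module by super PBW, the induction $\ind^{\fg}_{\fg_\0}=U(\fg)\otimes_{U(\fg_\0)}(-)$ is exact and transports the summand property via the transitivity isomorphism $U(\fg)\otimes_{U(\fg_\0)}\bigl(U(\fg_\0)\otimes_{U(\fl_\0)}W\bigr)\cong U(\fg)\otimes_{U(\fl_\0)}W$. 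One then identifies the super projective cover $P^{\fp}(\lambda)\in\mathcal{O}^{\fp}$ as a direct summand of $\ind^{\fg}_{\fg_\0}P^{\fp_\0}(\lambda)$, via Frobenius reciprocity together with the projective-cover property.

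The main obstacle is this last summand identification: proving that $P^{\fp}(\lambda)$ really sits as a $U(\fg)$-direct summand inside $\ind^{\fg}_{\fg_\0}P^{\fp_\0}(\lambda)$. This requires a structural understanding of how projectives and highest-weight data in $\mathcal{O}^{\fp}$ relate to those in $\mathcal{O}^{\fp_\0}$ under super-induction and restriction, and compatibility of the projective-cover constructions on the two sides. Once this is secured, the resolution $P_\bullet\to M$ in $\mathcal{O}^{\fp}$ is simultaneously a $(\fg,\fl_\0)$-projective resolution, and the isomorphism $\Ext^i_{\mathcal{O}^{\fp}}(M,N)\cong\Ext^i_{(\fg,\fl_\0)}(M,N)$ follows from the common Hom complex.
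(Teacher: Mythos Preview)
Your approach is correct and shares its core with the paper's proof, though the packaging differs. Both hinge on the fact that each projective $P(\lambda)\in\cO^\fp$ is a direct summand of $U(\fg)\otimes_{U(\fg_\0)}P_\0(\sigma)$ for suitable $\sigma$---your ``main obstacle'' is precisely the paper's Proposition~\ref{P:projprop}, proved in two lines via Frobenius reciprocity---and both then reduce the relative vanishing to the known Lie-algebra identification $\Ext_{\cO^{\fp_\0}}\cong\Ext_{(\fg_\0,\fl_\0)}$. The difference is that the paper routes through a Grothendieck spectral sequence $\Ext^i_{\cO^\fp}(M,R^j\cF(N))\Rightarrow\Ext^{i+j}_{(\fg,\fl_\0)}(M,N)$ for the truncation functor $\cF$ and proves collapse by showing $R^j\cF(N)=0$ via $\Ext^j_{(\fg,\fl_\0)}(P(\lambda),N)=0$, whereas you bypass the spectral sequence and argue directly that a projective resolution in $\cO^\fp$ computes both Ext groups. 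Your route is more elementary; the paper's has the advantage that the spectral sequence is reused later (notably in the Ext-orthogonality Proposition~\ref{prop:exto} underlying the stratification theorem), so the machinery is not wasted.

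One imprecision to tighten: you assert that $P^{\fp_\0}_\0(\lambda)$ is $(\fg_\0,\fl_\0)$-projective in the absolute sense. What you actually need---and what follows immediately from the known identification $\Ext_{\cO^{\fp_\0}}\cong\Ext_{(\fg_\0,\fl_\0)}$---is only the acyclicity $\Ext^j_{(\fg_\0,\fl_\0)}(P_\0(\sigma),N)=0$ for $j>0$ and $N\in\cO^{\fp_\0}$. Combined with Lemma~\ref{L:ptop0} (restriction sends $\cO^\fp$ into $\cO^{\fp_\0}$) and Frobenius reciprocity, this gives $\Ext^j_{(\fg,\fl_\0)}(P_i,N)=0$ for all $N\in\cO^\fp$. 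Since every exact sequence in $\cO^\fp$ is automatically $\fl_\0$-split (all objects are $\fl_\0$-semisimple by (O2)), your resolution $P_\bullet\to M$ is a $(\fg,\fl_\0)$-exact complex of $\Hom_\fg(-,N)$-acyclics, and the standard acyclic-resolution lemma in relative homological algebra finishes the argument. This is exactly the computation carried out in the paper's Proposition~\ref{prop:RjF0}.
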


We utilize this important connection to relative Lie superalgebra cohomology to explicitly realize the cohomology ring and to establish finite generation results for Category ${\mathcal O}^{\mathfrak p}$. 
Statement (a) below indicates that the cohomology ring for $\cO^\fp$ has a tensor component that is the cohomology ring for the parabolic Category ${\mathcal O}$ for ${\mathfrak g}_{\0}$. 
The reader should also note that modules in $\cO^\fp$ can be infinite-dimensional. The statements in Theorem~\ref{T:finite generation} below are rather striking and non-trivial. 

\begin{mainthm} \label{T:finite generation}Let $\fp$ be a principal parabolic subalgebra of $\fg$. Then 
\begin{itemize} 
\item[(a)] The cohomology ring $R$ is a finitely generated ${\mathbb C}$-algebra. Furthermore, as a ring 
$$R\cong\Ext^{\bullet}_{{\cO^\fp}}(\mathbb{C},\mathbb{C})\cong S^{\bullet}({\mathfrak g}_{\1}^{*})^{G_{\0}}\otimes \opH^{\bullet}(\fg_{\0},{\mathfrak l}_{\0},{\mathbb C})\cong 
S^{\bullet}({\mathfrak g}_{\1}^{*})^{G_{\0}}\otimes \Ext^{\bullet}_{{\cO^{\fp_{\0}}}}(\mathbb{C},\mathbb{C}).$$
\item[(b)] If $M,\ N$ are finite-dimensional modules in $\cO^\fp$, then $\Ext^{\bullet}_{\cO^\fp}(M,N)$ is a finitely generated $R$-module. 
\item[(c)]  Assume further that ${\mathfrak g}$ is an almost simple Lie superalgebra (see Section \ref{sec:almostsimple}). Then 
for any $M$ finite-dimensional and $N$ in $\cO^\fp$, $\Ext^{\bullet}_{\cO^\fp}(M,N)$ is a finitely generated $R$-module. 
\end{itemize} 
\end{mainthm}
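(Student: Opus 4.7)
The plan for part~(a) is to use Theorem~\ref{T:cohomology} to identify $R=\Ext^\bullet_{\cO^\fp}(\mathbb{C},\mathbb{C})$ with the relative Lie superalgebra cohomology $\opH^\bullet(\fg,\fl_\0,\mathbb{C})$, and then compute the latter via the Hochschild--Serre spectral sequence associated to the chain of pairs $(\fg_\0,\fl_\0)\subseteq(\fg,\fl_\0)$,
$$E_2^{p,q}=\opH^p(\fg_\0,\fl_\0,\opH^q(\fg,\fg_\0,\mathbb{C}))\Longrightarrow\opH^{p+q}(\fg,\fl_\0,\mathbb{C}).$$
The inner cohomology is computed directly from the Chevalley--Eilenberg complex $\Hom_{\fg_\0}(\Lambda^\bullet(\fg/\fg_\0),\mathbb{C})$ in the super sense: since $\fg/\fg_\0\cong\fg_\1$ is purely odd, the super-exterior algebra is the symmetric algebra $S^\bullet(\fg_\1)$, and the Chevalley--Eilenberg differential vanishes on $G_\0$-invariants with trivial coefficients, giving $\opH^q(\fg,\fg_\0,\mathbb{C})\cong S^q(\fg_\1^*)^{G_\0}$ with trivial induced $\fg_\0$-action. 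Thus $E_2^{p,q}\cong \opH^p(\fg_\0,\fl_\0,\mathbb{C})\otimes S^q(\fg_\1^*)^{G_\0}$, identified with $\Ext^\bullet_{\cO^{\fp_\0}}(\mathbb{C},\mathbb{C})\otimes S^\bullet(\fg_\1^*)^{G_\0}$ via the classical Lie-algebra version of Theorem~\ref{T:cohomology}. The principal parabolic assumption supplies a compatible bigrading of $\fg/\fl_\0$ respecting the even/odd splitting; I plan to exploit this to construct an explicit algebra map $S^\bullet(\fg_\1^*)^{G_\0}\to R$ splitting the edge map of the spectral sequence, which forces degeneration at $E_2$ and yields the stated tensor product decomposition. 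Finite generation of $R$ is then automatic: $S^\bullet(\fg_\1^*)^{G_\0}$ is finitely generated by Hilbert's theorem for the reductive $G_\0$-action on $\fg_\1^*$, and $\Ext^\bullet_{\cO^{\fp_\0}}(\mathbb{C},\mathbb{C})$ is finite-dimensional.

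For part~(b), the strategy is to replay the same spectral sequence with coefficients in the finite-dimensional $(\fg,\fl_\0)$-module $V:=\Hom_\mathbb{C}(M,N)$, using Theorem~\ref{T:cohomology} to rewrite $\Ext^\bullet_{\cO^\fp}(M,N)\cong\opH^\bullet(\fg,\fl_\0,V)$. The key computation is $\opH^\bullet(\fg,\fg_\0,V)\cong (S^\bullet(\fg_\1^*)\otimes V)^{G_\0}$, which is finitely generated over $S^\bullet(\fg_\1^*)^{G_\0}$ by Hilbert's finiteness theorem for reductive group actions on finitely generated modules. Combined with the fact that $\opH^p(\fg_\0,\fl_\0,-)$ applied to any finite-dimensional module vanishes above a fixed degree, the $E_2$ page is finitely generated over the ring from part~(a). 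A multiplicative-comparison argument tracking the $R$-action through the filtration (and using the fact that the Yoneda product on $\Ext^\bullet(\mathbb{C},\mathbb{C})$ is compatible with the spectral sequence) then transfers finite generation to the abutment.

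For part~(c), $V=\Hom_\mathbb{C}(M,N)$ is no longer finite-dimensional, but because $M$ is finite-dimensional and $N\in\cO^\fp$, the module $V$ retains a locally finite weight decomposition inherited from $\cO^\fp$. My plan is to use the almost simplicity of $\fg$ to pin down $\fg_\1$ as a controlled $\fg_\0$-module (typically a simple module or the sum of two dual simples), and to combine this with the weight filtration on $V$ to show that $(S^\bullet(\fg_\1^*)\otimes V)^{G_\0}$ remains finitely generated over $S^\bullet(\fg_\1^*)^{G_\0}$; the spectral sequence argument from part~(b) then propagates finite generation to $\Ext^\bullet_{\cO^\fp}(M,N)$. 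The main obstacle is twofold: establishing $E_2$-degeneration together with the multiplicative comparison needed to identify the $R$-module structure cleanly in parts~(b) and~(c), and, specifically in part~(c), controlling the $G_\0$-invariants of an infinite-dimensional tensor product in a way that genuinely uses the almost simplicity hypothesis rather than merely reducing to the finite-dimensional case.
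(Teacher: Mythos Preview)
Your approach to parts (a) and (b) is broadly aligned with the paper's, but the paper takes a different route to the degeneration in (a), and your plan for (c) is missing the key ingredients.

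For (a), the paper does not construct a splitting of the edge map for $S^{\bullet}(\fg_{\1}^{*})^{G_{\0}}$ from a bigrading. Instead it uses the parity fact that $\opH^{\bullet}(\fg_{\0},\fl_{\0},\mathbb{C})\cong \Ext^{\bullet}_{\cO^{\fp_{\0}}}(\mathbb{C},\mathbb{C})$ is concentrated in \emph{even} degrees---a consequence of Kazhdan--Lusztig theory for parabolic Category $\cO$---to force the Maurer spectral sequence (Theorem~\ref{thm:Maurer}) to collapse. The ring isomorphism is then obtained by combining this with an explicit subcomplex decomposition $C^{n}(\fg,\fl_{\0},\mathbb{C})=C^{n}(\fg,\fl_{\0},\mathbb{C})_{(1)}\oplus C^{n}(\fg_{\0},\fl_{\0},\mathbb{C})$ that exhibits $\opH^{\bullet}(\fg_{\0},\fl_{\0},\mathbb{C})$ as a subring of the abutment (see Theorem~\ref{T:sscollapse} and Corollary~\ref{C:cohoringcalc}). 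Your idea might also work, but you have not said how to build the section, whereas the parity input is already available and clean.

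For (b), your identity $\opH^{\bullet}(\fg,\fg_{\0},V)\cong (S^{\bullet}(\fg_{\1}^{*})\otimes V)^{G_{\0}}$ is not correct with nontrivial coefficients: the second sum in the Chevalley--Eilenberg differential involves the $\fg_{\1}$-action on $V$ and need not vanish. What holds is that $\opH^{\bullet}(\fg,\fg_{\0},V)$ is a \emph{subquotient} of the cochains $C^{\bullet}(\fg,\fg_{\0},V)=(S^{\bullet}(\fg_{\1}^{*})\otimes V)^{G_{\0}}$, and since the latter is a finitely generated module over the Noetherian ring $R$ by Hilbert's theorem, so is any subquotient. The paper argues exactly this way, at the cochain level (Theorem~\ref{T:cohomologycalc}(b)--(c)).

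Part (c) is where your proposal has a genuine gap. The paper does \emph{not} proceed by pinning down $\fg_{\1}$ as a $\fg_{\0}$-module. It invokes a Kostant--Rallis type harmonic decomposition from \cite{BKN10a},
\[
S^{\bullet}(\fg_{\1}^{*})\cong S^{\bullet}(\fg_{\1}^{*})^{G_{\0}}\otimes[\operatorname{ind}_{H}^{G_{\0}}\mathbb{C}]_{\bullet}
\]
as $R$-modules, where $H\leq G_{\0}$ is the subgroup attached to the detecting subalgebra. This reduces finite generation to showing that $\Hom_{U(\fg_{\0})}(\mathbb{C},[\operatorname{ind}_{H}^{G_{\0}}\mathbb{C}]\otimes\widehat{N})$ is finite-dimensional for $\widehat{N}=M^{*}\otimes N\in\cO^{\fp}$. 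Via Frobenius reciprocity and restriction to a carefully chosen one- or two-dimensional torus $\widehat{T}\leq H$, this becomes the statement that only finitely many weights of $U(\fn_{\0}^{+})$ restrict to a given $\widehat{T}$-weight. That last step is carried out \emph{case by case} in the Appendix for each almost simple $\fg$, by exhibiting an explicit $\widehat{T}$ that positively grades all positive roots of $\fg_{\0}$. The almost-simple hypothesis enters precisely here: both the harmonic decomposition and the explicit $\widehat{T}$ come from the classification. Your proposal does not identify either ingredient, and without them the infinite-dimensional invariants you describe are not under control.
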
 

The finite generation result in Theorem~\ref{T:finite generation} can be seen as a first step towards developing a support variety theory for our Category $\cO^\fp$. This would provide a geometric framework for an important piece of  the representation theory of $\cO^\fp$. It is well-known that the cohomology ring is in general too small to control the entire representation theory of $\cO^\fp$ (cf. \cite{BKN17}), and that a large geometric space, involving detecting subalgebras, will be needed as a key tool in studying the complexity of modules. 

In the original definition of the standardly stratified categories \cite[Definition~2.2.1]{CPS96},
there can only be finitely many simple modules,
which does not capture the general behavior of our Category $\cO^\fp$. Furthermore, the conditions involve using projective covers instead of injective hulls, and hence cannot be used to describe categories without enough projectives. 
This means that this definition of standardly stratified does not generalize the concept of a highest weight category. For instance, the category of rational $G$-modules where $G$ is a reductive group in positive characteristic 
is a highest weight category, but not standardly stratified. 

In Section \ref{sec:CFO}, we formulate a definition of a standardly stratified category that directly generalizes the original definition of a highest weight category due to Cline, Parshall, and Scott via injective modules (see Theorem \ref{HWCimpliesSS}). 
Moreover, our definition allows for the category to have infinitely many simples. Using Theorem~\ref{T:cohomology}, 
we are able to establish an orthogonality result between standard and costandard type modules. This enables us to prove that our Category $\mathcal O^{\mathfrak{p}}$ is standardly stratified with our new 
definition. 

\begin{mainthm}\label{T:stratification} If $\fp$ is a principal parabolic subalgebra of $\fg$, then 
the Category $\cO^\fp$ is standardly stratified as described in  Definition~\ref{def:ssc}.
\end{mainthm}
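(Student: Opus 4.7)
The plan is to verify the axioms of Definition~\ref{def:ssc} by (i) identifying the right families of standard and costandard objects $\{\Delta(\lambda)\}$ and $\{\nabla(\lambda)\}$ in $\cO^\fp$ indexed by a natural poset, (ii) proving the key $\Ext$-orthogonality between these families using Theorem~\ref{T:cohomology}, and (iii) producing injective hulls equipped with filtrations by costandards, lifted from the parabolic Category $\cO^{\fp_{\0}}$ of the reductive part.

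For the first two stages, each simple module of the principal Levi $\fl$ is obtained (up to appropriate inflation) from a simple $\fl_{\0}$-module via the action of $\fl_{\1}$; extending it trivially along $\fn^+$ and parabolically inducing from $\fp$ to $\fg$ defines $\Delta(\lambda)$, while a dual coinduction from $\fp^-=\fl\oplus\fn^-$ defines $\nabla(\lambda)$. The index set is inherited from the weight poset of $\cO^{\fp_{\0}}$, refined by discrete labels for the simples of $\fl$ and equipped with the Bruhat-type order pulled back along this projection. Orthogonality is then established by applying Theorem~\ref{T:cohomology} to obtain
\[
\Ext^i_{\cO^\fp}(\Delta(\lambda), \nabla(\mu)) \cong \Ext^i_{(\fg,\fl_{\0})}(\Delta(\lambda), \nabla(\mu)),
\]
and computing the right-hand side via a Frobenius reciprocity and PBW argument that reduces the problem to $\Ext^{\bullet}_{\cO^{\fp_{\0}}}$ between standard and costandard objects in the highest weight category $\cO^{\fp_{\0}}$. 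The latter is known to be $\mathbb{C}$ concentrated in degree zero when $\lambda=\mu$ and to vanish otherwise.

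For the final stage, I will exploit that coinduction from $U(\fp^-)$-modules to $\cO^\fp$ has an exact left adjoint (restriction) and therefore preserves injectivity. Lifting the costandard filtrations of injective hulls from the highest weight category $\cO^{\fp_{\0}}$ through this coinduction, and using the orthogonality from the previous step to track multiplicities, yields injective objects in $\cO^\fp$ whose socle contains the prescribed simple and which admit filtrations by $\nabla(\mu)$'s indexed by weights above $\lambda$ in the partial order. The main obstacle will be the bookkeeping here: because $\fn^-_{\1}$ acts through a possibly non-semisimple structure and the poset may be infinite, the lifted filtrations are a priori infinite, and one must show that truncation by the finiteness conditions defining $\cO^\fp$ yields finite-length $\nabla$-filtrations on each weight-graded piece that remain compatible with the order. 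The orthogonality from Theorem~\ref{T:cohomology} is precisely what controls the multiplicities in these truncated filtrations, allowing one to extract the injective hull as a summand carrying the required standardly stratified filtration.
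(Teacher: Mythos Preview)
Your outline has the right architecture (standards induced from $\fp$, costandards coinduced from $\fp^-$, an orthogonality result, then a filtration argument), but two of the key steps do not go through as written.

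First, your costandard $\nabla(\lambda)$ is coinduced from the \emph{simple} $\fl$-module. The paper instead sets $\nabla(\lambda)=\nabla_{\mathrm{inj}}(\lambda)$, the (truncated) coinduction of the \emph{injective hull} $I_{\fa}(\lambda_0)$ in $\cC_{(\fa,\fa_{\0})}$. This matters precisely when $\fl_{\1}\neq 0$, e.g.\ for $\fq(n)$: after Frobenius reciprocity and the PBW freeness over $U(\fn^{\pm})$, one lands in $\Ext^{n}_{(\fl,\fl_{\0})}(L_{\fl}(\lambda_0),\,-)$, and with a simple in the second variable these groups are generally nonzero for $n>0$ (the category $\cC_{(\fl,\fl_{\0})}$ is not semisimple). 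With the injective hull in the second variable they vanish, which is exactly what Proposition~\ref{prop:exto} uses. So your orthogonality claim fails unless you change the definition of $\nabla$.

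Second, the reduction you describe is to $\Ext^{\bullet}_{\cO^{\fp_{\0}}}$ between standard and costandard objects for $\fg_{\0}$. That is not where the Frobenius/PBW argument lands; it lands in the Levi category $\cC_{(\fl,\fl_{\0})}$, not in $\cO^{\fp_{\0}}$. Relatedly, the paper's proset on $\Lambda$ is not a Bruhat-type order pulled back from $\cO^{\fp_{\0}}$ but is defined via the values of the functional $\cH$ coming from the principal parabolic (see \eqref{eq:Ofaposet}); this is what makes the check of condition~(c) a one-line weight computation. Finally, the paper avoids the lifting-of-filtrations bookkeeping you flag as the main obstacle: once orthogonality holds, Lemma~\ref{lem:Jantzen} and Theorem~\ref{T:Jantzenlem} give a direct $\Ext^1$-vanishing criterion for a $\nabla$-filtration, and for the injective $I(\lambda)$ this vanishing is automatic. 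Your proposed route through coinducing filtrations from $\cO^{\fp_{\0}}$ is not needed and, as you suspect, would be delicate to make precise.
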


Finally, consider the {\em complexity} for $\cO^\fp$ given by, for $M \in \cO^\fp$, 
\eq\label{def:cM}
c_\fp(M) := r(\Ext^i_{\cO^\fp}(M, {\textstyle \bigoplus_{L \in \textup{Irr}\cO^\fp}} L )  ),
\endeq
where $r$ denotes the rate of growth.
As a consequence of Theorem~\ref{T:cohomology}, we are able to bound the complexity $c_\fp(M) = r(\Ext^i_{(\fg, \fl_\0)}(M, \bigoplus_{L \in \textup{Irr}\cO^\fp}L )  )$ using the rate of growth of the Koszul resolution.
\begin{mainthm}\label{T:complexity}
If $\fp$ is a principal parabolic subalgebra of $\fg$, then 
the complexity $c_\fp(M)$ of any $M \in \cO^\fp$ is bounded above by the dimension of the odd part $\fg_\1$.
\end{mainthm}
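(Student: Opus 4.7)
The plan is to combine Theorem~\ref{T:cohomology} with an explicit resolution whose $i$-th term grows polynomially in $i$ of degree $\dim \fg_\1 - 1$. By Theorem~\ref{T:cohomology}, the complexity $c_\fp(M)$ equals the rate of growth of $\Ext^i_{(\fg, \fl_\0)}(M, \bigoplus_{L} L)$, and the latter is directly computable via a relative Chevalley--Eilenberg resolution.

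Specifically, the relative Koszul resolution of the trivial $\fg$-module $\CC$ for the pair $(\fg, \fl_\0)$ has the form
\[
\cdots \to U(\fg) \otimes_{U(\fl_\0)} \Lambda^n(\fg/\fl_\0) \to \cdots \to U(\fg) \otimes_{U(\fl_\0)} \CC \to \CC \to 0,
\]
where $\Lambda^\bullet$ denotes the super-exterior algebra: the ordinary exterior algebra on the even quotient $\fg_\0/\fl_\0$ tensored with the symmetric algebra on the odd part $\fg_\1$. Setting $d_0 = \dim(\fg_\0/\fl_\0)$ and $d_1 = \dim \fg_\1$, one computes
\[
\dim \Lambda^n(\fg/\fl_\0) \;=\; \sum_{p+q=n} \binom{d_0}{p}\binom{d_1 + q - 1}{q}.
\]
The even factor contributes at most $2^{d_0}$, a bounded constant, while $\dim S^q(\fg_\1) = \binom{d_1 + q - 1}{q}$ is a polynomial in $q$ of degree $d_1 - 1$. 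Hence $\dim \Lambda^n(\fg/\fl_\0)$ is a polynomial in $n$ of degree $d_1 - 1$, with rate of growth exactly $d_1 = \dim \fg_\1$.

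Using the tensor identity $\bigl(U(\fg)\otimes_{U(\fl_\0)} V\bigr) \otimes_\CC M \cong U(\fg)\otimes_{U(\fl_\0)} (V\otimes_\CC M)$, I would tensor this resolution with $M$ to obtain a resolution by relatively projective modules, and then apply $\Hom_\fg(-, N)$ to identify $\Ext^i_{(\fg, \fl_\0)}(M, N)$ with the $i$-th cohomology of the cochain complex
\[
C^\bullet \;:=\; \Hom_{\fl_\0}\!\bigl( \Lambda^\bullet(\fg/\fl_\0) \otimes M,\, N \bigr).
\]
In particular $\dim \Ext^i_{(\fg, \fl_\0)}(M, N) \leq \dim C^i$. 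For $N = \bigoplus_{L \in \Irr \cO^\fp} L$ and $M \in \cO^\fp$, the $\fl_\0$-reductivity together with the finite-dimensional $\fl_\0$-isotypic structure of objects in $\cO^\fp$ ought to give $\dim C^i \leq \kappa \cdot \dim \Lambda^i(\fg/\fl_\0)$ for some constant $\kappa = \kappa(M)$ independent of $i$; combined with the polynomial estimate above this forces $c_\fp(M) \leq \dim \fg_\1$.

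The main obstacle lies in this last bound, because the direct sum $\bigoplus_L L$ may be infinite: one must show that for each fixed $i$ only finitely many simples $L$ contribute to $C^i$, and that the resulting total dimension remains polynomially bounded in $i$ of degree $\dim \fg_\1 - 1$. This reduces to matching $\fl_\0$-weights of $\Lambda^i(\fg/\fl_\0) \otimes M$ against the highest-weight labels indexing $\Irr \cO^\fp$. An equivalent and possibly cleaner route is to set up a Hochschild--Serre spectral sequence for the tower $\fl_\0 \subseteq \fg_\0 \subseteq \fg$, which concentrates the polynomial growth in the $q$-direction coming from $S^q(\fg_\1)$, while the $(\fg_\0,\fl_\0)$-relative factor has finite global dimension and hence contributes only a bounded range of $p$.
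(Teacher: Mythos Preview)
Your alternative route via the Hochschild--Serre spectral sequence for the tower $\fl_\0 \subseteq \fg_\0 \subseteq \fg$ is exactly what the paper does, and your overall strategy (Koszul resolution, polynomial growth from $S^\bullet(\fg_\1)$) is correct. The gap you flag, however, is the entire crux of the argument, and the paper's resolution of it requires two non-obvious uniform bounds that you have not identified.

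After passing to $(\fg,\fg_\0)$-cohomology via the spectral sequence, the paper tensors the $(\fg,\fg_\0)$-Koszul resolution $D_j = U(\fg)\otimes_{U(\fg_\0)} S^j(\fg_\1^*)$ with $M$ and arrives at cochains
\[
C_j \;\cong\; \Hom_{\fg_\0}\bigl(S^j(\fg_\1^*)\otimes M,\ \textstyle\bigoplus_{\lambda\in\Lambda} L(\lambda)\bigr).
\]
To bound $\dim C_j$ the paper needs, first, that the simple $\fg$-modules $L(\lambda)$ are \emph{universally $\bar 0$-bounded}: the $\fg_\0$-composition multiplicities $[L(\lambda):L_\0(\mu)]$ are bounded by a single constant independent of $\lambda$ and $\mu$. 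This is proved by surjecting $U(\fg)\otimes_{U(\fg_\0)} Z_\0(\sigma) \cong U(\fg_\0)\otimes_{U(\fp_\0)}[\Lambda^\bullet(\fg_\1)\otimes L_{\fp_\0}(\sigma)]$ onto $L(\lambda)$ and counting. Second, the paper needs that the composition multiplicities $[Z_\0(\gamma):L_\0(\tau)]$ inside parabolic Verma modules for $\fg_\0$ are uniformly bounded over all $\gamma,\tau$; this follows from Proposition~\ref{Cat O-Liealg prop}(c), ultimately because $\cO^{\fp_\0}$ has only finitely many blocks up to Morita equivalence. With these two inputs one replaces $\bigoplus_{\lambda\in\Lambda} L(\lambda)$ by $\bigoplus_{\mu\in\Lambda_\0} L_\0(\mu)$ at the cost of a constant, and then the count of $\fg_\0$-composition factors of $S^j(\fg_\1^*)\otimes Z_\0(\sigma)$ is controlled by $\dim S^j(\fg_\1^*)$ times a constant.

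Your sketch stops at ``$\fl_\0$-reductivity together with the finite-dimensional $\fl_\0$-isotypic structure ought to give $\dim C^i \leq \kappa\cdot\dim\Lambda^i(\fg/\fl_\0)$''. This is false as stated: for a single irreducible $\fl_\0$-module $V$, the space $\Hom_{\fl_\0}(V,\bigoplus_L L)$ is already typically infinite-dimensional, since infinitely many simple $\fg$-modules $L$ contain $V$ as an $\fl_\0$-constituent. The finiteness has to be extracted not from $\fl_\0$-semisimplicity but from the two uniform multiplicity bounds above, together with the fact that $M$ has a finite $\fg_\0$-composition series. Without those, there is no constant $\kappa$.
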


This result generalizes the result proved for ${\mathfrak g}=\mathfrak{gl}(m|n)$ due to Coulembier and Serganova,~\cite[Theorem 7.7]{CS17}.

\vskip .25cm
\noindent {\bf Acknowledgements.}
We acknowledge Jonathan Brundan, Chih-Whi Chen, Shun-Jen Cheng, and Volodymyr Mazorchuk for their input and insights about Category ${\mathcal O}$. Many of their comments were incorporated in this paper.
Moreover, the authors thank Academia Sinica, Shenzhen International Center for Mathematics, and the National Center for Theoretical Sciences (NCTS) for their hospitality and support during the completion of this paper.

\section{Categories with Filtered Objects}\label{sec:CFO}

\subsection{} In the representation theory of algebraic groups, Lie algebras, and finite groups, the modules need not be completely reducible. An effective method  to study the structure of various modules involves the  
use of filtrations. The notion of a  {\em highest weight category}, as introduced by Cline, Parshall, and Scott in \cite{CPS88}, has become a ubiquitous concept in providing a fundamental framework for studying filtrations whose subquotients are natural classes of representations. 

In this section, we present a definition of a {\em standardly stratified category} that differs from the one that was first given by Cline, Parshall, and Scott \cite{CPS96}. There are several reasons for stating this 
definition precisely. First, our definition of standardly stratified is a true generalization of highest weight categories as first defined by Cline, Parshall, and Scott~\cite{CPS88}. For example, the categories of rational $G$-modules where $G$ is a reductive algebraic group is a highest weight category. This category has infinitely many simple modules and does not have enough projectives. Thus, this category would not be considered a standardly stratified category under the definitions given in \cite{CPS96} and its variants in \cite{ADL98, Dl00, Fr07, LW15, BS24}. However, with our definition (see Definition~\ref{def:ssc}), the category of rational $G$-modules is indeed standardly stratified. 

Second, our definition will include blocks for Category ${\mathcal O}$ for Lie superalgebras where there are infinitely many simple modules. Moreover, our definition does not need the restricted assumptions 
with the poset $\Lambda$ having to be finite, and objects of $\cC$ being of finite length. The definition given in the following sections are much more compatible with our setting, and can be used in our prototypical examples  such as Brundan's Category $\cO_n$ \cite{Br04} for the queer Lie superalgebra $\fq(n)$.



\subsection{Highest Weight Categories}
For the convenience of the reader, we provide a precise definition of a highest weight category as first given in \cite[Definition~3.1]{CPS88}.
Recall that a locally Artinian category is a category $\cC$ such that any directed union of subobjects lies in $\cC$ and every object in $\cC$ is a union of subobjects of finite length.
The category $\cC$ is said to satisfy the Grothendieck condition if, for any subobject $B$ and any family $\{A_i\}_i$ of subobjects of an object $X$:
\eq \label{eq:GroCond}
B \cap ( \bigcup\nolimits_i A_i) = \bigcup\nolimits_i (B \cap A_i).
\endeq

\begin{definition}\label{def:hwc}
A {\em highest weight category} is a locally Artinian category $\cC$ with enough injectives, satisfying \eqref{eq:GroCond}, and 
 equipped with the following datum: 
\begin{enumerate}
\item[(a)]  An interval-finite poset $\Lambda$ (i.e., for all $x, y \in \Lambda, \#\{z\in \Lambda ~|~x< z < y\} < \infty$).
\item[(b)] A complete collection $\{L(\lambda)\}_{\lambda\in \Lambda}$ of non-isomorphic simple objects. 
\item[(c)]  A collection $\{\nabla(\lambda)\}_{\lambda\in \Lambda}$ of {\em costandard} objects in the following sense:
\begin{itemize}
\item[(i)] There is an embedding $L(\lambda) \hookrightarrow \nabla(\lambda)$.
\item[(ii)] If $[\nabla(\lambda)/L(\lambda): L(\mu)] \neq 0$, then $\mu < \lambda$.
\item[(iii)] For $\lambda,\mu\in \Lambda$, both $\dim \Hom_\cC(\nabla(\lambda), \nabla(\mu))$ and $[\nabla(\lambda):L(\mu)]$ are finite.
\end{itemize}
\item[(d)]  Each injective hull $I(\lambda)$ of $L(\lambda)$ has a $\nabla$-filtration $(F_i(\lambda))_{i\geq 0}$ such that
\begin{itemize}
\item[(i)] $I(\lambda) = \bigcup_{i\geq 0} F_i(\lambda)$.
\item[(ii)] $F_{i+1}(\lambda) / F_i(\lambda) \simeq \begin{cases}
\nabla(\lambda) &\tif i=0;
\\
\nabla(\mu_i) ~\textup{for some}~ \mu_i > \lambda &\tif i> 0 .
\end{cases}$
\item[(iii)] For any $\nu \in \Lambda$, $\#\{i\geq 0 ~|~ \mu_i = \nu\} < \infty$. 
\end{itemize}
\end{enumerate}
\end{definition}

\subsection{Standardly Stratified Categories}

For our purposes, it makes more sense to extend the definition of standardly stratified categories to include the original definition of the highest weight category. 
Recall that a quasi-poset (or, a proset) is a set $\Lambda$ having a preorder, i.e., a reflexive and transitive relation $\leq$.
Elements $i, j \in \Lambda$ are equivalent if and only if $i\leq j$ and $j \leq i$. 
Denote by $\overline{i}$ the equivalence class of $i$. 
Thus, $\overline{\Lambda} := \{ \overline{i} ~|~ i\in \Lambda\}$ is a poset whose partial order $\leq$ is given by $\overline{i} \leq \overline{j}$ if and only if $i \leq j$.

\definition\label{def:ssc}
 A {\em standardly stratified category} is a locally Artinian category $\cC$ with enough injectives, satisfying \eqref{eq:GroCond}, and 
 equipped with the following datum: 
\begin{itemize}
\item[(a)] An quasi-poset $\Lambda$ whose corresponding poset $\bar{\Lambda}$ is interval-finite.
\item[(b)] A complete collection $\{L(\lambda)\}_{\lambda\in \Lambda}$ of non-isomorphic simple objects. 
\item[(c)] A collection $\{\nabla(\lambda)\}_{\lambda\in \Lambda}$ of {\em costandard} objects such that
if $[\nabla(\lambda): L(\mu)] \neq 0$, then $\mu \leq \lambda$.
\item[(d)] Each injective hull $I(\lambda)$ of $L(\lambda)$ has a $\nabla$-filtration $(F_i(\lambda))_{i\geq 0}$ such that
\begin{enumerate}[(i)]
\item $I(\lambda) = \bigcup_{i\geq 0} F_i(\lambda)$.
\item $F_{i+1}(\lambda) / F_i(\lambda) \simeq \begin{cases}
\nabla(\lambda) &\tif i=0;
\\
\nabla(\mu_i) ~\textup{for some}~\mu_i \textup{ with } \bar{\mu}_i > \bar{\lambda} &\tif i> 0 .
\end{cases}$
\item For any $\nu \in \Lambda$, $\#\{i\geq 0 ~|~ \mu_i = \nu\} < \infty$. 
\end{enumerate}
\end{itemize}
\enddefinition

The costandard objects in a standardly stratified category have fewer requirements than those in a highest weight category. 
For example, if $[\nabla(\lambda)/L(\lambda): L(\mu)] \neq 0$ in a standardly stratified category, then $\mu$ can still be $ \lambda$.
Moreover, the finiteness conditions (c)(iii) in Definition~\ref{def:hwc} on Hom spaces between costandards and lengths of costandards are dropped.

\subsection{} Originally, Cline, Parshall, and Scott (CPS) severely relaxed the requirements of the highest weight category to introduce standardly stratified categories \cite[Definition~2.2.1]{CPS96}. 
The definitions therein considered only the case when $\Lambda$ is finite (i.e., there are only finitely many simple modules), and used projective covers instead of injective hulls. 
Moreover, our treatment differs from other approaches (see \cite{ADL98, Dl00, Fr07, LW15, BS24}), in that it is not assumed that one is working with a module category for an algebra. 
With the definition in the prior section, we can present a definitive statement. 

\begin{thm} \label{HWCimpliesSS} 
If $\cC$ is a highest weight category (as defined by CPS in Definition~\ref{def:hwc}) then $\cC$ is standardly stratified as defined in 
Definition~\ref{def:ssc}. 
\end{thm} 

\begin{ex} \label{ex:repG}
Let $G$ be a reductive algebraic group over an algebraically closed field of characteristic $p>0$. It is well-known the category of rational $G$-modules, 
$\cC=\text{Mod}(G)$, is a prototype for a highest weight category. The category $\cC$ has enough injective modules, but does not have enough projective modules. 
Furthermore, blocks can have infinitely many simple modules. 

From Theorem~\ref{HWCimpliesSS}, $\cC$ is standardly stratified, however, under older definitions as above, 
$\cC$ would not be viewed as standardly stratified. It is anticipated that Definition~\ref{def:ssc} will be useful in the study of the representation theory for quasi-reductive algebraic supergroups.  
\end{ex} 

\begin{ex} The prototype for a standardly stratified category ${\mathcal C}$ is the category of graded modules, $\cC=\cC_{gr}(A)$ for 
$A$ a finite-dimensional graded algebra over a field $k$ with a triangular decomposition $A\cong A^- \otimes A^0 \otimes A^+$ (as vector spaces). 

It was shown in \cite{HN91} and \cite[Section 4]{BT18}, that $\cC$ is standardly stratified under Definition~\ref{def:ssc}. Again, note that 
$\cC$ can have infinitely many simple modules. 
\end{ex}

\begin{rmk}
From a historical point of view, Cline, Parshall, and Scott (cf. \cite{CPS88}) developed highest weight categories to unify the theory of reductive algebraic group representations in positive characteristic and 
the Category ${\mathcal O}$ for semisimple Lie algebras over the complex numbers. The authors seek to use terminology that will reflect historical accuracy in the subject. 

Highest weight categories encompass the category $A$-fdMod of finite-dimensional modules over 
a quasi-hereditary finite-dimensional algebra $A$. Our definition of standardly stratified category naturally includes the finite-dimensional algebra setting explained in \cite[2.1, 2.2]{CPS96}. 
We anticipate that our categorical definition of standardly stratified should encompass ring theoretic framework in \cite{BS24}. 
The implications can be described by the diagram below:
\[
\begin{tikzcd}
{\begin{array}{c}
	A\textup{-fdMod}
	\\
	A:\textup{quasi-hereditary algebra}
\end{array}}
\ar[Rightarrow, r] 
\ar[Rightarrow, d]
	& 
\textup{Highest weight category}
\ar[Rightarrow, d]
\\
{\begin{array}{c}
	A\textup{-fdMod} 
	\\
	A:\textup{standardly stratified algebra}
\end{array}}
\ar[Rightarrow, r]
\ar[Rightarrow, d]
& 
\textup{Standardly stratified category}
\\
{\begin{array}{c}
	A\textup{-lfdMod} 
	\\
	A:\textup{essentially finite fibered}
	\\
	\textup{quasi-hereditary algebra}
\end{array}}
\ar[Rightarrow, ur, dashed, start anchor=east, end anchor=south]
& 
\end{tikzcd}
\]
Here, {\em essentially finite fibered quasi-hereditary algebras} are defined in \cite[Definitions 5.9, 5.18, Sections 2.2--2.4]{BS24}, 
and $A$-lfdMod is the category of locally finite-dimensional modules over an essentially finite-dimensional locally unital algebra $A$.
Examples of $A$-lfdMod that we know are standardly stratified.

It can be deduced from Example \ref{ex:repG} and from  \cite[Lemma 2.15]{BS24} that the ring theoretic formulations (on the left hand side) 
do not include the categorical formulations on the right hand side of the diagram. 
\end{rmk}


\section{Parabolic Category $\cO$ for Lie Superalgebras}
Let $\fg=\fg_{\0} \oplus \fg_{\1}$ be a finite-dimensional Lie superalgebra over $\CC$.
Often times we will refer to a subalgebra to mean a sub-superalgebra.
By a $\fg$-module we mean a $\ZZ_2$-graded module of its universal enveloping algebra $U(\fg)$.
Unless otherwise stated, by a morphism in any subcategory of the category $\fg$-Mod of $\fg$-modules we mean a graded even (i.e., parity preserving) morphism.
With this convention on the morphisms, the category of $\fg$-modules (with even morphisms) becomes an abelian category.
In this setup, one can then make use of the tools of homological algebra, along with the parity change functor $\Pi$ that interchanges the $\ZZ_2$-grading of a $\fg$-module.

\subsection{The Definition}
Let $\fa\subseteq \fg$ be a Lie sub-superalgebra.
Suppose that $\fg = \fa \oplus \fa'$ as vector spaces for some vector space $\fa'$ which is stable under $\fa$.
\defn\label{def:O}
Let $\cC_{(\fg, \fa)}$ be the full subcategory of $\fg$-Mod such that each object $M \in \cC_{(\fg, \fa)}$ is a direct sum of finite-dimensional $\fa$-modules.
\enddefn

Note that we are not making the assumption that our modules are finitely generated at this point. 
In fact, the (relative) injective modules that we will be working with need not be finitely generated. 


For the special case when $\fg = \fg_\0$ is a Lie algebra with $\fa = \ft_\0$ being its Cartan subalgebra, $C_{(\fg_0, \ft_0)}$ is the category of  weight modules in $\fg_\0$-Mod.

\defn\label{def:O}
Assume that $\fg$ admits a  decomposition $\fg = \fn^+ \oplus \fa \oplus \fn^-$ of Lie sub-superalgebras.
We define a new Category $\cO$ as follows. 
Let $\cO^\fp$ be the full subcategory of $\fg$-Mod such that each module $M \in \cO^\fp$ satisfies the following conditions:
\begin{enumerate}[(O1)]
\item $M$ is finitely generated over $U(\fg)$.
\item $M$ is a direct sum of finite-dimensional $\fa$-modules.
\item $M$ is locally finite with respect to $\fn^+$.
\end{enumerate}
\enddefn

We will see in Section \ref{sec:examples} that several interesting examples of $\cO^\fp$ arise from the notion of a principal parabolic subalgebra $\fp$ introduced by Grantcharov and Yakimov 
(see Section~\ref{sec:pps}). In the case of principal parabolic subalgebras, modules in the Category ${\mathcal O}$ will have weight space decompositions. 

\subsection{Principal Parabolic Subalgebras} \label{sec:pps}
Recall that a subalgebra of $\fg$ is parabolic if it contains a Borel subalgebra.
Following \cite{GY13}, we consider those parabolic subalgebras which are \footnote{The authors thank Mazorchuk for informing us that such parabolic subalgebras appeared first in the work \cite{DMP00} by Dimitrov, Mathieu, and Penkov.}{\em principal}. 
Let $\Phi$ be the set of roots of $\fg$ lying in a Euclidean space $V$.
\definition\label{def:pdcomp}
A subset $P \subseteq \Phi$ is called {\em parabolic} if the following criteria hold:
\begin{enumerate}[(\text{Case} 1)]
\item $\Phi = -\Phi$: $\Phi = P \cup (-P)$, and
$
\alpha, \beta \in P \textup{ implies either } \alpha + \beta \in P \textup{ or } \alpha + \beta \not\in \Phi.
$
\item $\Phi \neq -\Phi$: $\Phi \neq P$, and $P = \widetilde{P} \cap \Phi$ for some parabolic  set $\widetilde{P}$.
\end{enumerate}
Such a parabolic set $P$ affords a Levi decomposition $P = L \sqcup N^+$, where $L := P \cap (-P)$ and $N^+ := P \backslash (-P)$ in the former case when $\Phi = -\Phi$.
For the latter case, $L :=  \widetilde{L} \cap \Phi$ and $N^+ :=  \widetilde{N}^+ \cap \Phi$ for the Levi decomposition $\widetilde{P} = \widetilde{L} \sqcup \widetilde{N}^+$.
Moreover, there is a triangular decomposition $\fg = \fn^- \oplus \fl \oplus \fn^+$, where
\eq\label{def:lvg}
\fn^{+} := \bigoplus_{\alpha \in N^+} \fg_\alpha,
\qquad
\fn^{-} := \bigoplus_{-\alpha \in N^+} \fg_\alpha,
\qquad
\fl := \ft \oplus \Big(\bigoplus_{\alpha \in L} \fg_\alpha\Big).
\endeq
Here $\fg_\alpha$ is the corresponding root space for $\alpha$.
\enddefinition
\definition
For any $\cH \in V^*$, set
\eq
\Phi^-_\cH := \{ \alpha \in \Phi ~|~ \cH(\alpha) < 0\},
\quad
\Phi^0_\cH := \{ \alpha \in \Phi ~|~ \cH(\alpha) = 0\},
\quad
\Phi^+_\cH := \{ \alpha \in \Phi ~|~ \cH(\alpha) > 0\}.
\endeq 
A subset $P \subseteq \Phi$ is called {\em principal} if $P = \Phi^+_\cH \sqcup \Phi^0_\cH$ for some $\cH \in V^*$. Write $P = P_\cH$ in such a case.
\enddefinition
\begin{prop}\label{prop:Ldcomp}
If $P=P_\cH$ is a principal subset of $\Phi$, then $P$ is a parabolic subset.
Moreover,  $L = \Phi^0_\cH$ and $N^+ = \Phi^+_\cH$ gives  a  Levi decomposition of $P$ (which is not necessarily unique). 
\end{prop}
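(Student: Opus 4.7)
\textbf{Proof plan for Proposition~\ref{prop:Ldcomp}.} The plan is to verify the two defining conditions of a parabolic subset (Definition~\ref{def:pdcomp}) separately in the symmetric and asymmetric cases, then read off the Levi decomposition from the $\cH$-level sets. The overall idea is that every statement reduces to the sign-trichotomy $\Phi = \Phi^-_\cH \sqcup \Phi^0_\cH \sqcup \Phi^+_\cH$ induced by the linear functional $\cH$.

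First I would handle Case~1, where $\Phi = -\Phi$. Applying the sign-trichotomy root by root, any $\alpha \in \Phi$ with $\cH(\alpha)\geq 0$ lies in $P$, while any $\alpha$ with $\cH(\alpha)<0$ satisfies $\cH(-\alpha)>0$ with $-\alpha\in\Phi$, hence $\alpha\in -P$. This yields $\Phi = P \cup (-P)$. The additive closure condition is immediate: for $\alpha,\beta\in P$ one has $\cH(\alpha),\cH(\beta)\geq 0$, so $\cH(\alpha+\beta)\geq 0$, and hence $\alpha+\beta\in P$ whenever $\alpha+\beta\in\Phi$. The asserted Levi decomposition then follows from the set-theoretic identities $P \cap (-P) = (\Phi^+_\cH \cup \Phi^0_\cH) \cap (\Phi^-_\cH \cup \Phi^0_\cH) = \Phi^0_\cH$ and $P \setminus (-P) = \Phi^+_\cH$.

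Next I would reduce Case~2 ($\Phi\neq -\Phi$) to Case~1 by passing to the symmetrization $\widetilde{\Phi} := \Phi \cup (-\Phi)$. The hyperplane $\cH$ still separates $\widetilde{\Phi}$, and I set $\widetilde{P} := \widetilde{\Phi}^+_\cH \sqcup \widetilde{\Phi}^0_\cH$, which is principal in $\widetilde{\Phi}$. By Case~1 applied to $\widetilde{\Phi}$, the set $\widetilde{P}$ is parabolic with Levi decomposition $\widetilde{L} = \widetilde{\Phi}^0_\cH$ and $\widetilde{N}^+ = \widetilde{\Phi}^+_\cH$. Intersecting with $\Phi$ gives
\[
\widetilde{P}\cap\Phi = (\widetilde{\Phi}^+_\cH\cap\Phi)\sqcup(\widetilde{\Phi}^0_\cH\cap\Phi) = \Phi^+_\cH\sqcup\Phi^0_\cH = P,
\]
so $P$ is parabolic of type Case~2, with induced Levi pieces $L = \widetilde{L}\cap\Phi = \Phi^0_\cH$ and $N^+ = \widetilde{N}^+\cap\Phi = \Phi^+_\cH$, as asserted. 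The non-uniqueness comment is then explained by the observation that different parabolic supersets $\widetilde{P}'\subseteq\widetilde{\Phi}$ satisfying $\widetilde{P}'\cap\Phi = P$ may carry non-equivalent Levi parts, which upon intersecting with $\Phi$ yield distinct valid decompositions of $P$.

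The one genuinely delicate step is the requirement $P\neq\Phi$ appearing in Case~2: this is a non-degeneracy condition on $\cH$ (asking that $\cH$ take at least one strictly negative value on $\Phi$) and is not automatic from the principal hypothesis alone. I expect to address this by remarking that it is implicit in the intended generic choice of $\cH\in V^*$; otherwise one is in the degenerate situation where $P_\cH = \Phi$, and the proposition is understood to exclude this trivial case. Apart from this bookkeeping point, the proof is a direct verification via the sign-trichotomy.
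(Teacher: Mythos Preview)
Your direct verification is correct and considerably more detailed than the paper's own proof, which simply cites \cite[Proposition~1.6]{GY13} for the simple case and asserts that the same argument carries over to almost simple $\fg$. The sign-trichotomy argument you give for Case~1 and the reduction to Case~1 via the symmetrization $\widetilde{\Phi} = \Phi \cup (-\Phi)$ in Case~2 are exactly the kind of elementary verification one would reconstruct from that reference, so there is no real divergence in method---you have simply unpacked what the paper leaves to a citation.

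Your flag on the condition $P \neq \Phi$ in Case~2 is well taken: as stated, the proposition does not exclude the degenerate situation $\Phi^-_\cH = \varnothing$ (which can genuinely occur when $\Phi \neq -\Phi$, e.g.\ for $\fp(n)$ with a suitably chosen $\cH$), and in that case $P_\cH = \Phi$ fails the Case~2 definition. This is indeed an implicit genericity assumption on $\cH$ that neither the paper nor the cited source makes explicit; your proposed resolution---to exclude this trivial case by convention---is the right reading.
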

\proof
The proof for the case $\fg$ is simple is given in \cite[Proposition~1.6]{GY13}. In general, 
for almost simple $\fg$, the preceding argument still goes through.
\endproof
Combining Proposition~\ref{prop:Ldcomp} and \eqref{def:lvg}, any principal parabolic subset $P = P_\cH$ determines a triangular decomposition
\eq\label{eq:tridcomp}
\fg = \fn^-_\cH \sqcup \fl_\cH \sqcup \fn^+_\cH,
\quad 
\textup{where}
\quad
\fn^{\pm}_\cH := \bigoplus_{\alpha \in  \pm\Phi^+_\cH} \fg_\alpha,
\quad
\fl_\cH := \ft \oplus \bigoplus_{\alpha \in \Phi^0_\cH} \fg_\alpha.
\endeq

\subsection{Important facts about ${\mathcal O}^{{\mathfrak p}_{\0}}$} 
In this section, we collect some useful facts about the parabolic Category ${\mathcal O}^{{\mathfrak p}_{\0}}$ associated with a complex semisimple Lie algebra $\mathfrak{g}_{\0}$ and a parabolic subalgebra ${\mathfrak p}_{\0}\subseteq \mathfrak{g}_{\0}$, see~\cite{Hu08,Ku02}, which we will need in later sections. 

The category ${\mathcal O}^{{\mathfrak p}_{\0}}$ decomposes into direct summands ${\mathcal O}^{{\mathfrak p}_{\0}}_{\lambda}$ such that
$
{\mathcal O}^{{\mathfrak p}_{\0}}=\mathcal \bigoplus_{\lambda} {\mathcal O}^{{\mathfrak p}_{\0}}_{\lambda}
$,
where $\lambda$ runs over a complete set of representatives for the orbits under the dot action, 
and ${\mathcal O}^{{\mathfrak p}_{\0}}_{\mu}$ is the summand generated by the simple modules $L_\0(\lambda)$ with $\lambda\in W_{{\mathfrak p}_{\0}}\cdot \mu$. Let $Z_\0(\lambda)$ be the corresponding Verma module.

\begin{prop}\label{Cat O-Liealg prop}Let ${\mathcal O}^{{\mathfrak p}_{\0}}$ be the parabolic Category ${\mathcal O}$ associated with a complex semisimple Lie algebra $\mathfrak{g}_{\0}$. 
\begin{itemize} 
\item [(a)] Every $M\in {\mathcal O}^{{\mathfrak p}_{\0}}$ has finite length.
\item [(b)] The subcategory ${\mathcal O}^{{\mathfrak p}_{\0}}_{\mu}$ contains only finitely many simples, Vermas, and indecomposable projectives.
\item [(c)] The composition multiplicities $[Z_{\0}(\gamma): L_{\0}(\tau)]$ of simple modules in Verma modules are universally bounded in ${\mathcal O}^{{\mathfrak p}_{\0}}$.
\end{itemize} 
\end{prop}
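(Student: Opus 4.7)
The plan is to reduce all three statements to classical results on the parabolic BGG Category $\cO$ for complex semisimple Lie algebras, available in \cite{Hu08, Ku02}. Since $\cO^{\fp_\0}$ sits as a full subcategory of the usual Category $\cO = \cO^{\fb_\0}$, closed under subobjects and quotients, composition length and composition multiplicities transfer between the two settings. The proof therefore becomes a targeted assembly of standard results: parts (a) and (b) are essentially bookkeeping, while (c) requires the substantive input of Kazhdan--Lusztig theory.

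For (a), I would invoke the finite-length theorem of Humphreys for $\cO$. The key ingredients are: every $M \in \cO$ is finitely generated, hence a quotient of some $U(\fg_\0)\otimes_{U(\fb_\0)} E$ for a finite-dimensional $\fb_\0$-module $E$; this induced module carries a finite Verma flag; and the linkage principle forces the highest weights of composition factors of $M$ to lie in a finite union of dot-orbits of weights supporting $M$. Finite-dimensionality of weight spaces then produces a global length bound for $M$ as an object of $\cO$, and hence as an object of $\cO^{\fp_\0}$ since any composition series in $\cO^{\fp_\0}$ refines to one in $\cO$.

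For (b), the block decomposition via central characters identifies $\cO^{\fp_\0}_\mu$ with weights in the dot-orbit $W\cdot \mu$: the simples of $\cO^{\fp_\0}_\mu$ are labeled by the subset of $W\cdot \mu$ consisting of $\fp_\0$-dominant integral weights. Since $W\cdot \mu$ is already finite, the set of simples is finite, and so is the set of parabolic Verma modules (indexed by the same set). Finiteness of the indecomposable projectives follows from the standard bijection $P(\lambda) \leftrightarrow L(\lambda)$ between projective covers and simple objects within a block.

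For (c), the plan is to invoke Kazhdan--Lusztig theory, as established by Beilinson--Bernstein and Brylinski--Kashiwara: on a regular integral block, $[Z_\0(\gamma):L_\0(\tau)]$ equals a Kazhdan--Lusztig polynomial $P_{y,w}(1)$ indexed by a pair $(y,w) \in W\times W$, hence bounded by $\max_{y,w} P_{y,w}(1)$, a constant depending only on $W$. Singular integral blocks reduce to the regular case via translation functors, which do not increase these multiplicities. Non-integral blocks are equivalent to regular integral blocks for the integral Weyl group $W_{[\gamma]}$, a reflection subgroup of $W$; up to conjugacy there are only finitely many such subgroups, so taking the maximum of the corresponding regular-case bounds yields a single universal constant valid throughout $\cO^{\fp_\0}$. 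The main obstacle is uniformly coordinating the regular, singular, and non-integral cases into one bound, but this is handled by the standard translation and subsystem equivalences and is the only nontrivial content of the proposition.
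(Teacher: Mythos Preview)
Your proposal is correct. Parts (a) and (b) match the paper's treatment: the paper simply calls them ``standard facts,'' while you spell out the underlying references, which is fine.

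For part (c), your route differs from the paper's. The paper argues categorically: within a fixed block there are only finitely many Vermas and simples (by (b)), each multiplicity is finite (by (a)), and since there are only finitely many blocks up to Morita equivalence, taking the maximum over representatives gives a universal bound. You instead argue by explicit formula: the multiplicities are values $P_{y,w}(1)$ of Kazhdan--Lusztig polynomials, and you handle the singular and non-integral cases by translation functors and integral-Weyl-group reduction, concluding that only finitely many KL values can occur. Both arguments are valid and ultimately rest on the same structural inputs (translation equivalences, integral subsystem reduction), but the paper packages these into the single fact ``finitely many Morita classes of blocks'' and never needs to know what the multiplicities actually are, whereas your approach invokes the full Kazhdan--Lusztig theorem to name them. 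The paper's version is shorter and avoids the BB/BK machinery; yours is more explicit and gives, in principle, a computable bound.
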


\begin{proof} Part (a) and (b) are standard facts. For part (c), let $\gamma\in\Lambda_{\0}$. Then we have $[Z_{\0}(\gamma): L_{\0}(\tau)]\neq 0$ for only finitely many $\tau\in\Lambda_{\0}$ because every $\tau\in\Lambda_{\0}$ for which $L_{\0}(\tau)$ appears as a composition factor of $Z_{\0}(\gamma)$ must be in the finite $W_{{\mathfrak p}_{\0}}$-orbit of $\gamma$ under the dot-action. Since ${\mathcal O}^{{\mathfrak p}_{\0}}$ has finite length, we also have $[Z_{\0}(\gamma): L_{\0}(\tau)]<\infty$. Since there are finitely many blocks (up to Morita equivalence), we can take the maximum of these non-zero numbers to get a bound.
\end{proof} 

The cohomology for Category ${\mathcal O}^{{\mathfrak p}_{\0}}$ can be computed using the relative cohomology for the pair $(\mathfrak{g}_{\0},\mathfrak{l}_{\0})$ where 
$\mathfrak{l}_{\0}$ is the Levi subalgebra of $\mathfrak{p}_{\0}$. 

\begin{prop}
For $M,N\in{\mathcal O}^{{\mathfrak p}_{\0}}$, $\mathrm{Ext}^{\bullet}_{{\mathcal O}^{{\mathfrak p}_{\0}}}(M,N) \cong \mathrm{Ext}^{\bullet}_{(\mathfrak{g}_{\0},\mathfrak{l}_{\0})}(M,N)$. 
\end{prop}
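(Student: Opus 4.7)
The plan is to compute both sides using a common complex. For the right-hand side, the relative Chevalley--Eilenberg cochain complex
\[
C^\bullet(\fg_{\0}, \fl_{\0}; \Hom_\CC(M, N)) := \Hom_{\fl_{\0}}\bigl(\Lambda^\bullet(\fg_{\0}/\fl_{\0}),\ \Hom_\CC(M, N)\bigr)
\]
computes $\Ext^\bullet_{(\fg_{\0}, \fl_{\0})}(M, N)$, as it arises by applying $\Hom_{\fg_{\0}}(-, N)$ to the relatively $(\fg_{\0}, \fl_{\0})$-projective Koszul resolution of $M$. For the left-hand side, I would take any injective resolution $N \hookrightarrow I^\bullet$ of $N$ inside $\cO^{\fp_{\0}}$, which exists as $\cO^{\fp_{\0}}$ has enough injectives.

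The two computations are bridged by the intermediate category $(\fg_{\0}, \fl_{\0})\textup{-Mod}$ of $\fl_{\0}$-semisimple $\fg_{\0}$-modules. Reductivity of $\fl_{\0}$ guarantees that every short exact sequence in this Grothendieck subcategory is automatically $\fl_{\0}$-split, so the relative Ext coincides with the ordinary Ext computed in $(\fg_{\0}, \fl_{\0})\textup{-Mod}$. The exact inclusion $\cO^{\fp_{\0}} \hookrightarrow (\fg_{\0}, \fl_{\0})\textup{-Mod}$---valid because every object of $\cO^{\fp_{\0}}$ is already $\fl_{\0}$-semisimple---then induces a natural comparison map
\[
\alpha\colon \Ext^\bullet_{\cO^{\fp_{\0}}}(M, N) \longrightarrow \Ext^\bullet_{(\fg_{\0}, \fl_{\0})}(M, N),
\]
which is manifestly an isomorphism in degree zero. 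For $\Ext^1$, any ambient extension of two objects of $\cO^{\fp_{\0}}$ again satisfies the defining conditions of $\cO^{\fp_{\0}}$ (using the finite length in Proposition~\ref{Cat O-Liealg prop}(a) and the closure of local $\fn_{\0}^+$-finiteness under extensions), so $\alpha^1$ is bijective. Higher degrees would then follow by dimension shifting along the injective resolution $I^\bullet$.

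The main obstacle is the dimension shifting step, which demands that each $I^k$ be acyclic for $\Ext^\bullet_{(\fg_{\0}, \fl_{\0})}(M, -)$. Since injective hulls in $\cO^{\fp_{\0}}$ admit dual parabolic-Verma ($\nabla$-) filtrations, this reduces to showing $\Ext^i_{(\fg_{\0}, \fl_{\0})}(M, \nabla(\mu)) = 0$ for $i > 0$ and all $M \in \cO^{\fp_{\0}}$. I would prove this by combining BGG duality $\nabla(\mu) \cong \Delta(\mu)^\vee$, Frobenius reciprocity for the exact induction $U(\fg_{\0}) \otimes_{U(\fp_{\0})}(-)$ (exactness by PBW), and the Hochschild--Serre spectral sequence for $\fp_{\0} = \fl_{\0} \ltimes \fn_{\0}^+$, which together reduce the claim to a vanishing statement for $\fn_{\0}^+$-cohomology of $M^\vee$ that is controlled by the local $\fn_{\0}^+$-finiteness built into the definition of $\cO^{\fp_{\0}}$ and Kostant-type results.
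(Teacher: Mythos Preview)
The paper does not supply its own proof of this proposition: it is recorded as a known background fact about parabolic Category $\cO$ for reductive Lie algebras, with references in Section~\ref{sec:Oss} to \cite{De80}, \cite[Exercise~3.3.E(1)]{Ku02}, and \cite{CM15}. (The paper's Theorem~\ref{T:cohomology} does generalize it to the super setting, but the proof of that theorem, via Proposition~\ref{prop:RjF0}, explicitly \emph{uses} the Lie-algebra case as an input, so specializing Theorem~\ref{T:cohomology} back to $\fg=\fg_{\0}$ would be circular.)

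Your overall strategy --- compare Ext in $\cO^{\fp_{\0}}$ with relative Ext through the intermediate category of $\fl_{\0}$-semisimple modules, verify degrees $0$ and $1$ by hand, then dimension-shift along an injective resolution in $\cO^{\fp_{\0}}$ --- is the correct shape and is essentially the ``extension full'' approach of \cite{CM15}. The genuine gap is your reduction step. The asserted vanishing $\Ext^i_{(\fg_{\0},\fl_{\0})}(M,\nabla(\mu))=0$ for all $i>0$ and all $M\in\cO^{\fp_{\0}}$ is \emph{false}. Already for $\fg_{\0}=\fsl_2$ with $\fp_{\0}=\fb_{\0}$ one has $\nabla(-2)=L(-2)$ and, from the nonsplit Verma $\Delta(0)$, $\Ext^1_{\cO}(L(0),L(-2))\cong\CC$; granting the proposition, this equals $\Ext^1_{(\fsl_2,\fh)}(L(0),\nabla(-2))\neq 0$. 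More conceptually, $\Ext^i_{\cO^{\fp_{\0}}}(L(\lambda),\nabla(\mu))$ carries Kazhdan--Lusztig information and is generically nonzero, so costandards cannot be relatively acyclic against arbitrary $M$. What your dimension shift actually requires is acyclicity of the \emph{injectives} $I^k$ themselves, and this does not reduce to their $\nabla$-sections: a filtration only transports vanishing \emph{from} sections \emph{to} the total object, not the other way, and here the sections do not vanish. The route the paper takes (and that underlies \cite{CM15}) is dual: pass through the Grothendieck spectral sequence for the truncation functor $\cF$ of \eqref{def:F} and show $R^j\cF(N)=0$ for $N\in\cO^{\fp_{\0}}$, which amounts to proving that the \emph{projectives} $P_{\0}(\sigma)$ are acyclic for $\Ext^{>0}_{(\fg_{\0},\fl_{\0})}(-,N)$ --- a statement that, like your injective version, does not follow from the standard filtration alone and is where the real content lies.
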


Let $\ell(v)$ denote the length of an element $v\in W_{{\mathfrak p}_{\0}}$. Moreover, let $\leq$ be the Bruhat order on $W_{{\mathfrak p}_{\0}}$. The next result shows that 
there is parity vanishing for the Ext-groups between Verma and simple modules. 

\begin{prop}
For $v,w\in W_{{\mathfrak p}_{\0}}$, $v\leq w$, we have 
\[
P_{v,w}(t)=\sum\nolimits_{i\geq 0} t^i\dim \mathrm{Ext}^{\ell(w)-\ell(v)-2i}_{{\mathcal O}^{{\mathfrak p}_{\0}}}(Z_{\0}(v\cdot 0),L_{\0}(w\cdot 0)),
\]
where $P_{v,w}(t)$ is the Kazhdan--Lusztig polynomial associated with $v,w$.
\end{prop}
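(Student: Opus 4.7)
The plan is to deduce the formula from Vogan's parity conjecture (a consequence of the Kazhdan--Lusztig conjecture), combined with the parity vanishing that follows from the Koszulity of blocks of $\cO^{\fp_\0}$. I would treat this essentially as a citation-based result, but organized into three logical reductions that make clear what ingredients are being invoked.

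First, I would recall the Kazhdan--Lusztig theorem in the regular block of ordinary Category $\cO$ for $\fg_\0$, established by Beilinson--Bernstein and Brylinski--Kashiwara, which identifies $P_{v,w}(1)$ with composition multiplicities in Verma modules. Vogan's graded refinement of this statement asserts that the coefficient of $t^i$ in $P_{v,w}(t)$ equals $\dim \Ext^{\ell(w)-\ell(v)-2i}_\cO(Z_\0(v\cdot 0), L_\0(w\cdot 0))$. The geometric meaning is that $\Ext^\bullet$ groups between Vermas and simples compute the stalk cohomology of IC sheaves on Schubert varieties in $G_\0/B_\0$ via Beilinson--Bernstein localization, and the defining formula of $P_{v,w}(t)$ is precisely the Poincar\'e polynomial of such stalk cohomology.

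Second, I would transport this statement to the parabolic setting $\cO^{\fp_\0}$. The two standard routes are (a) Beilinson--Ginzburg--Soergel Koszul self-duality of parabolic blocks of $\cO$, which supplies a canonical Koszul grading and directly identifies graded Poincar\'e polynomials of $\Ext$ with Kazhdan--Lusztig polynomials; or (b) localization on the partial flag variety $G_\0/P_\0$, under which parabolic Vermas correspond to standard $\D$-modules and simples to IC sheaves, so that the desired $\Ext^\bullet_{\cO^{\fp_\0}}$ is again computed as IC stalk cohomology on Schubert varieties in $G_\0/P_\0$. Either route yields the parity vanishing $\Ext^n_{\cO^{\fp_\0}}(Z_\0(v\cdot 0), L_\0(w\cdot 0)) = 0$ whenever $n \not\equiv \ell(w)-\ell(v) \pmod 2$, a reflection of purity of IC stalks.

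Third, assembling these ingredients, I set $n=\ell(w)-\ell(v)-2i$ in the graded Ext identification, use parity vanishing to observe that all other degrees contribute zero, and sum over $i \geq 0$ to extract the generating-function identity in the statement. The main obstacle to write out cleanly is the precise compatibility between the ordinary Kazhdan--Lusztig polynomials $P_{v,w}(t)$ and the $\Ext$-polynomials in the parabolic setting: for general parabolics, one a priori expects parabolic Kazhdan--Lusztig polynomials in the sense of Deodhar to appear, and the formula as stated implicitly requires that $v,w$ be indexed by an appropriate set of coset representatives adapted to $\fp_\0$ so that the two families of polynomials agree. Once this indexing is fixed, the remainder of the argument is a direct coefficient comparison.
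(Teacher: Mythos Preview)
The paper does not supply a proof of this proposition: it appears in a subsection titled ``Important facts about ${\mathcal O}^{{\mathfrak p}_{\0}}$'' where the authors collect known results from the literature on parabolic Category $\cO$ for semisimple Lie algebras, with a blanket citation to \cite{Hu08,Ku02}. So there is nothing to compare your argument against beyond the implicit citation.

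Your proposal is entirely appropriate in this light: you correctly identify the result as a consequence of the Kazhdan--Lusztig conjectures (Beilinson--Bernstein, Brylinski--Kashiwara) in Vogan's graded form, together with parity vanishing coming from Koszulity or purity of IC stalks. This is exactly the body of results the paper is invoking by citation. Your third paragraph also raises a legitimate point the paper glosses over: as literally stated, with $v,w \in W_{\fp_\0}$ and ordinary Kazhdan--Lusztig polynomials $P_{v,w}$, the formula only makes sense once one fixes the correct coset representatives (minimal-length representatives for $W_{\fl_\0}\backslash W$) and identifies the parabolic Kazhdan--Lusztig polynomials with the appropriate ordinary ones. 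The paper's notation $W_{\fp_\0}$ is ambiguous on this score, and your caveat is well taken.
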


\section{Examples}\label{sec:examples}

A summary of important finite-dimensional Lie superalgebras can be found in Appendix \ref{S:QS}.
For these Lie superalgebras, the study of (parabolic) Category $\cO$ was initiated by Brundan (cf. \cite{Br03} for $\fgl(m|n)$ and \cite{Br04} for $\fq(n)$).
The parabolic Category $\cO$ for $\fgl(m|n)$ and $\fosp(m|n)$ are then used to establish the super duality by Cheng, Lam, and Wang in \cite{CLW12, CLW15}. 
Later on, parabolic Category $\cO$ for quasi-reductive Lie superalgebras has been studied by Mazorchuk, Chen, Cheng, Coulembier, and Wang \cite{Maz14, CW19, CW22, CCC21}.
The categories for Cartan series were studied by Duan, Shu, and Yao \cite{DSY24} based on work of Serganova \cite{Se05}.

\subsection{Category $\cO$ for Semisimple Lie Algebras} \label{sec:Oss}
Suppose that $\fg = \fg_\0$ is a semisimple Lie algebra.
Let $\fp = \fp_\0$ be a parabolic subalgebra with Levi subalgebra $\fa = \fa_\0$.
Then, our category  $\cO^\fp = \cO^{\fp_\0}$ is the usual parabolic Category $\cO$, which is a highest weight category (see \cite[Section 9]{Hu08}).

We remark that our axiom (O2) is a direct generalization of \cite[$(\cO^\fp2)$]{Hu08} in terms of direct sums of finite-dimensional modules over a subalgebra.
The existence of weight space decompositions is a consequence of (O2).

In this context, Theorem~\ref{T:cohomology} translates into the isomorphisms $\Ext_{\cO^\fp}^i(M, N) \cong \Ext^i_{(\fg, \fa)}(M,N)$ for all $i \geq 0$, and $M, N \in \cO^\fp$.
The isomorphisms for the special case where $\fa = \fh$ are well-known. 
It was stated in \cite{De80} (without proof),
and has appeared as an exercise in Kumar's book \cite[Exercise 3.3.E(1)]{Ku02}. 
A proof for this special case can be found in \cite[Theorem 9]{CM15}, in which the statement is equivalent to that the inclusion functor $\cO \hookrightarrow \mathcal{C}_{(\fg, \fa)}$ is {\em extension full}. 

Since $\Ext^i_{\cO^\fp}(M,N) = 0$ for sufficiently large $i$, Theorem~\ref{T:finite generation} holds trivially. 
Theorem~\ref{T:stratification} holds since a highest weight category is standardly stratified (with our definition). 
The inequality in Theorem~\ref{T:complexity} holds since the complexity is zero for any module in $\cO^\fp$.
\subsection{Lie Superalgebras of Type I} \label{sec:TypeI}
An important category of $\fg$-modules is the category $\cF$ of finite-dimensional $\fg$-modules which are completely reducible over $\fg_\0$. The category $\cF$  is in general not semisimple.
The category  $\cF$ is a highest weight category for each Lie superalgebra $\fg$ of Type I, 
and the proof is given separately, due to Brundan \cite{Br03} for $\fgl(m|n)$, to Cheng, Wang, and Zhang \cite{CWZ07} for $\fosp(2|2n)$, and to Chen \cite{C15} for $\fp(n)$.

In particular, a Type I Lie superalgebra $\fg$ admits a ${\mathbb Z}$-grading and the following decomposition:
\eq
\fg = \fn^+ \oplus \fg_\0 \oplus \fn^-,
\quad
\textup{where}
\quad
\fn^+ := {\mathfrak g}_{1}
\quad
\fn^- := {\mathfrak g}_{-1},
\endeq
Then, the category $\cF$ identifies with our Category $\cO^\fp$ by setting $\fa = \fg_\0$ and $\fp = \fg_\0 \oplus \fn^+$.  The standard (resp. costandard) modules are the Kac modules (resp. dual Kac modules). 

In this context, Theorem~\ref{T:stratification} is compatible with the fact that $\cF$ is a highest weight category,
while Theorems~\ref{T:cohomology}, \ref{T:finite generation} and \ref{T:complexity} are proved by Boe, Kujawa and Nakano in \cite[Corollary 2.4.2, Theorem 2.5.1]{BKN10a}, \cite[Theorems 2.5.2--3]{BKN10a}, and in  \cite[Theorem 2.5.1]{BKN10b}.

\subsection{Quasi-reductive Lie Superalgebras} 
Suppose that $\fg$ is a quasi-reductive (not necessarily simple) Lie superalgebra.
Mazorchuk's parabolic Category $\cO$ \cite{Maz14}  is an example of our Category $\cO^\fp$, where $\fp$ is a {principal parabolic subalgebra} as defined by Grantcharov and Yakimov \cite{GY13}.
In particular, such a Category $\cO^\fp$ recovers  Brundan's Category $\cO_{m|n}$  \cite{Br03} for $\fgl(m|n)$ and Category $\cO_n$ \cite{Br04} for $\fq(n)$, as well as the parabolic categories over $\fgl(m|n)$ and over $\fosp(m|n)$ which appeared in the super duality \cite{CLW15} established by Cheng, Lam and Wang.

While the irreducible characters associated with these categories are known, their extension structures are less understood, compared to the categories mentioned in Sections \ref{sec:Oss}--\ref{sec:TypeI}. 
To our best knowledge, the categorical extensions of $\cO^\fp$ has not been identified with relative cohomology,
and we believe that our Theorems \ref{T:cohomology}, \ref{T:finite generation} and \ref{T:complexity} are new in this context.

For Theorem~\ref{T:stratification}, only a special case in which $\cO^\fp$ is a highest weight category is known.
To be precise, a sub-collection of Mazorchuk's parabolic categories is later studied by Chen, Cheng and Coulembier, 
under a technical assumption that $\textup{Par}(\fg, \fp_\0) \neq \varnothing$, where $\textup{Par}(\fg, \fp_\0)$ is the set of reduced parabolic subalgebras $\fa \subseteq \fg$ such that $\fa_\0 = \fp_\0$  (see  \cite[Sections 1.4 and 3.1]{CCC21}).
In particular, their assumption excludes $\fq(n)$ from their discussion, 
since the assumption implies that the principal parabolic subalgebra $\fp$ must have at most one purely even Levi subalgebra, and hence the Cartan subalgebra must be purely even.
However, the Cartan subalgebra of $\fq(n)$ is not purely even.
Under this assumption, a highest weight category result \cite[Theorem 3.1(iii)]{ CCC21} is obtained.

In general, we expect that in most cases, Category $\cO^\fp$ for Lie superalgebras will not be a highest weight category.
\subsection{Lie Superalgebras affording Bott-Borel-Weil Parabolics}
Recall the detecting subalgebras $\ff$ constructed in \cite[Section 8.9]{BKN10a}.
Suppose that $\fg$ is a Lie superalgebra that affords Bott-Borel-Weil (BBW) parabolic subalgebras \cite[Definition 4.11.1]{GGNW21} introduced by D. Grantcharov, N. Grantcharov, Nakano, and Wu.
Let $G$ be the corresponding supergroup scheme with $\mathop{\textup{Lie}}G = \fg$, and $P$ be the parabolic subgroup corresponding to a BBW parabolic subalgebra $\fp \subseteq \fg$.
Note that  $\fp$ is automatically a principal parabolic subalgebra of $\fg$, and is used to obtain a Bott-Borel-Weil result in the sense that the Poincar\'e polynomial 
\eq
p_{G,P}(t) := \sum\nolimits_{j=0}^\infty t^j \dim R^j\textup{ind}_P^G(\CC)
\endeq
 identifies with a Poincar\'e polynomial for a certain finite reflection group, specialized to a power of $t$. 
This result particularly gives information about the cohomology of the trivial line bundle $\mathcal{H}^{\bullet}(G/P, \mathcal{L}(0)) = R^{\bullet}\textup{ind}_P^G(\CC)$.

In this context, there is a triangular decomposition for $\fg$, given by
setting $\fa = \ff$ and setting $\fp$ to be the corresponding BBW parabolic subalgebra. 

In particular, for $\fq(n)$, the detecting subalgebra $\ff \subseteq \fq(n)$, by construction, can be described as 
\eq
\ff_\0 = \{  \left(\begin{smallmatrix} A&0\\0&A\end{smallmatrix}\right) ~|~ A \in \fgl_n \textup{ is diagonal}\},
\quad
\ff_\1 = \{  \left(\begin{smallmatrix} 0&B\\B&0\end{smallmatrix}\right) ~|~ B \in \fgl_n \textup{ is diagonal}\}.
\endeq
Moreover, the BBW parabolic subalgebra $\fp$ is constructed by explicitly choosing a separating hyperplane $\cH$ such that $\Phi^0_\cH = \varnothing$, and that $\fp = \ff \oplus {\mathfrak n}^{+}$ agrees with the positive Borel subalgebra $\fb \subseteq \fq(n)$.
Hence, Brundan's Category $\cO_n$ can be identified with our Category $\cO^\fp$ in  this setup.
 
We anticipate that, among all categories $\cO^\fp$ with respect to an arbitrary principal parabolic subalgebra of a quasi-reductive Lie superalgebra, these categories $\cO^\fp$ associated with the BBW parabolic subalgebras  are the 
ones that most reflect the general superalgebra theory, in light of the good  cohomological properties provided in \cite{GGNW21}.
%
%
%
%
%
%

\section{Relative Lie Superalgebra Cohomology}  

\subsection{Relative cohomology for rings} 

For rings there is a notion of relative cohomology that can be extended to the situation for superalgebras. Let $R$ be a superalgebra over a field $k$ and $S$ a subsuperalgebra.  
One needs the notion of $(R,S)$-exact sequences and $(R,S)$-projectivity. 

\begin{definition} Let $R$ be a superalgebra and $S$ be a subalgebra of $R$. 
\begin{itemize} 
\item [(a)] Consider the sequence of $R$-supermodules and even $R$-supermodule 
homomorphisms:  
\[
\dotsb \to M_{i-1} \xrightarrow{f_{i-1}} M_{i} \xrightarrow{f_{i}}  M_{i+1} \to \dotsb 
\]
We say this sequence is \emph{$(R,S)$-exact} if it is exact as a 
sequence of $R$-supermodules and if, when viewed as a sequence of $S$-supermodules,  
$\text{Ker } f_{i}$ is a direct summand of $M_{i}$ for all $i.$  
\item[(b)] An $R$-supermodule $P$ is \emph{$(R,S)$-projective} if given any $(R,S)$-exact sequence 
\[
0 \to M_{1} \xrightarrow{f} M_{2} \xrightarrow{g}  M_{3} \to 0,
\] 
and $R$-supermodule homomorphism $h: P \to M_{3}$ there is an $R$-supermodule map $\tilde{h}:P \to M_{2}$ satisfying 
$g \circ \tilde{h}=h.$  
\item[(c)] An $(R,S)$-projective resolution of an $R$-supermodule $M$ is an $(R,S)$-exact sequence 
\[
\dotsb \xrightarrow{\delta_{2}} P_{1} \xrightarrow{\delta_{1}} P_{0} \xrightarrow{\delta_{0}} M \to 0,
\]
where each $P_{i}$ is an $(R,S)$-projective supermodule.
\end{itemize} 
\end{definition} 

An equivalent definition of an $R$-module, $M$, being $(R,S)$-projective is that $M$ is a direct summand of 
$R\otimes_{S} N$ for some $S$-module $N$. From this it is clear that any projective $R$-module is $(R,S)$-projective. 
Furthermore, any $R$-supermodule $M$ admits an $(R,S)$-projective resolution. We can define cohomology as follows: let 
$P_{\bullet} \to M$ be a $(R,S)$-projective resolution and let 
\eq
\Ext^{i}_{(R,S)}(M,N)={\opH}^{i}(\Hom_{R}(P_{\bullet}, N)).
\endeq
It is also natural to define the notion of $(R,S)$-injective modules and $(R,S)$-injective resolutions, and one can define 
$\text{Ext}_{(R,S)}^{\bullet}(M,N)$ using an injective $(R,S)$-injective resolution for $N$. 

In the many cases where $R=U(\fg)$ and $S=U(\fh)$ and $\fh$ is a Lie subsuperalgebra of $\fg$, the 
$(R,S)$-cohomology has a nice concrete interpretation via relative Lie superalgebra cohomology. Proofs can be found in \cite[Section 3.1]{Ku02} that can be generalized to the super setting. 

\begin{theorem}\label{T:relatescohom2} Let $\fg$ be a Lie superalgebra, $\fh$ a Lie subsuperalgebra, and 
$M,N$ $\fg$-supermodules.  Assume that either (i) $\fg$ is finitely semisimple as a $\fh$-supermodule under 
the adjoint action or (ii) $\fg=\fh \oplus {\mathfrak q}$ where ${\mathfrak q}$ is a Lie subsuperalgebra of $\fg$. Then
\begin{equation*}
\Ext^{\bullet}_{({U}(\fg),{U}(\fh))}(M, N) \cong {\opH}^{\bullet}(\fg, \fh, M^{*} \otimes N).
\end{equation*}
\end{theorem}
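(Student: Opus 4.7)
The plan is to mimic the classical proof from Kumar's book \cite[Section 3.1]{Ku02}, inserting super signs carefully. The key tool is a concrete $(U(\fg), U(\fh))$-projective resolution of the trivial module $\CC$ built from a super Koszul (Chevalley--Eilenberg) complex. Under hypothesis (i) or (ii) the quotient $\fg/\fh$ inherits the structure of an $\fh$-supermodule: in case (i), finite semisimplicity of $\fg$ over $\fh$ gives an $\fh$-stable complement of $\fh$ in $\fg$, while in case (ii) the subalgebra complement $\fq$ already plays this role. Let $\Lambda_s^\bullet(\fg/\fh) := \Lambda^\bullet((\fg/\fh)_\0) \otimes S^\bullet((\fg/\fh)_\1)$ denote the super exterior algebra, equipped with the standard super Chevalley--Eilenberg differential. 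The first main step is to show that
\[
\cdots \to U(\fg) \otimes_{U(\fh)} \Lambda_s^n(\fg/\fh) \to \cdots \to U(\fg) \otimes_{U(\fh)} \Lambda_s^0(\fg/\fh) \to \CC \to 0
\]
is an $(U(\fg), U(\fh))$-projective resolution. Each term is $(U(\fg), U(\fh))$-projective by construction, being induced from an $\fh$-module. For exactness I would invoke the super PBW theorem to produce a $U(\fh)$-equivariant vector space isomorphism $U(\fg) \cong U(\fh) \otimes \Lambda_s^\bullet(\fg/\fh)$, from which a contracting $\fh$-homotopy is built; in particular the complex is $(U(\fg), U(\fh))$-exact in the sense of the preceding definition.

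To compute $\Ext^\bullet_{(U(\fg), U(\fh))}(M, N)$ I would tensor this resolution by $M$, appealing to the super tensor identity
\[
M \otimes \bigl(U(\fg) \otimes_{U(\fh)} X\bigr) \cong U(\fg) \otimes_{U(\fh)} \bigl(M|_\fh \otimes X\bigr),
\]
which shows that the tensored complex is again $(U(\fg), U(\fh))$-projective and forms a resolution of $M$. Applying $\Hom_{U(\fg)}(-, N)$ and using Frobenius reciprocity produces the complex
\[
C^n \cong \Hom_{U(\fh)}\bigl(\Lambda_s^n(\fg/\fh) \otimes M,\, N\bigr) \cong \Hom_{U(\fh)}\bigl(\Lambda_s^n(\fg/\fh),\, M^* \otimes N\bigr),
\]
which is, up to the identification of differentials, exactly the Chevalley--Eilenberg cochain complex defining the relative Lie superalgebra cohomology $\opH^\bullet(\fg, \fh; M^* \otimes N)$. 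Verifying that the two differentials coincide under this identification closes the argument.

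The principal obstacle I anticipate is careful bookkeeping of Koszul signs when translating the Chevalley--Eilenberg differential, the super tensor identity, and Frobenius reciprocity into the $\ZZ_2$-graded setting; one must check that super sign conventions are consistent on both sides of each isomorphism. A secondary technical point is the interpretation of $M^*$ when $M$ is infinite-dimensional: the cochain complex should really be written with coefficients in $\Hom_\CC(M, N)$ under the diagonal $\fg$-action, and this literally equals $M^* \otimes N$ only when $M$ is finite-dimensional, but the complex (and hence the statement) is unambiguous either way.
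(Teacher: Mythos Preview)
Your proposal is correct and takes exactly the approach the paper indicates: the paper does not give an explicit proof but simply refers to \cite[Section 3.1]{Ku02} and asserts that the argument there generalizes to the super setting, which is precisely the strategy you outline. Your sketch in fact supplies more detail than the paper does, including the correct observation that $M^{*}\otimes N$ should be read as $\Hom_{\CC}(M,N)$ when $M$ is infinite-dimensional.
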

From this point on, we write $\Ext^{\bullet}_{(\fg,\fh)}(M, N)$ to denote $\Ext^{\bullet}_{({U}(\fg),{U}(\fh))}(M, N)$, under the conditions of the theorem.

\subsection{} The following proposition is fairly standard for Lie algebra cohomology. For relative cohomology and arbitrary modules, the authors have not found this in the 
literature. For the reader's convenience, a sketch of a proof is provided. 

\begin{prop} \label{P:ExtC} Assume the conditions of Theorem~\ref{T:relatescohom2}. For any $\fg$-modules $M$ and $N$,
\[
\Ext^\bullet_{(\fg, \fh)}(M,N) = \Ext^\bullet_{(\fg, \fh)}(\CC, \Hom_\CC(M,N)).
\]
\end{prop}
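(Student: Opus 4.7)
The plan is to invoke the classical \emph{tensor identity}: transfer the module $M$ from the first argument of $\Ext$ to an internal Hom inside the second, and in doing so reduce the general statement to Ext with trivial coefficients. Concretely, I would start from any $(U(\fg), U(\fh))$-projective resolution $P_\bullet \twoheadrightarrow \CC$ of the trivial $\fg$-module, tensor termwise with $M$ over $\CC$ to produce a complex $P_\bullet \otimes_\CC M \twoheadrightarrow M$ of $\fg$-supermodules with the diagonal action, and then compute both sides of the claimed equality from this single common resolution via Hom-tensor adjunction.

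The first step is to verify that $P_\bullet \otimes_\CC M \to M$ is again a $(U(\fg), U(\fh))$-projective resolution. Exactness as a sequence of $U(\fh)$-supermodules (which is precisely the $(U(\fg),U(\fh))$-exactness condition) follows by tensoring the $U(\fh)$-linear splittings of $P_\bullet \to \CC$ with $M$. For relative projectivity of each term, I would use the characterization of $(U(\fg),U(\fh))$-projectives as direct summands of modules of the form $U(\fg) \otimes_{U(\fh)} V$, together with the projection formula
\[
(U(\fg) \otimes_{U(\fh)} V) \otimes_\CC M \;\cong\; U(\fg) \otimes_{U(\fh)} (V \otimes_\CC M|_{U(\fh)}),
\]
which exhibits $P_i \otimes_\CC M$ as a direct summand of an induced module, hence $(U(\fg),U(\fh))$-projective. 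The second step is to apply termwise the natural tensor-hom adjunction
\[
\Hom_{U(\fg)}(P_i \otimes_\CC M, N) \;\cong\; \Hom_{U(\fg)}(P_i, \Hom_\CC(M,N)),
\]
where $\Hom_\CC(M,N)$ is endowed with the standard $\fg$-action $(x \cdot \phi)(m) = x\phi(m) - (-1)^{|x||\phi|}\phi(xm)$. Taking cohomology of both sides yields $\Ext^\bullet_{(\fg,\fh)}(M,N)$ on the left (via the resolution of $M$ constructed in the first step) and $\Ext^\bullet_{(\fg,\fh)}(\CC, \Hom_\CC(M,N))$ on the right (via the original resolution of $\CC$), giving the desired identity.

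The main obstacle is bookkeeping in the super setting: the diagonal $\fg$-action on $P_i \otimes_\CC M$, the $\fg$-module structure on $\Hom_\CC(M,N)$, and the Koszul signs in the adjunction must all be arranged so that the adjunction isomorphism is even, natural in both arguments, and compatible with differentials, thereby producing a genuine chain isomorphism rather than a mere degreewise isomorphism. A secondary concern is that $M$ and $N$ may be infinite-dimensional (so $\Hom_\CC(M,N)$ need not lie in $\cO^\fp$), but this is harmless because both the projection formula and the adjunction are purely formal and require no finiteness hypotheses. Once the sign conventions and module structures are pinned down, the argument proceeds exactly as in the ordinary Lie algebra case.
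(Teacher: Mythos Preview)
Your argument is correct and takes a genuinely different route from the paper. The paper proves this proposition via a Grothendieck spectral sequence: with $F(-) = \Hom_\CC(M,-)$ and $F'(-) = \Hom_\fg(\CC,-)$, they invoke a relative Grothendieck construction from \cite{Lo25} to obtain
\[
E_2^{i,j} = \Ext^i_{(\fg,\fh)}(\CC, R^j_{(\fg,\fh)}F(N)) \Rightarrow \Ext^{i+j}_{(\fg,\fh)}(M,N),
\]
and then observe that $F$ is exact (over $\CC$) so the higher derived functors vanish and the spectral sequence collapses. Your approach, by contrast, is the direct tensor-identity argument: tensor a relative projective resolution of $\CC$ by $M$, check it remains a relative projective resolution of $M$ via the projection formula, and apply Hom--tensor adjunction termwise. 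Your route is more elementary and self-contained, requiring no external spectral-sequence machinery; the paper's route, while heavier, is consistent with the Grothendieck-style framework they deploy repeatedly later (e.g.\ in constructing the spectral sequence of Theorem~\ref{thm:E2OpRj}), so for them it is a warm-up for that machinery rather than the shortest path to this particular isomorphism. Both arguments handle infinite-dimensional $M,N$ without difficulty, and your remarks about sign bookkeeping in the super setting are appropriate caveats rather than genuine obstacles.
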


\begin{proof} We use the setup given in \cite[Theorem 5.1.1]{Lo25}. The ideas involve using a Grothendieck construction as in \cite[I 4.1 Proposition]{Ja03}, however, one has to 
make appropriate modifications to the proof given in \cite{La02}. An additional condition (cf. \cite[Section 5.1]{Lo25}) involving splitting is necessary for the spectral sequence to exist. 

Let $F:\text{Mod}(U({\mathfrak g}))\rightarrow \text{Mod}(U({\mathfrak g}))$ be the functor 
$F(-)=\text{Hom}_{{\mathbb C}}(M,-)$, and $F^{\prime}:\text{Mod}(U({\mathfrak g}))\rightarrow \text{Mod}({\mathbb C})$ be the functor 
$F^{\prime}(-)=\text{Hom}_{\mathfrak g}({\mathbb C},-)$. 

For shorthand notation, $({\mathfrak g},{\mathfrak h})$-injective will mean $(U({\mathfrak g}),U({\mathfrak h}))$-injective. Note that if $N$ is $({\mathfrak g},{\mathfrak h})$-injective then 
$\text{Hom}_{\mathbb C}(M,N)$ is $({\mathfrak g},{\mathfrak h})$-injective. Also, $F$ is $({\mathfrak g},{\mathfrak h})$-split. These results use the fact that $F(-)$ is exact because one is taking 
homomorphisms over the field ${\mathbb C}$. 

By employing \cite[Theorem 5.11]{Lo25}, one has a spectral sequence: 
\eq
E_{2}^{i,j}=\text{Ext}^{i}_{({\mathfrak g},{\mathfrak h})}({\mathbb C},R^{j}_{({\mathfrak g},{\mathfrak h})}F(N))\Rightarrow \text{Ext}^{i+j}_{({\mathfrak g},{\mathfrak h})}(M,N).
\endeq
Now $R^{j}_{({\mathfrak g},{\mathfrak h})}F(N)$ is computed by taking a $({\mathfrak g},{\mathfrak h})$-injective resolution of $N$, applying $F$ and then taking cohomology. 
Since $F$ is exact, one has $R^{j}_{({\mathfrak g},{\mathfrak h})}F(N)=0$ for $j>0$. The spectral sequence collapses and yields the result. 
\end{proof}

\subsection{Explicit complex for relative Lie superalgebra cohomology}  

We refer the reader to \cite{Ku02} for the details of many of the proofs that are stated for Lie algebras and can be modified once one accounts for the ${\mathbb Z}_2$-grading,

In this section we define (relative) Lie superalgebra cohomology for the pair $(\fg,\fh)$ where 
$\fg$ is a Lie superalgebra and $\fh \subseteq \fg$ is a Lie subsuperalgebra. This will be defined via a complex which enables us to make explicit computations. 
Later this will be related to the relative cohomology for a pair of superalgebras. 

Let $M$ be a $\fg$-module. We first define the ordinary Lie superalgebra cohomology with coefficients in $M$. 
The cochains are defined as follows. For $p\geq 0$, set 
\eq
C^{p}(\fg;M)=\Hom_{{\mathbb C}}(\Lambda^{p}_{s}(\fg),M),
\endeq
where $\Lambda^{p}_{s}(\fg)$ is the \emph{super} $p$th exterior power of $\fg$, i.e.,
the $p$-fold tensor product of $\fg$ modulo the $\fg$-submodule generated by elements of the following form: 
\eq
x_{1} \otimes \dots \otimes  x_{k} \otimes x_{k+1} \otimes \dots \otimes x_{p} + (-1)^{\p{x}_{k}\p{x}_{k+1}}
x_{1} \otimes \dots \otimes  x_{k+1} \otimes x_{k} \otimes \dots \otimes x_{p},
\endeq 
for homogeneous $x_{1}, \dots , x_{p} \in \fg.$  Thus $x_{k},x_{k+1}$ skew commute, unless both are odd in which case they commute.

Next we will define the differentials,
$d^{p}: C^{p}(\fg;M) \to C^{p+1}(\fg;M)$,
 by 
 \eq
\begin{split}\label{E:differential}
d^{p}(\phi)(x_{1}\wedge \dots \wedge x_{p+1})
&=\sum\nolimits_{i < j} (-1)^{\sigma_{i,j}(x_{1}, \dots , x_{p})} 
\phi([x_{i},x_{j}] \wedge x_{1} \wedge \dots \wedge \hat{x}_{i}\wedge \dots \wedge \hat{x}_{j}\wedge \dots  \wedge x_{p+1}) 
\\
&+ \sum\nolimits_{i}(-1)^{\gamma_{i}(x_{1}, \dots , x_{p},\phi)} 
x_{i}.\phi (x_{1} \wedge \dots \wedge \hat{x}_{i} 
\wedge \dots \wedge x_{p+1}),
\end{split}
\endeq
where $x_{1}, \dotsc , x_{p+1} \in \fg$ and $\phi \in C^{p+1}(\fg;M)$ are assumed to be homogeneous, and   
\begin{align}
\sigma_{i,j}(x_{1}, \dotsc , x_{p})
	&:=i+j+\p{x}_{i}(\p{x}_{1}+\dotsb +\p{x}_{i-1})+\p{x}_{j}(\p{x}_{1}+\dotsb +\p{x}_{j-1}+\p{x}_{i}),
\\
\gamma_{i}(x_{1}, \dotsc , x_{p}, \phi)
	&:=i+1+\p{x}_{i}(\p{x}_{1}+\dotsb + \p{x}_{i-1}+\p{\phi}).
\end{align}  
Then the ordinary cohomology is defined as  
$\text{H}^{p}(\fg, M)=\operatorname{ker} d^{p}/\operatorname{im} d^{p-1}$.

One can construct the relative cohomology as follows.  Let $\fg$, $\fh$, and $M$ be as 
above. Define 
\eq
C^{p}(\fg, \fh, M)
=\Hom_{\fh}(\Lambda^{p}_{s}(\fg/\fh), M).
\endeq
Then the map $d^{p}$ in \eqref{E:differential} descends to give a map $d^{p}: C^{p}(\fg, \fh, M) \to  C^{p+1}(\fg, \fh, M)$. Let  
${\opH}^{p}(\fg, \fh,M)=\operatorname{ker} d^{p}/\operatorname{im} d^{p-1}$. 
If $M$ is a finite-dimensional $\fg$-module and $N$ is a ${\fg}$-module then 
\eq
\Ext^p_{(\fg, \fh)}(M,N)=\opH^p(\fg, \fh, M^{*}\otimes N).
\endeq

In general the cohomology ring for an algebra is difficult to compute. However, in the Lie superalgebra setting, one can provide an explicit computation of 
$\opH^{\bullet}(\fg,\fg_{\0},{\mathbb C})$. A general computation stated below was first discovered and utilized in \cite[Theorem 2.6]{BKN10a}. This was inspired by the computation of this 
ring (as a vector space) for $\mathfrak{gl}(m|n)$ done via Kazhdan-Lusztig polynomials by Brundan (cf. \cite[Example 4.53]{Br03}). 

\begin{theorem} \label{T:(g,g_0)-coho} Let $\fg=\fg_{\0}\oplus \fg_{\1}$ be a finite-dimensional Lie superalgebra and $G_{\0}$ be an algebraic group with 
$\fg_{\0}=\operatorname{Lie }G_{\0}$. Then 
\begin{itemize}
\item[(a)] $R=\opH^{\bullet}(\fg,\fg_{\0},{\mathbb C})\cong S^{\bullet}(\fg_{\1}^{*})^{\fg_{\0}}=S^{\bullet}(\fg_{\1}^{*})^{G_{\0}}$. 
\item[(b)] If $G_{\0}$ is a reductive algebraic group then $R$ is a finitely generated ${\mathbb C}$-algebra. 
\end{itemize}
\end{theorem}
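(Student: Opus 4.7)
The plan is to compute $\opH^\bullet(\fg,\fg_\0,\CC)$ directly using the explicit Chevalley--Eilenberg-type complex introduced above, observing that in the trivial-coefficient, maximally-reducing setting $\fh = \fg_\0$ the differential vanishes identically.

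First I would identify the cochains as a purely invariant-theoretic object. Since $\fg/\fg_\0 \cong \fg_\1$ as $\fg_\0$-modules (where $\fg_\0$ acts by the adjoint action), and since the super exterior power on a purely odd space reduces to the ordinary symmetric power (odd vectors skew-commute with even but super-commute with each other), one has $\Lambda^p_s(\fg/\fg_\0) \cong S^p(\fg_\1)$ as $\fg_\0$-modules. Hence
\[
C^p(\fg,\fg_\0,\CC) = \Hom_{\fg_\0}\bigl(\Lambda^p_s(\fg/\fg_\0),\CC\bigr) \cong \Hom_{\fg_\0}\bigl(S^p(\fg_\1),\CC\bigr) \cong S^p(\fg_\1^*)^{\fg_\0}.
\]

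Next I would show the differential $d^p$ is zero on all cochains. Inspecting formula~\eqref{E:differential} with $M = \CC$ the trivial module, the second sum disappears because each $x_i$ acts trivially on $\phi(\cdots) \in \CC$. For the first sum, lifting any class $\bar{x}_i \in \fg/\fg_\0$ to a representative $x_i \in \fg_\1$, the bracket $[x_i,x_j] \in [\fg_\1,\fg_\1] \subseteq \fg_\0$, so $\overline{[x_i,x_j]} = 0$ in $\fg/\fg_\0 = \fg_\1$ and each summand in the bracket term vanishes. Thus every cochain is a cocycle and no nontrivial coboundaries exist, yielding
\[
R = \opH^\bullet(\fg,\fg_\0,\CC) = C^\bullet(\fg,\fg_\0,\CC) = S^\bullet(\fg_\1^*)^{\fg_\0}.
\]
To replace $\fg_\0$-invariants by $G_\0$-invariants, I would use that $\fg_\1^*$ is a rational $G_\0$-module with $\fg_\0 = \Lie G_\0$; the cohomology depends only on the $\fg_\0$-action so we may pass to the identity component of $G_\0$, for which the two invariant rings coincide by standard algebraic-group theory.

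For part (b), if $G_\0$ is a reductive algebraic group acting rationally on the finite-dimensional space $\fg_\1$, Hilbert's finite generation theorem for invariants of reductive groups immediately implies that $S^\bullet(\fg_\1^*)^{G_\0}$ is a finitely generated $\CC$-algebra. The only real subtlety in the whole argument is the sign and lifting bookkeeping that justifies descending the absolute differential \eqref{E:differential} to the relative complex, but once those conventions are fixed, the vanishing of $d^p$ is forced by the two structural facts $M = \CC$ trivial and $[\fg_\1,\fg_\1] \subseteq \fg_\0$, and there is no genuine mathematical obstacle beyond that.
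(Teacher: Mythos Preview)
Your proof is correct and follows essentially the same approach as the paper: both identify the cochains with $S^\bullet(\fg_\1^*)^{\fg_\0}$ and show the differential vanishes using that $M=\CC$ kills the second sum in \eqref{E:differential} while $[\fg_\1,\fg_\1]\subseteq\fg_\0$ kills the first, then invoke classical invariant theory for part (b). You actually give more detail than the paper does, particularly in explaining why $\Lambda^p_s$ on a purely odd space is $S^p$ and in addressing the passage from $\fg_\0$-invariants to $G_\0$-invariants.
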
 

\begin{proof} (a) First note that the second sum of \eqref{E:differential} are all zero when $M\cong {\mathbb C}$. Next observe that $[\fg_{\1},\fg_{\1}] \subseteq \fg_{\0}$, thus 
each $[x_{i}, x_{j}]$ is always zero $\fg/\fg_{\0}$, and the first sum of \eqref{E:differential} is zero. Hence,  differentials $d^{p}$ are all zero, and thus the cohomology ring 
identifies with the cochains. 

(b) The group $G_{\0}$ is reductive and acts as automorphisms on $S^{\bullet}(\fg_{\1}^{*})$. It follows by the standard invariant theory result that $R$ is finitely generated. 
\end{proof} 

\subsection{Relative Lie Superalgebra Cohomology: Spectral Sequences}  

In this section, we present a series of spectral sequences that will be useful for computing relative Lie superalgebra cohomology. 
The first spectral sequence arises from the relative cohomology for the pair $(\fg, \fh)$ with an ideal $\fii\unlhd \fg$. One should view this spectral sequence as an analog to the 
standard Lyndon-Hochschild-Serre spectral sequence. 

\begin{theorem}\label{thm:LHSSS}
Let $\fh\leq \fg$ be a Lie subsuperalgebra, $\fii \unlhd \fg$ be an ideal, and let $M$ and $N$ be $\fg$-modules. Then there exists a first quadrant 
spectral sequence 
\eq
E_{2}^{p,q}=\opExt^{p}_{(\fg/\fii, {{\mathfrak h}/{\mathfrak h}}\cap {\mathfrak i})}({\mathbb C}, 
\opExt^{q}_{({\mathfrak i},{\mathfrak h}\cap {\mathfrak i})}(M,N))\Rightarrow 
\opExt^{p+q}_{(\fg,\fh)}(M,N).
\endeq
\end{theorem}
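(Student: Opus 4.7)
Plan.  The strategy is to construct this Hochschild--Serre type spectral sequence by filtering the explicit Chevalley--Eilenberg complex that computes relative Lie superalgebra cohomology.  Applying Proposition~\ref{P:ExtC} and Theorem~\ref{T:relatescohom2} to each of the three Ext groups appearing in the statement, I would reduce to proving that for any $\fg$-module $W$ there is a first-quadrant spectral sequence
\[
E_2^{p,q} = \opH^p\bigl(\fg/\fii,\ \fh/(\fh\cap\fii),\ \opH^q(\fii,\fh\cap\fii,W)\bigr)\Rightarrow \opH^{p+q}(\fg,\fh,W),
\]
and then specialize $W=\Hom_\CC(M,N)$ to recover the statement.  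Here the $\fg$-action on $W$ descends, on cohomology, to an action of $\fg/\fii$ on $\opH^q(\fii,\fh\cap\fii,W)$, which restricts along the embedding $\fh/(\fh\cap\fii)\hookrightarrow(\fh+\fii)/\fii\subseteq\fg/\fii$ to the module structure needed to form the outer relative cohomology.

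Next, I would filter the complex $C^n(\fg,\fh,W)=\Hom_\fh(\Lambda_s^n(\fg/\fh),W)$.  Since $\fii$ is an ideal, $\fh+\fii$ is a Lie subsuperalgebra, yielding a short exact sequence of $\fh$-modules
\[
0\to\fii/(\fh\cap\fii)\to\fg/\fh\to\fg/(\fh+\fii)\to 0.
\]
Declaring elements of $\fg/(\fh+\fii)$ to have \emph{horizontal} degree $1$ and elements of $\fii/(\fh\cap\fii)$ to have horizontal degree $0$ induces a filtration on $\Lambda_s^\bullet(\fg/\fh)$ by horizontal degree, and dually a decreasing filtration $F^pC^n$ consisting of those $\phi$ that vanish on forms of horizontal degree less than $p$.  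A case analysis on the two summands of the differential in~\eqref{E:differential}, using $[\fii,\fg]\subseteq\fii$, shows that this filtration is preserved by $d$: every bracket term $[x_i,x_j]$ and every action term $x_i.\phi$ either preserves or decreases the horizontal degree of its argument.

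Finally, the associated graded becomes
\[
\mrm{gr}^{p}\, C^{p+q}(\fg,\fh,W)\cong \Hom_{\fh/(\fh\cap\fii)}\bigl(\Lambda_s^p(\fg/(\fh+\fii)),\,C^q(\fii,\fh\cap\fii,W)\bigr),
\]
where I use that $\fh\cap\fii$ acts trivially on $\fg/(\fh+\fii)$ so that the $\fh$-invariance factors through an inner $(\fh\cap\fii)$-invariance and an outer $\fh/(\fh\cap\fii)$-invariance.  The vertical $E_0$-differential is the Chevalley--Eilenberg differential of the $(\fii,\fh\cap\fii)$-complex, producing $E_1^{p,q}= C^p(\fg/\fii,\fh/(\fh\cap\fii),\opH^q(\fii,\fh\cap\fii,W))$; the induced horizontal differential on $E_1$ is then the relative Chevalley--Eilenberg differential for the outer pair, yielding the claimed $E_2$-page.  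Convergence is automatic since the filtration is bounded in each total degree (horizontal degree lies in $[0,\dim\fg/(\fh+\fii)]$).  The main technical obstacle will be verifying, in the $\ZZ_2$-graded setting, that the $E_1$-differential identifies with the relative Chevalley--Eilenberg differential of the outer pair: this amounts to checking that the induced $\fg/\fii$-action on $\opH^q(\fii,\fh\cap\fii,W)$ is well defined with signs from~\eqref{E:differential} tracked correctly, and that the $(\fh\cap\fii)$-action on cohomology is trivial.  These are essentially bookkeeping issues familiar from the classical Hochschild--Serre spectral sequence, but the super setting demands careful sign-tracking throughout.
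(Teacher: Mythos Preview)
The paper does not give a proof of this theorem; it states it as a standard analog of the Lyndon--Hochschild--Serre spectral sequence, deferring implicitly to the classical construction in \cite{HS53} and to the remark that proofs in \cite{Ku02} for Lie algebras adapt to the $\ZZ_2$-graded setting. Your plan---reducing to a single coefficient module $W=\Hom_\CC(M,N)$ via Proposition~\ref{P:ExtC} and Theorem~\ref{T:relatescohom2}, then filtering the relative Chevalley--Eilenberg complex by horizontal degree along the exact sequence $0\to\fii/(\fh\cap\fii)\to\fg/\fh\to\fg/(\fh+\fii)\to 0$---is precisely that classical Hochschild--Serre construction, and is correct.

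One small caveat: your reduction step invokes Proposition~\ref{P:ExtC} and Theorem~\ref{T:relatescohom2} for each of the three pairs $(\fg,\fh)$, $(\fii,\fh\cap\fii)$, $(\fg/\fii,\fh/(\fh\cap\fii))$, which requires the semisimplicity or splitting hypotheses of Theorem~\ref{T:relatescohom2} for all three; these are not stated in Theorem~\ref{thm:LHSSS} itself but hold in every application made in the paper.
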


One can replace an ideal ${\mathfrak i}$ by a subalgebra ${\mathfrak q}$, however, the relationship between the cohomology groups is more complicated. For Lie algebras, this is discussed in the seminal paper by 
Hochschild and Serre \cite{HS53}, and generalized in the work of Maurer \cite{Ma19} for Lie superalgebras. 

\begin{theorem} Let ${\mathfrak q}\leq \fh\leq \fg$ be inclusions of finite-dimensional Lie superalgebras, and let $M$ be a $\fg$-module. Then there exists a first quadrant 
spectral sequence 
$$E_{1}^{p,q}={\opH}^{q}(\fh,{\mathfrak q}, \Hom_{\mathbb C}(\Lambda^{p}(\fg/\fh),M))\Rightarrow 
{\opH}^{q}(\fg,\fq, M).$$
\end{theorem}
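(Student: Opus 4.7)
Construct a decreasing, bounded filtration $F^{\bullet}$ on a super Chevalley--Eilenberg cochain complex computing the claimed abutment so that the associated graded pieces are precisely the complexes whose cohomology is the stated $E_{1}$ page. This is the super analogue of the classical Hochschild--Serre spectral sequence attached to a triple of subalgebras, adapted to Lie superalgebras as in Maurer's thesis. First I would choose homogeneous super vector space complements $\fg = \fh \oplus \mathfrak{v}$ and $\fh = \fq \oplus \mathfrak{w}$; the spaces $\mathfrak{v}$ and $\mathfrak{w}$ are $\fh$- and $\fq$-modules respectively, and provide identifications $\mathfrak{v} \cong \fg/\fh$ and $\mathfrak{w} \cong \fh/\fq$ of super vector spaces. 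These give a $\fq$-equivariant splitting
\[
\Lambda^{n}(\fg/\fq) \;\cong\; \bigoplus_{p+q=n} \Lambda^{p}(\fg/\fh) \otimes \Lambda^{q}(\fh/\fq),
\]
and hence, via $\Hom_{\fq}(-,M)$ and tensor--hom adjunction, a bigrading
\[
C^{n}(\fg,\fq,M) \;\cong\; \bigoplus_{p+q=n} C^{q}\bigl(\fh,\fq,\Hom_{\mathbb{C}}(\Lambda^{p}(\fg/\fh),M)\bigr).
\]

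Next I would set $F^{p}C^{n}$ to be the subspace of cochains supported in bidegree $(p',q')$ with $p'\geq p$, and verify that the super Chevalley--Eilenberg differential $d$ respects $F^{\bullet}$. The key structural point is that $\fh$ is a subalgebra, so a bracket $[x_{i},x_{j}]$ with both factors in $\fh$ lies in $\fh$ and contributes zero $\fg/\fh$-degree, while a bracket with one $\fh$-factor and one $\mathfrak{v}$-factor produces an element of $\fg$ whose projection to $\fg/\fq$ cannot raise the total $\fg/\fh$-degree of the remaining wedge. The action terms $x_{i}.\phi$ behave analogously via the $\fh$-module structures on $\mathfrak{v}$ and on $M$.

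On the associated graded $\mrm{gr}^{p}C^{p+q}(\fg,\fq,M)$ the only surviving part of $d$ is the super Chevalley--Eilenberg differential for $(\fh,\fq)$-cohomology acting on the $\fh$-module $\Hom_{\mathbb{C}}(\Lambda^{p}(\fg/\fh),M)$, so taking cohomology yields
\[
E_{1}^{p,q} \;\cong\; H^{q}\bigl(\fh,\fq,\Hom_{\mathbb{C}}(\Lambda^{p}(\fg/\fh),M)\bigr),
\]
as required. Since $\dim \fg < \infty$, the filtration $F^{\bullet}$ is bounded in each total degree, so this is a convergent first-quadrant spectral sequence whose target is the cohomology of the total complex, which one identifies with the relative cohomology appearing as the abutment in the statement.

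\textbf{Main obstacle.} The principal technical difficulty lies in the super sign bookkeeping required to verify that $d$ preserves $F^{\bullet}$. The signs $\sigma_{i,j}$ and $\gamma_{i}$ in the differential \eqref{E:differential} depend on the parities of all preceding entries, so decomposing each $x_{i}$ into its $\fh$- and $\mathfrak{v}$-components forces a careful regrouping of parity contributions, and the reshuffling of $\Lambda^{p}(\fg/\fh)\otimes\Lambda^{q}(\fh/\fq)$ inside $\Lambda^{p+q}(\fg/\fq)$ must be compatible with these signs. One must then check that the induced horizontal differential on $\mrm{gr}^{p}$ coincides, with correct signs, with the intrinsic super Chevalley--Eilenberg differential of $(\fh,\fq)$-cohomology with coefficients in $\Hom_{\mathbb{C}}(\Lambda^{p}(\fg/\fh),M)$.
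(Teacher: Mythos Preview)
Your approach is correct and is precisely the Hochschild--Serre filtration argument that the paper invokes: the paper gives no proof of this statement, attributing it instead to \cite{HS53} for Lie algebras and to Maurer \cite{Ma19} for the super extension, and your construction is exactly that filtration on $C^{\bullet}(\fg,\fq,M)$ by $\Lambda^{p}(\fg/\fh)$-degree.

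One remark: the abutment as printed in the statement, $\opH^{q}(\fg,\fh,M)$, is evidently a typo; your filtered complex has total cohomology $\opH^{p+q}(\fg,\fq,M)$, which is the correct target and is consistent with the specialization stated immediately afterward in the paper (Theorem~\ref{thm:Maurer}, where $\fh=\fg_{\0}$ and the abutment is $\opH^{p+q}(\fg,\fq,M)$). Your proposal silently handles this correctly by identifying the abutment with the cohomology of $C^{\bullet}(\fg,\fq,M)$.
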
 

In the case when $\fh=\fg_{\0}$ (a Lie algebra of a reductive group) one can use known results about relative Lie algebra cohomology to explicitly identify the $E_{2}$-term. 

\begin{theorem}[{\cite[Theorem~3.1.1]{Ma19}}] \label{thm:Maurer}
Let ${\mathfrak q}\leq \fg_{\0}\leq \fg$ be inclusions of finite-dimensional Lie superalgebras, and let $M$ be a finite-dimensional $\fg$-module. Then there exists a first quadrant 
spectral sequence 
$$E_{2}^{p,q}={\opH}^{p}(\fg,\fg_\0,M)\otimes {\opH}^{q}(\fg_\0, {\mathfrak q}, {\mathbb C}) \Rightarrow 
{\opH}^{p+q}(\fg,\fq, M).$$
\end{theorem}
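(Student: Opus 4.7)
My plan is to construct the spectral sequence from a filtered bicomplex structure on the Chevalley--Eilenberg cochain complex $C^\bullet(\fg,\fq,M)=\Hom_\fq(\Lambda^\bullet_s(\fg/\fq),M)$. Using the vector space decomposition $\fg/\fq = (\fg_\0/\fq)\oplus \fg_\1$, I bigrade each cochain by $(p,q)$, where $p$ counts arguments in $\fg_\1$ and $q$ counts arguments in $\fg_\0/\fq$, via the splitting $\Lambda^n_s(\fg/\fq) = \bigoplus_{p+q=n} S^p(\fg_\1)\otimes \Lambda^q(\fg_\0/\fq)$. Tracking how the super Chevalley--Eilenberg differential $d$ permutes these bidegrees decomposes $d=d_{0,1}+d_{1,0}+d_{2,-1}$, with subscripts indicating bidegree shifts: $d_{0,1}$ collects the even-even and even-odd brackets along with the even module action, $d_{1,0}$ is the odd module action, and $d_{2,-1}$ records the odd-odd brackets, whose output lies in $\fg_\0$.

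Next, I would filter by the odd degree $p$. Since every piece of $d$ has non-negative $p$-shift, this descending filtration is compatible with $d$, and it is bounded in each total degree because $\fg_\1$ is finite-dimensional. The associated spectral sequence has $E_0$ equal to the bigraded pieces with $d_0=d_{0,1}$, which on the $p$-th column is precisely the $(\fg_\0,\fq)$-relative Chevalley--Eilenberg differential with coefficients $\Hom_\CC(S^p(\fg_\1),M)$. Therefore
\[
E_1^{p,q}\;\cong\;\opH^q\!\left(\fg_\0,\fq,\,\Hom_\CC(S^p(\fg_\1),M)\right),
\]
recovering the preceding subalgebra spectral sequence specialized to $\fh=\fg_\0$, and converging to $\opH^{p+q}(\fg,\fq,M)$.

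To arrive at the stated $E_2$-page, I would invoke reductivity of $\fg_\0$, which is automatic in the quasi-reductive setting. A Casimir argument shows that any nontrivial $\fg_\0$-isotypic component of a finite-dimensional semisimple $\fg_\0$-module $V$ contributes zero to $\opH^\bullet(\fg_\0,\fq,V)$, yielding the splitting $\opH^q(\fg_\0,\fq,V)\cong V^{\fg_\0}\otimes \opH^q(\fg_\0,\fq,\CC)$. Applied isotypically to $V=\Hom_\CC(S^p(\fg_\1),M)$, this identifies
\[
E_1^{p,q}\;\cong\; \Hom_{\fg_\0}(S^p(\fg_\1),M)\otimes \opH^q(\fg_\0,\fq,\CC)\;=\;C^p(\fg,\fg_\0,M)\otimes \opH^q(\fg_\0,\fq,\CC).
\]
The decisive observation is that on $(\fg,\fg_\0)$-cochains every argument is odd, so the bracket terms in the relative Chevalley--Eilenberg differential land in $\fg_\0$ and vanish modulo $\fg_\0$; the surviving piece is exactly the odd module action, which is what induces $d_1$. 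Hence $d_1$ acts as the $(\fg,\fg_\0)$-differential on the first factor and as the identity on the second, and taking cohomology yields the desired
\[
E_2^{p,q}\;\cong\; \opH^p(\fg,\fg_\0,M)\otimes \opH^q(\fg_\0,\fq,\CC).
\]

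The hard part will be establishing the Künneth-type splitting for $\opH^q(\fg_\0,\fq,-)$ in a form applicable to the (possibly infinite-dimensional) coefficient modules at hand. One must apply the Casimir argument componentwise along a $\fg_\0$-locally-finite decomposition of $M$ and verify that tensoring with the finite-dimensional semisimple module $S^p(\fg_\1)^*$ preserves enough semisimplicity for the splitting to pass to the relevant direct sum. A secondary but necessary check is a careful bookkeeping of Koszul signs to confirm that the map identifying $d_1$ with the $(\fg,\fg_\0)$-differential is genuinely a map of complexes after the splitting.
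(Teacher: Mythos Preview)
Your approach is correct and follows essentially the same route the paper indicates: the paper does not prove this theorem itself but cites it from Maurer, prefacing the statement by noting that it arises from the general Hochschild--Serre $E_1$-spectral sequence for $\fq\leq\fh\leq\fg$ (stated just before, and which you recover as your $E_1$-page) specialized to $\fh=\fg_\0$, together with the reductivity of $\fg_\0$ to obtain the K\"unneth-type splitting of the $E_1$-term. Your identification of $d_1$ with the $(\fg,\fg_\0)$-differential tensored with the identity, and your flagging of the local-finiteness hypothesis on $M$ needed for the Casimir argument, are exactly the points one must check to complete the argument.
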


\subsection{Cohomology for the pair $(\fg,\fh)$ and finite generation}\label{SS:ggzerocohom}

 We will always assume $\fg$ is a classical (quasi-reductive) Lie superalgebra.  A $\fg$-supermodule will always be assumed to be an object in the category $\mathcal{C}=\mathcal{C}_{(\fg,\fg_{\0})}$ and a finite dimensional $\fg$-supermodule will always mean an object in the category $\mathcal{F}=\mathcal{F}_{(\fg,\fg_{\0})}.$

\begin{theorem}\label{T:cohomologycalc} 
  Let $\fg = {\fg}_{\0} \oplus {\fg}_{\1}$ be a classical Lie superalgebra, and $\fa \leq {\fg}_{\0}$ an (even) subalgebra, and $M$ a $\fg$-module.
  \begin{enumerate}
    \item[(a)] There is a spectral sequence $\{E_r^{p,q}\}$ when $M$ is finite-dimensional which computes cohomology and satisfies
  \[
    E_2^{p,q}(M) \cong {\opH}^p(\g,\fg_{\0},M) \otimes {\opH}^q({\fg}_{\0},\fa,\CC) \Rightarrow {\opH}^{p+q}(\fg,\fa,M).
  \]
  For $1 \leq r \leq \infty$, $E_r^{\bullet,\bullet}(M)$ is a module for $E_2^{\bullet,\bullet}(\CC)$. 
  \item[(b)] When $M$ is finite-dimensional, ${\opH}^\bullet(\fg,\fa,M)$ is a finitely generated $R$-module.
  \item[(c)] More generally, if $C^{\bullet}({\mathfrak g},{\mathfrak g}_{\0},M)$ is a finitely generated module over $R$ then 
  ${\opH}^\bullet(\fg,\fa,M)$ is a finitely-generated $R$-module.
  \item[(d)] Moreover, the cohomology ring ${\opH}^\bullet(\fg,\fa,\CC)$ is a finitely generated $\CC$-algebra.
  \end{enumerate}
\end{theorem}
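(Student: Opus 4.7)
My plan is to first establish the spectral sequence in part~(a), then deduce parts (c), (b), and (d) by a noetherian argument organized around it. For part~(a), I would extend the proof of Theorem~\ref{thm:Maurer} to arbitrary coefficients $M$. The relative cochain complex $C^\bullet(\fg, \fa, M) = \Hom_\fa(\Lambda^\bullet_s(\fg/\fa), M)$ admits a filtration by the degree of the $\fg/\fg_\0$-component coming from the $\fa$-stable decomposition $\fg/\fa \cong (\fg_\0/\fa) \oplus (\fg/\fg_\0)$, which is $\fa$-stable because $\fg_\0$ is $\fa$-stable and $\fg/\fg_\0 \cong \fg_\1$ is $\fg_\0$-stable. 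Computing the associated spectral sequence by taking cohomology first in the $\fg_\0$-direction and then in the $\fg/\fg_\0$-direction yields
\[
E_2^{p,q}(M) = \opH^p(\fg, \fg_\0, M) \otimes \opH^q(\fg_\0, \fa, \CC).
\]
Since cup product on cochains preserves the filtration, each $E_r^{\bullet,\bullet}(M)$ acquires the structure of an $E_r^{\bullet,\bullet}(\CC)$-module with $E_r^{\bullet,\bullet}(\CC)$-linear differentials.

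For the finite generation statements, I would proceed as follows. By Theorem~\ref{T:(g,g_0)-coho}(b), the ring $R$ is a finitely generated (hence noetherian) $\CC$-algebra. In the relevant settings---such as $\fa = \fl_\0$ a Levi, where $\opH^\bullet(\fg_\0, \fa, \CC) \cong \Ext^\bullet_{\cO^{\fp_\0}}(\CC, \CC)$ is finite-dimensional since $\cO^{\fp_\0}$ has finite global dimension---the ring $E_2^{\bullet,\bullet}(\CC) = R \otimes \opH^\bullet(\fg_\0, \fa, \CC)$ is a finitely generated $\CC$-algebra. Under the hypothesis of (c), $C^\bullet(\fg, \fg_\0, M)$ is finitely generated over $R$, and since the Chevalley--Eilenberg differential is $R$-linear, the subquotient $\opH^\bullet(\fg, \fg_\0, M)$ is too. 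Thus $E_2^{\bullet,\bullet}(M)$ is finitely generated over $E_2^{\bullet,\bullet}(\CC)$. A standard noetherian spectral sequence argument---finite generation passes from $E_r$ to $E_{r+1}$ since $d_r$ is $E_r(\CC)$-linear, and from $E_\infty$ to the abutment by first-quadrant boundedness---yields finite generation of $\opH^\bullet(\fg, \fa, M)$ over $E_2^{\bullet,\bullet}(\CC)$, and hence over $R$ since $\opH^\bullet(\fg_\0, \fa, \CC)$ is finite-dimensional. Part~(b) reduces to (c) by verifying its hypothesis for finite-dimensional $M$: as in the proof of Theorem~\ref{T:(g,g_0)-coho}, $\Lambda^\bullet_s(\fg_\1)$ is the polynomial algebra $S^\bullet(\fg_\1)$, and
\[
C^\bullet(\fg, \fg_\0, M) \cong \bigl( S^\bullet(\fg_\1^*) \otimes M \bigr)^{G_\0},
\]
which is finitely generated over $R = S^\bullet(\fg_\1^*)^{G_\0}$ by Hilbert's theorem on invariants of reductive groups acting on finitely generated modules over finitely generated commutative algebras. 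Part~(d) is the special case $M = \CC$ of (b).

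The principal technical hurdle is the multiplicative refinement of part~(a): one must verify that the cochain-level filtration is compatible with the cup product at every page of the spectral sequence, not merely at $E_2$, so that the $E_r(\CC)$-module structure and the $E_r(\CC)$-linearity of $d_r$ hold uniformly in $r$. This uniformity is precisely what enables the noetherian descent to propagate finite generation through the entire spectral sequence rather than stall at $E_2$. A secondary point, easily handled in the first-quadrant setting but worth confirming explicitly, is that the filtration on the abutment is by $R$-submodules with $E_\infty^{\bullet,\bullet}$ as its associated graded, so that finite generation of $E_\infty$ truly descends to $\opH^\bullet(\fg, \fa, M)$ itself.
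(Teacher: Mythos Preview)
Your approach is correct and essentially the same as the paper's. The paper defers parts (a) and (d) entirely to Maurer's Main Theorem \cite{Ma19}, and for (b) and (c) only records the verification that $C^\bullet(\fg,\fg_\0,M)$ (and hence its subquotient $\opH^\bullet(\fg,\fg_\0,M)$) is finitely generated over $R$, leaving the noetherian spectral sequence passage to $\opH^\bullet(\fg,\fa,M)$ implicit in the citation; you have simply spelled this passage out.

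One small point: your hedge ``in the relevant settings---such as $\fa = \fl_\0$ a Levi'' is unnecessary and, as written, weakens your proof to that case. For \emph{any} even subalgebra $\fa \leq \fg_\0$, the relative cochain complex $C^\bullet(\fg_\0,\fa,\CC) = \Hom_\fa(\Lambda^\bullet(\fg_\0/\fa),\CC)$ is finite-dimensional because $\fg_\0/\fa$ is purely even and finite-dimensional, so the exterior algebra terminates. Hence $\opH^\bullet(\fg_\0,\fa,\CC)$ is always finite-dimensional, $E_2^{\bullet,\bullet}(\CC) = R \otimes \opH^\bullet(\fg_\0,\fa,\CC)$ is module-finite over the noetherian ring $R$, and your argument goes through without restriction. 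Relatedly, it is cleaner to run the noetherian descent over $R$ itself rather than over $E_r^{\bullet,\bullet}(\CC)$: since $R = E_2^{\bullet,0}(\CC)$ sits on the bottom row, every $d_r$ vanishes on it, so $R$ acts on all pages and all differentials are $R$-linear; this avoids worrying about whether the subquotient rings $E_r^{\bullet,\bullet}(\CC)$ remain noetherian.
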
 

\begin{proof}  (a) and (d) follow from \cite[Main Theorem]{Ma19}. (b) and (c) Let $M$ be a finite-dimensional $\fg$-module. Observe that $S(\fg_{\1}^{*})\otimes M$ is a finitely generated $S(\fg_{\1}^{*})$-module. Since $G_{\0}$ 
is reductive and the fixed points under $G_{\0}$ is exact, $C^{\bullet}(\fg,\fg_{\0},M)$ is a finitely generated module over $S(\fg_{\1}^{*})^{G_{\0}}$. Therefore, by Theorem~\ref{T:(g,g_0)-coho}, 
$C^{\bullet}(\fg,\fg_{\0},M)$ is a finitely generated module over $R=\opH^{\bullet}(\fg,\fg_{\0},{\mathbb C})$, thus any subquotient is finitely generated, and 
 ${\opH}^{\bullet}(\fg, \fg_{\0},M)$ is finitely generated as an $R$-module.
\end{proof} 

\subsection{Computation of  other important relative cohomology rings} 

In this section we show how to apply Theorem~\ref{T:cohomologycalc} to calculate the ring structure for cohomology rings that are relevant for the study of Category ${\mathcal O}$ for Lie superalgebras. 

\begin{theorem} \label{T:sscollapse} Let $\fg=\fg_{\0}\oplus \fg_{\1}$ be a classical Lie superalgebra, and 
${\mathfrak a}\leq \fg_{\0}$. Then there exists a spectral sequence 
$$E_{2}^{i,j}=\opH^{i}(\fg,\fg_{\0},{\mathbb C})\otimes \opH^{j}(\fg_{\0},{\mathfrak a},{\mathbb C})\Rightarrow 
\opH^{i+j}(\fg,{\mathfrak a},{\mathbb C}).$$ 
If this spectral sequence collapses then 
\begin{itemize} 
\item[(a)]  as graded rings, 
$$\opH^{\bullet}(\fg,{\mathfrak a},{\mathbb C})\cong \opH^{\bullet}(\fg,\fg_{\0},{\mathbb C})\otimes \opH^{\bullet}(\fg_{\0},{\mathfrak a},{\mathbb C}),$$ 
\item[(b)] one has homeomorphisms as topological spaces, 
$$\operatorname{Spec}(\opH^{\bullet}(\fg,{\mathfrak a},{\mathbb C}))\cong \operatorname{Spec}(\opH^{\bullet}(\fg,\fg_{\0},{\mathbb C})) 
\cong \operatorname{Spec}(S^{\bullet}(\fg_{\1})^{G_{\0}}).$$ 
\end{itemize} 
\end{theorem}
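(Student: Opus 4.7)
The spectral sequence itself is an immediate specialization of Theorem~\ref{thm:Maurer}: take $M = \CC$ with the nested subalgebras $\fa \leq \fg_\0 \leq \fg$. It is a multiplicative spectral sequence, with $E_2$ given as a tensor product of graded rings and every differential $d_r$ a derivation. Under the collapse hypothesis one has $E_2 = E_\infty$, so the induced filtration on $\opH^\bullet(\fg, \fa, \CC)$ has associated graded isomorphic to $\opH^\bullet(\fg, \fg_\0, \CC) \otimes \opH^\bullet(\fg_\0, \fa, \CC)$ as bigraded rings.

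For part (a), my plan to upgrade this associated-graded statement to an honest graded ring isomorphism is to construct two ring maps into $\opH^\bullet(\fg, \fa, \CC)$ and take their product. The first is the inflation edge map $\phi : \opH^\bullet(\fg, \fg_\0, \CC) \hookrightarrow \opH^\bullet(\fg, \fa, \CC)$ arising from $E_\infty^{\bullet, 0}$ sitting as the lowest filtration piece. The second is a ring map $\psi : \opH^\bullet(\fg_\0, \fa, \CC) \to \opH^\bullet(\fg, \fa, \CC)$ produced, at the cochain level using the complex $C^\bullet(\fg, \fa, \CC) = \Hom_\fa(\Lambda^\bullet_s(\fg/\fa), \CC)$, from a $\fg_\0$-equivariant section $\fg_\0 / \fa \hookrightarrow \fg/\fa$ of the natural projection (which exists because $\fg_\0$ is reductive and acts semisimply on $\fg_\1$). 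Then $\Phi(\alpha \otimes \beta) := \phi(\alpha)\,\psi(\beta)$ defines a graded ring homomorphism
\begin{equation*}
\Phi : \opH^\bullet(\fg, \fg_\0, \CC) \otimes \opH^\bullet(\fg_\0, \fa, \CC) \longrightarrow \opH^\bullet(\fg, \fa, \CC),
\end{equation*}
which on associated graded recovers the spectral sequence identification and is therefore a filtered isomorphism. The main obstacle will be justifying $\psi$: unlike the Lyndon--Hochschild--Serre situation for a normal subalgebra, $\fg_\0$ need not be an ideal of $\fg$, so one must verify that restriction of cochains along the chosen section commutes with the differential modulo boundaries. Here the simplification is that the coefficients are trivial: the second sum in \eqref{E:differential} vanishes, so only bracket terms contribute, and brackets between $\fg_\1$ and $\fg_\0$ land in $\fg_\0$ which are killed once one restricts to the subcomplex $\Lambda_s^\bullet(\fg_\0/\fa)$.

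For part (b), by (a) the cohomology ring factors as a tensor product of graded-commutative $\CC$-algebras. The factor $\opH^\bullet(\fg_\0, \fa, \CC)$, being the relative Lie algebra cohomology of a reductive pair with trivial coefficients, is finite-dimensional with $\opH^0 = \CC$ and nilpotent augmentation ideal, so its $\operatorname{Spec}$ is a single topological point. Since $\operatorname{Spec}$ of a tensor product of finitely generated $\CC$-algebras is the product of their topological spectra (after passing to reduced rings), this factor contributes only a point, yielding
\begin{equation*}
\operatorname{Spec}(\opH^\bullet(\fg, \fa, \CC)) \cong \operatorname{Spec}(\opH^\bullet(\fg, \fg_\0, \CC)).
\end{equation*}
The second homeomorphism is immediate from Theorem~\ref{T:(g,g_0)-coho}(a), which gives the ring isomorphism $\opH^\bullet(\fg, \fg_\0, \CC) \cong S^\bullet(\fg_\1^*)^{G_\0}$, combined with the classical observation that $\fg_\1 \cong \fg_\1^*$ as $G_\0$-modules (via an even invariant bilinear form on classical $\fg$), so $S^\bullet(\fg_\1^*)^{G_\0}$ and $S^\bullet(\fg_\1)^{G_\0}$ have canonically homeomorphic spectra.
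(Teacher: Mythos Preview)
Your overall strategy for (a) matches the paper's: realize both $\opH^{\bullet}(\fg,\fg_{\0},\CC)$ and $\opH^{\bullet}(\fg_{\0},\fa,\CC)$ as subrings of $\opH^{\bullet}(\fg,\fa,\CC)$ and then argue that multiplication gives the ring isomorphism. The paper does this by invoking a cochain decomposition $C^{n}(\fg,\fa,\CC)=C^{n}(\fg,\fa,\CC)_{(1)}\oplus C^{n}(\fg_{\0},\fa,\CC)$ from \cite[3.2]{Ma19} together with the argument of \cite[Section~3.1]{DNN12}; your $\phi$ is exactly the paper's identification of $R$ as a subring, and your $\psi$ is meant to play the role of the second summand.

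There is, however, a genuine gap in your construction of $\psi$. Pulling back cochains along the projection $\fg/\fa\twoheadrightarrow\fg_{\0}/\fa$ (equivalently, extending $\phi\in C^{p}(\fg_{\0},\fa,\CC)$ by zero on any wedge containing an odd entry) does \emph{not} commute with the differential. Your justification asserts $[\fg_{\1},\fg_{\0}]\subseteq\fg_{\0}$, which is false; one has $[\fg_{\1},\fg_{\0}]\subseteq\fg_{\1}$. More to the point, the obstruction you miss comes from $[\fg_{\1},\fg_{\1}]\subseteq\fg_{\0}$: if $z_{1},z_{2}\in\fg_{\1}$ and $y_{1},\dots,y_{p-1}\in\fg_{\0}/\fa$, then the bracket term in $d\tilde\phi(y_{1}\wedge\cdots\wedge y_{p-1}\wedge z_{1}\wedge z_{2})$ coming from $[z_{1},z_{2}]$ is $\pm\phi\big(\overline{[z_{1},z_{2}]}\wedge y_{1}\wedge\cdots\wedge y_{p-1}\big)$, which has no reason to vanish. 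So extension by zero is not a chain map, and your $\psi$ is not defined at the cochain level as written. The paper sidesteps this by citing the external decomposition and then the filtered-ring argument of \cite{DNN12}; you would need either to use those references or to produce a genuine ring section of the restriction map $\opH^{\bullet}(\fg,\fa,\CC)\to\opH^{\bullet}(\fg_{\0},\fa,\CC)$ by other means (e.g.\ lifting generators and exploiting freeness over $R$ under the collapse hypothesis).

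A minor point on (b): your appeal to an even invariant bilinear form to get $\fg_{\1}\cong\fg_{\1}^{*}$ is only valid for basic classical $\fg$; the strange series $\fp(n)$ has no such form and $\fg_{\1}$ is not self-dual there. The discrepancy between $\fg_{\1}$ and $\fg_{\1}^{*}$ in the theorem statement versus Theorem~\ref{T:(g,g_0)-coho} is best treated as a harmless identification in the paper rather than something requiring a separate argument. Otherwise your argument for (b) agrees with the paper's.
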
  

\begin{proof} (a) Set $R=\opH^{\bullet}(\fg,\fg_{\0},{\mathbb C})\cong S^{\bullet}(\fg_{\1})^{G_{\0}}$. 
First observe that if the spectral sequence collapses then one has an isomorphism of $R$-modules: 
\eq
\opH^{\bullet}(\fg,{\mathfrak a},{\mathbb C})\cong \opH^{\bullet}(\fg,\fg_{\0},{\mathbb C})\otimes \opH^{\bullet}(\fg_{\0},{\mathfrak a},{\mathbb C}).
\endeq
Furthermore, the ring $R$ identifies as a subring of $\opH^{\bullet}(\fg,\fg_{\0},{\mathbb C})\otimes \opH^{\bullet}(\fg_{\0},{\mathfrak a},{\mathbb C})$. 

In order to prove (a), we need to show that $\opH^{\bullet}(\fg_{\0},{\mathfrak a},{\mathbb C})$ can also be identified as a subring. One can then apply the argument in 
\cite[Section 3.1]{DNN12} to show that the cohomology ring identifies as the tensor product of rings. 
 
The cohomology $\opH^{\bullet}(\fg,{\mathfrak a},{\mathbb C})$ is obtained by the cochains 
\eq
C^{\bullet}(\fg,{\mathfrak a},{\mathbb C})=\Hom_{\mathfrak a}(\Lambda^{\bullet}(\fg/{\mathfrak a}),{\mathbb C}).
\endeq
Furthermore, using the notation in \cite[3.2]{Ma19}, we have a decomposition for $n\geq 0$: 
\eq
C^{n}(\fg,{\mathfrak a},{\mathbb C})=C^{n}(\fg,{\mathfrak a},{\mathbb C})_{(1)}\oplus C^{n}(\fg_{\0},{\mathfrak a},{\mathbb C}).
\endeq
where $C^{\bullet}(\fg,{\mathfrak a},{\mathbb C})_{(1)}$ and $C^{\bullet}(\fg_{\0},{\mathfrak a},{\mathbb C})$ are subcomplexes of 
$C^{n}(\fg,{\mathfrak a},{\mathbb C})$. This shows that $\opH^{\bullet}(\fg_{\0},{\mathfrak a},{\mathbb C})$ is a subring of 
$\opH^{\bullet}(\fg,{\mathfrak a},{\mathbb C})$ with $\opH^{\bullet}(\fg,{\mathfrak a},{\mathbb C})/F^{1}\opH^{\bullet}(\fg,{\mathfrak a},{\mathbb C})
\cong \opH^{\bullet}(\fg_{\0},{\mathfrak a},{\mathbb C})$.

(b) This follows immediately from (a) since $\opH^{\bullet}(\fg_{\0},{\mathfrak a},{\mathbb C})$ is finite-dimensional. Therefore, the elements in positive degree for this ring 
are nilpotent and contained in the radical of $\opH^{\bullet}(\fg,{\mathfrak a},{\mathbb C})$. 
\end{proof} 

As a application of the preceding theorem, one can use facts about the Kazhdan-Lusztig theory in the parabolic Category ${\mathcal O}$ for 
semisimple Lie algebras in order to compute other cohomology rings. 

\begin{cor} \label{C:cohoringcalc} Let $\fg$ be a classical simple Lie superalgebra where ${\mathfrak a}$ 
is one of the following subalgebras in the reductive Lie algebra $\fg_{\0}$:
\begin{itemize} 
\item[(a)] ${\mathfrak a}=\ft_{\0}$ is a maximal torus in $\fg_{\0}$;
\item[(b)] ${\mathfrak a}={\mathfrak l}_{\0}$ is a Levi subalgebra in $\fg_{\0}$;
\item[(c)] ${\mathfrak a}=\fb_{\0}$ is a Borel subalgebra in $\fg_{\0}$;
\item[(d)] ${\mathfrak a}={\mathfrak p}_{\0}$ is a parabolic subalgebra in $\fg_{\0}$.
\end{itemize} 
Then,
$$\opH^{\bullet}(\fg,{\mathfrak a},{\mathbb C})\cong \opH^{\bullet}(\fg,\fg_{\0},{\mathbb C})\otimes \opH^{\bullet}(\fg_{\0},{\mathfrak a},{\mathbb C})$$ 
as graded rings, and 
$$\operatorname{Spec}(\opH^{\bullet}(\fg,{\mathfrak a},{\mathbb C}))\cong \operatorname{Spec}(\opH^{\bullet}(\fg,\fg_{\0},{\mathbb C})) 
\cong \operatorname{Spec}(S^{\bullet}(\fg_{\1})^{G_{\0}})$$ 
as topological spaces. 
\end{cor}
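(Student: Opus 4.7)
The plan is to invoke Theorem~\ref{T:sscollapse}: both parts (a) and (b) of the corollary follow immediately once the spectral sequence
\begin{equation*}
E_2^{i,j} = \opH^i(\fg, \fg_\0, \CC) \otimes \opH^j(\fg_\0, \mathfrak{a}, \CC) \Longrightarrow \opH^{i+j}(\fg, \mathfrak{a}, \CC)
\end{equation*}
is shown to degenerate at $E_2$ for each of the four choices of $\mathfrak{a}$. The strategy is a parity-vanishing argument, split according to whether $\mathfrak{a}$ is reductive ($\ft_\0$ or $\mathfrak{l}_\0$) or parabolic ($\fb_\0$ or $\fp_\0$) inside $\fg_\0$.

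In the parabolic cases (c) and (d), the collapse is essentially automatic. Here $\fg_\0/\mathfrak{a}$ coincides with the opposite nilradical $\fn_\0^-$, all of whose $\ft_\0$-weights are strictly negative. Hence the only $\ft_\0$-invariant---and a fortiori the only $\mathfrak{a}$-invariant---in $(\Lambda^p \fn_\0^-)^*$ sits at $p = 0$, forcing $\opH^\bullet(\fg_\0, \mathfrak{a}, \CC) = \CC$ concentrated in degree $0$. The $E_2$-page is then supported on the row $j = 0$, every differential $d_r$ (which strictly raises $j$) vanishes, and Theorem~\ref{T:sscollapse} delivers both the ring isomorphism and the homeomorphism of spectra.

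In the reductive cases (a) and (b), the parity argument proceeds as follows. I would first identify $\opH^\bullet(\fg_\0, \mathfrak{a}, \CC)$ with the singular cohomology of $G_\0/B_\0$ (respectively $G_\0/P_\0$) via the Chevalley--Eilenberg theorem applied to the compact form of $G_\0$; these (partial) flag varieties admit Schubert decompositions into even-real-dimensional cells, so their cohomology is supported in even degrees. For the other factor, Theorem~\ref{T:(g,g_0)-coho}(a) identifies $\opH^i(\fg, \fg_\0, \CC) \cong S^i(\fg_\1^*)^{G_\0}$, and the requisite even-degree concentration reduces to a classical invariant-theoretic statement about the $G_\0$-module $\fg_\1$: for the type I series the decomposition $\fg_\1 = \fg_{+1} \oplus \fg_{-1}$ into dually paired $G_\0$-modules forces $G_\0$-invariants into even symmetric degrees, and the remaining classical simple families are handled case by case using the data of Appendix~\ref{S:QS}. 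With both factors supported in even degrees, $E_2^{i,j}$ is concentrated on $\{i + j \text{ even}\}$; each $d_r$ raises total degree by one and hence lands in the zero locus, so the spectral sequence collapses and Theorem~\ref{T:sscollapse} again yields both conclusions. The main obstacle is precisely the parity check for $S^\bullet(\fg_\1^*)^{G_\0}$: unlike the flag variety factor, it cannot be handled by a single uniform geometric argument and forces a case-by-case verification across the classical simple Lie superalgebras for which the corollary is claimed.
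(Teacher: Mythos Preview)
Your treatment of the parabolic cases (c) and (d) is correct and in fact cleaner than the paper's. You observe directly that the cochain complex $C^{p}(\fg_{\0},\fp_{\0},\CC)=\Hom_{\fp_{\0}}(\Lambda^{p}(\fn_{\0}^{-}),\CC)$ vanishes for $p\geq 1$ because $\Lambda^{p}(\fn_{\0}^{-})$ has no zero $\ft_{\0}$-weight, so $\opH^{\bullet}(\fg_{\0},\fp_{\0},\CC)=\CC$ in degree $0$ and the $E_{2}$-page sits on a single row. The paper instead runs an auxiliary spectral sequence and invokes the Kazhdan--Lusztig parity theorem to show merely that $\opH^{\bullet}(\fg_{\0},\fp_{\0},\CC)$ is concentrated in even degrees; your argument gives a stronger conclusion with far less machinery. (One small slip: $d_{r}$ has bidegree $(r,1-r)$, so it \emph{lowers} the $j$-index; your conclusion survives since the target then has $j<0$.)

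For the reductive cases (a) and (b), however, there is a genuine gap. Your total-degree parity argument requires \emph{both} tensor factors of $E_{2}$ to sit in even degrees, and you propose to verify this for $\opH^{i}(\fg,\fg_{\0},\CC)\cong S^{i}(\fg_{\1}^{*})^{G_{\0}}$ case by case. That verification fails already for the simple Lie superalgebra $\fpsq(n)$ with $n\geq 3$: here $\fg_{\0}\cong\fsl_{n}$ acts on $\fg_{\1}\cong\fsl_{n}$ by the adjoint representation, and Chevalley's theorem gives $S^{\bullet}(\fsl_{n}^{*})^{SL_{n}}=\CC[c_{2},c_{3},\dots,c_{n}]$ with $\deg c_{i}=i$, so there are invariants in odd degree. (Your ``dually paired'' heuristic for Type~I also breaks down for $\fp(n)$, where $\fg_{+1}=S^{2}V$ and $\fg_{-1}=\Lambda^{2}V^{*}$ are not dual.) The paper sidesteps this entirely: it uses only the even-degree concentration of the \emph{second} factor $\opH^{\bullet}(\fg_{\0},\fl_{\0},\CC)$, which is the cohomology of a (partial) flag variety and hence independent of $\fg_{\1}$. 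To repair your argument for (a) and (b) you must abandon the parity claim on $S^{\bullet}(\fg_{\1}^{*})^{G_{\0}}$ and instead argue, as the paper does, from the $q$-parity alone (combined, implicitly, with the multiplicative structure of the spectral sequence and the cochain-level splitting established in the proof of Theorem~\ref{T:sscollapse}, which makes the edge $E_{2}^{0,\bullet}$ consist of permanent cycles).
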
 

\begin{proof} It suffices to prove (b) and (d). (b) Note that $\opH^{\bullet}(\fg_{\0},{\mathfrak l}_{\0},{\mathbb C})$ is non-zero only in even degrees by looking at the cohomology of the trivial module 
for the parabolic Category ${\mathcal O}$ for the reductive Lie algebra $\fg_{\0}$. Therefore, the differentials in the spectral sequence 
Theorem~\ref{T:sscollapse} are equal to zero, and the spectral sequence collapses. This yields the result. 

(d) By using the preceding argument for (b), it suffices to show that $\opH^{\bullet}(\fg_{\0},{\mathfrak p}_{\0},{\mathbb C})$ is non-zero only in even degrees. 
Let ${\mathfrak l}_{\0}$ be the Levi subalgebra for ${\mathfrak p}_{\0}$ with ${\mathfrak p}_{\0}\cong {\mathfrak l}_{\0}\oplus {\mathfrak u}^{+}$. One has ${\mathfrak l}_{\0}\leq {\mathfrak p}_{\0} \leq \fg_{\0}$, 
and there exists 
a spectral sequence: 
\eq
E_{1}^{p,q}=\opH^{q}({\mathfrak p}_{\0},{\mathfrak l}_{\0},\Lambda^{p}((\fg_{\0}/{\mathfrak p}_{\0})^{*}))\Rightarrow 
\opH^{p+q}(\fg_{\0},{\mathfrak p}_{\0},{\mathbb C}).
\endeq
Then 
\eq
\begin{split} 
\opH^{q}({\mathfrak p}_{\0},{\mathfrak l}_{\0},\Lambda^{p}_{\0}((\fg_{\0}/{\mathfrak p}_{\0})^{*}))
	&\cong
	\operatorname{Ext}^{q}_{({\mathfrak p}_{\0},{\mathfrak l}_{\0})}({\mathbb C},\Lambda^{p}((\fg_{\0}/{\mathfrak p}_{\0})^{*}))
	\\
&\cong
	\operatorname{Ext}^{q}_{({\mathfrak p}_{\0},{\mathfrak l}_{\0})}(\Lambda^{p}(\fg_{\0}/{\mathfrak p}_{\0}),{\mathbb C})
	\\	
&\cong
	\operatorname{Ext}^{q}_{(\fg_{\0},{\mathfrak l}_{\0})}(U(\fg_{\0})\otimes_{U({\mathfrak p}_{\0})} \Lambda^{p}(\fg_{\0}/{\mathfrak p}_{\0}),{\mathbb C}).
\end{split} 
\endeq
The module $U(\fg_{\0})\otimes_{U({\mathfrak p}_{\0})} \Lambda^{p}(\fg_{\0}/{\mathfrak p}_{\0})$ has a filtration of Verma modules 
$Z_{\0}(\sigma)$ where $\sigma$ is a weight of $\Lambda^{p}(\fg_{\0}/{\mathfrak p}_{\0})$. The only factors for which 
$\operatorname{Ext}^{q}_{(\fg_{\0},{\mathfrak l}_{\0})}(Z_{\0}(\sigma),{\mathbb C})\neq 0$ is when $\sigma\in W\cdot 0$. Note that  
$\sigma=w\cdot 0$ implies that $l(w)=p$, and by the validity of the Kazhdan-Lusztig conjectures, $q$ and $l(w)=p$ have the same parity. 
Consequently, $p+q$ must be even for the terms in the spectral sequence to be non-zero. This proves that 
$\opH^{\bullet}(\fg_{\0},{\mathfrak p}_{\0},{\mathbb C})$ is non-zero only in even degrees. 
\end{proof} 

\section{Relating the categorical cohomology to the relative Lie algebra cohomology}

\subsection{} Let ${\mathfrak g}={\mathfrak n}^{-}\oplus {\mathfrak a} \oplus {\mathfrak n}^{+}$ where ${\mathfrak p}={\mathfrak a}\oplus {\mathfrak n}^{+}$. Moreover, 
let $\cO^\fp$ be the Category ${\mathcal O}$ for $U({\mathfrak g})$-modules as defined in Definition~\ref{def:O}, and $\cO^{\fp_{\0}}$ be the Category ${\mathcal O}$ for $U({\mathfrak g}_{\0})$-modules relative to the 
decomposition ${\mathfrak g}_{\0}={\mathfrak n}^{-}_{\0}\oplus {\mathfrak a}_{\0} \oplus {\mathfrak n}^{+}_{\0}$ where ${\mathfrak p}_{\0}={\mathfrak a}_{\0}\oplus {\mathfrak n}^{+}_{\0}$.
We first show that modules in $\cO^\fp$ restrict to $U({\mathfrak g}_{\0})$-modules in $\cO^{\fp_{\0}}$. 

\begin{lemma} \label{L:ptop0}
If $M \in \cO^\fp$, then $M \in \cO^{\fp_{\0}}$.
\end{lemma}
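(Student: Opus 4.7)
The plan is to verify the three defining conditions (O1)--(O3) of $\cO^{\fp_\0}$ in turn, using the analogous conditions for $\cO^\fp$ together with the PBW theorem for the Lie superalgebra $\fg$.

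First, for the finite generation condition (O1), I would invoke PBW to write
\[
U(\fg) \;\cong\; U(\fg_\0)\otimes \Lambda(\fg_\1)
\]
as left $U(\fg_\0)$-modules. Since $\fg$ is finite-dimensional, $\Lambda(\fg_\1)$ is a finite-dimensional vector space, so a finite set of $U(\fg)$-generators $v_1,\dotsc,v_k$ of $M$ produces a finite set of $U(\fg_\0)$-generators by multiplying the $v_i$ by a basis of $\Lambda(\fg_\1)$. This is the only step where something nontrivial happens, and it is the main (though still mild) obstacle: everything hinges on $\dim\fg_\1<\infty$.

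Second, for condition (O2), I would simply restrict scalars along $\fa_\0\hookrightarrow \fa$. If $M=\bigoplus_{i}V_i$ with each $V_i$ a finite-dimensional $\fa$-module, then each $V_i$ is still finite-dimensional when viewed as an $\fa_\0$-module, so the same decomposition exhibits $M$ as a direct sum of finite-dimensional $\fa_\0$-modules. No semisimplicity assumption on $\fa_\0$ is needed because the definition only requires a direct sum of finite-dimensional modules, not of simples.

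Third, for condition (O3), since $\fn^+_\0\subseteq\fn^+$ we have $U(\fn^+_\0)\subseteq U(\fn^+)$, so for each $m\in M$ the subspace $U(\fn^+_\0)\cdot m$ sits inside the finite-dimensional space $U(\fn^+)\cdot m$ and is therefore itself finite-dimensional. Combining the three verifications gives $M\in\cO^{\fp_\0}$, completing the argument.
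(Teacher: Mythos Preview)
Your proof is correct and follows essentially the same approach as the paper: verify (O1) via the PBW decomposition $U(\fg)\cong U(\fg_\0)\otimes\Lambda(\fg_\1)$ and finite-dimensionality of $\fg_\1$, then handle (O2) and (O3) by straightforward restriction along $\fa_\0\subseteq\fa$ and $\fn^+_\0\subseteq\fn^+$. The paper's phrasing differs only cosmetically (it speaks of $M$ as a quotient of a finite free $U(\fg)$-module rather than of explicit generators, and writes $U(\fg_\1)$ where you write $\Lambda(\fg_\1)$), but the content is identical.
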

\begin{proof} We employ Definition~\ref{def:O}. Let ${\mathfrak g}={\mathfrak n}^{-}\oplus {\mathfrak l} \oplus {\mathfrak n}^{+}$, ${\mathfrak g}_{\0}={\mathfrak n}^{-}_{\0}\oplus {\mathfrak l}_{\0} \oplus {\mathfrak n}^{+}_{\0}$. 
and let $M\in \cO^\fp$. Since $M$ is finitely generated over $U({\mathfrak g})$, $M$ is the homomorphic image of $\bigoplus_{i=1}^{n} U({\mathfrak g})$. As a left $U({\mathfrak g}_{\0})$-module, one 
has 
\eq
\bigoplus\nolimits_{i=1}^{n} U({\mathfrak g})
\cong \bigoplus\nolimits_{i=1}^{n} U({\mathfrak g}_{\0})\otimes U({\mathfrak g}_{\1})
\cong \bigoplus\nolimits_{j=1}^{m}U({\mathfrak g}_{\0}).
\endeq
This shows that $M$ is finitely generated as $U({\mathfrak g}_{\0})$-module. 

Next as a $U({\mathfrak l})$-module, $M\cong \bigoplus_{i=1}^{s} M_{i}$ where $M_{i}$ are finite-dimensional $U({\mathfrak l})$-modules. Each $M_{i}$ restricted to $U({\mathfrak l}_{\0})$ is a direct 
sum of finite-dimensional $U({\mathfrak l}_{\0})$-modules, which also is true for $M$. 

Finally, let $m\in M$. Since $M\in \cO^\fp$, one has $\dim U({\mathfrak n}^{+}).m <\infty$, thus $\dim U({\mathfrak n}^{+}_{\0}).m < \infty$. It follows that 
$M$ is locally ${\mathfrak n}^{+}_{\0}$-finite. Hence, $M \in \cO^{\fp_{\0}}$.
\end{proof} 

Since modules in  $\cO^{\fp_{\0}}$ have finitely many composition factors by Proposition~\ref{Cat O-Liealg prop}(a), one can state, justified by the preceding lemma, the following theorem about composition factors in $\cO^\fp$. 

\begin{theorem}\label{T:finitecompfactors} Let $V \in \cO^\fp$ where ${\mathfrak p}$ is a principal parabolic subalgebra. Then $V$ has only finitely many composition factors. 
\end{theorem}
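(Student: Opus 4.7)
The strategy is to reduce the claim to the known finite-length property of the classical parabolic Category $\cO^{\fp_{\0}}$ for the reductive Lie algebra $\fg_{\0}$. In outline, I would first restrict $V$ from a $U(\fg)$-module to a $U(\fg_{\0})$-module, observe that this restriction lies in $\cO^{\fp_{\0}}$, invoke the known finite length of objects there, and transfer the bound back to the super setting.

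More concretely, the first step is to apply Lemma~\ref{L:ptop0}, which is precisely the assertion that any $V\in \cO^\fp$ restricts to an object of $\cO^{\fp_{\0}}$. This step uses the principal parabolic hypothesis through the compatibility of the triangular decomposition $\fg = \fn^- \oplus \fl \oplus \fn^+$ with its even part $\fg_{\0} = \fn^-_{\0}\oplus \fl_{\0}\oplus \fn^+_{\0}$, and the induced decomposition $\fp_{\0} = \fl_{\0}\oplus \fn^+_{\0}$. With the restriction in hand, Proposition~\ref{Cat O-Liealg prop}(a) yields that $V$ has finite length as a $U(\fg_{\0})$-module; denote this length by $N$.

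To finish, I would note that every $U(\fg)$-submodule of $V$ is a fortiori a $U(\fg_{\0})$-submodule, so any strictly ascending chain of $U(\fg)$-submodules is a strictly ascending chain of $U(\fg_{\0})$-submodules. In particular, if $0 = V_0 \subsetneq V_1 \subsetneq \dots \subsetneq V_m = V$ is any chain of $U(\fg)$-submodules then $m \leq N$. Hence $V$ is both Noetherian and Artinian over $U(\fg)$ and therefore admits a $U(\fg)$-composition series of length at most $N$, as required.

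I do not expect any serious obstacle here: the substantive content is absorbed entirely into Lemma~\ref{L:ptop0} and Proposition~\ref{Cat O-Liealg prop}(a). The only point that asks for a moment's care is the passage from the $U(\fg_{\0})$-length bound to the $U(\fg)$-length bound, but this is immediate from the fact that each proper inclusion of $U(\fg)$-submodules remains a proper inclusion of $U(\fg_{\0})$-submodules. Consequently, the principal parabolic hypothesis, together with the classical finite-length result for parabolic Category $\cO$ over reductive Lie algebras, forces finite length in $\cO^\fp$.
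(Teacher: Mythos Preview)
Your proof is correct and follows exactly the paper's approach: invoke Lemma~\ref{L:ptop0} to place $V$ in $\cO^{\fp_{\0}}$, then use Proposition~\ref{Cat O-Liealg prop}(a). The only difference is that you spell out the passage from finite $U(\fg_{\0})$-length to finite $U(\fg)$-length via the submodule-chain argument, whereas the paper leaves this step implicit.
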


\subsection{Comparison between ${\mathcal O}^{\mathfrak p}$ and ${\mathcal O}^{{\mathfrak p}_{\0}}$} \label{SS:Comparison} 

Let $\Lambda$ index the simple ${\mathcal O}^{\mathfrak p}$-modules, and for $\lambda\in \Lambda$, let 
$L(\lambda)$ be the corresponding simple module. Moreover, let $P(\lambda)$ be the projective cover of $L(\lambda)$. 
Similarly, let $\Lambda_{\0}$ index the simple modules in ${\mathcal O}^{{\mathfrak p}_{\0}}$, and for $\sigma\in \Lambda_{\0}$, let 
$L_{\0}(\sigma)$ be the corresponding simple module, and $P_{\0}(\sigma)$ be its projective cover.  The next proposition shows that there are enough projectives in 
${\mathcal O}^{\mathfrak p}$. 

\begin{prop}\label{P:projprop} Let $\lambda\in \Lambda$ and $\sigma\in \Lambda_{\0}$ with ${\mathfrak p}$ a principal parabolic of ${\mathfrak g}$. 
\begin{itemize} 
\item[(a)] $U({\mathfrak g})\otimes_{U({\mathfrak g}_{\0})}P_{\0}(\sigma)$ is a projective module in ${\mathcal O}^{\mathfrak p}$. 
\item[(b)] $(U({\mathfrak g})\otimes_{U({\mathfrak g}_{\0})}P_{\0}(\sigma):P(\lambda))=[L(\lambda):L_{\0}(\sigma)]$. 
\end{itemize}
In part (b), the left hand side of the equation measures how many times a projective summand occurs, and the right hand side is the $U({\mathfrak g}_{\0})$ composition factor multiplicity. 
Note the composition multiplicity is finite since $L(\lambda)$ upon restriction to $U({\mathfrak g}_{\0})$ is in ${\mathcal O}^{{\mathfrak p}_{\0}}$. 
\end{prop}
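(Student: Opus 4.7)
The plan is to establish both parts via Frobenius reciprocity for the inclusion $U(\fg_{\0}) \hookrightarrow U(\fg)$, using Lemma \ref{L:ptop0} to move between the two categories. Write $M := U(\fg) \otimes_{U(\fg_{\0})} P_{\0}(\sigma)$ throughout.

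For part (a), I would first verify that $M$ actually lies in $\cO^\fp$. The PBW theorem identifies $M \cong \Lambda(\fg_{\1}) \otimes_{\CC} P_{\0}(\sigma)$ as a $U(\fg_{\0})$-module, with $\fg_{\0}$ acting diagonally (adjoint on $\Lambda(\fg_{\1})$, given on $P_{\0}(\sigma)$). Finite generation (O1) is immediate since $P_{\0}(\sigma)$ is finitely generated over $U(\fg_{\0})$. Local $\fn^+$-finiteness (O3) and the $\fl$-semisimplicity (O2) then follow because tensoring with the finite-dimensional module $\Lambda(\fg_{\1})$ preserves these properties over $\fg_{\0}$, and each resulting finite-dimensional $\fl_{\0}$-submodule extends to a finite-dimensional $\fl$-submodule using the finite-dimensionality of $\fl_{\1}$ together with the principal parabolic structure.

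With $M \in \cO^\fp$ in hand, for any $N \in \cO^\fp$ Frobenius reciprocity yields
\[
\Hom_{\cO^\fp}(M, N) \cong \Hom_{U(\fg_{\0})}(P_{\0}(\sigma), N).
\]
By Lemma \ref{L:ptop0}, the restriction of $N$ to $\fg_{\0}$ lies in $\cO^{\fp_{\0}}$, so the right-hand side is Hom in $\cO^{\fp_{\0}}$. Since $P_{\0}(\sigma)$ is projective in $\cO^{\fp_{\0}}$, the functor $\Hom_{\cO^{\fp_{\0}}}(P_{\0}(\sigma), -)$ is exact. The restriction functor $\cO^\fp \to \cO^{\fp_{\0}}$ is exact (as $U(\fg)$ is free over $U(\fg_{\0})$), so the composition is exact, proving that $M$ is projective in $\cO^\fp$.

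For part (b), since $\Hom(P(\mu), L(\lambda)) = \delta_{\mu,\lambda}\CC$, the summand multiplicity satisfies $(M : P(\lambda)) = \dim \Hom_{\cO^\fp}(M, L(\lambda))$. Applying Frobenius once more gives
\[
\dim \Hom_{\cO^\fp}(M, L(\lambda)) = \dim \Hom_{\cO^{\fp_{\0}}}(P_{\0}(\sigma), L(\lambda)).
\]
By Lemma \ref{L:ptop0} the restriction of $L(\lambda)$ lies in $\cO^{\fp_{\0}}$, hence has finite length by Proposition \ref{Cat O-Liealg prop}(a). Since $P_{\0}(\sigma)$ is the projective cover of $L_{\0}(\sigma)$, a standard induction on composition length yields that this Hom dimension equals $[L(\lambda) : L_{\0}(\sigma)]$, which simultaneously proves finiteness of the multiplicity.

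I expect the main obstacle to be the verification that $M \in \cO^\fp$, particularly condition (O2): the $\fl_{\0}$-structure is transparent from PBW, but promoting it to a decomposition into finite-dimensional $\fl$-modules requires careful use of the principal parabolic hypothesis and the finite-dimensionality of $\fl_{\1}$. Everything else is a formal application of Frobenius reciprocity and the finiteness results for $\cO^{\fp_{\0}}$ recalled in Proposition \ref{Cat O-Liealg prop}.
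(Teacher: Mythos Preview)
Your proposal is correct and follows essentially the same Frobenius reciprocity argument as the paper: both identify $\Hom_{\cO^\fp}(M,-)\cong\Hom_{\cO^{\fp_{\0}}}(P_{\0}(\sigma),-)$ and then read off projectivity and the multiplicity formula. The only difference is that you take the extra care to verify $M\in\cO^\fp$ explicitly, whereas the paper takes this for granted; your caution about condition (O2) is reasonable but not a genuine obstacle.
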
 

\begin{proof} (a) Consider the functor $\text{Hom}_{{\mathcal O}^{\mathfrak p}}(U({\mathfrak g})\otimes_{U({\mathfrak g}_{\0})}P_{\0}(\sigma),-)$ on objects in 
${\mathcal O}^{\mathfrak p}$. One has the following natural isomorphisms: 
\eq
\begin{split} 
\text{Hom}_{{\mathcal O}^{\mathfrak p}}(U({\mathfrak g})\otimes_{U({\mathfrak g}_{\0})}P_{\0}(\sigma),-)&\cong 
\text{Hom}_{U({\mathfrak g})}(U({\mathfrak g})\otimes_{U({\mathfrak g}_{\0})}P_{\0}(\sigma),-)
\\
&\cong \text{Hom}_{U({\mathfrak g}_{\0})}(P_{\0}(\sigma),-)
\\
&\cong \text{Hom}_{{\mathcal O}^{{\mathfrak p}_{\0}}}(P_{\0}(\sigma),-). 
\end{split} 
\endeq
Since $P_{\0}(\sigma)$ is projective in ${\mathcal O}^{{\mathfrak p}_{\0}}$, it follows that  $\text{Hom}_{{\mathcal O}^{\mathfrak p}}(U({\mathfrak g})\otimes_{U({\mathfrak g}_{\0})}P_{\0}(\sigma),-)$ is 
exact, and $U({\mathfrak g})\otimes_{U({\mathfrak g}_{\0})}P_{\0}(\sigma)$ is a projective module in ${\mathcal O}^{\mathfrak p}$.

(b) One can relate these multiplicities by using the dimensions of Hom-spaces: 
\begin{equation}
\begin{split}
(U({\mathfrak g})\otimes_{U({\mathfrak g}_{\0})}P_{\0}(\sigma):P(\lambda))
&=  \dim \text{Hom}_{{\mathcal O}^{\mathfrak p}}(U({\mathfrak g})\otimes_{U({\mathfrak g}_{\0})}P_{\0}(\sigma),L(\lambda)) 
\\
&=  \dim \text{Hom}_{U({\mathfrak g})}(U({\mathfrak g})\otimes_{U({\mathfrak g}_{\0})}P_{\0}(\sigma),L(\lambda)) 
\\
&=\dim \text{Hom}_{U({\mathfrak g}_{\0})}(P_{\0}(\sigma),L(\lambda)) 
\\ 
&= \dim \text{Hom}_{{\mathcal O}^{{\mathfrak p}_{\0}}}(P_{\0}(\sigma),L(\lambda)) 
\\ 
&= [L(\lambda):L_{\0}(\sigma)]. \qedhere
\end{split}
\end{equation}
\end{proof} 

A similar argument shows that there are enough injectives in ${\mathcal O}^{\mathfrak p}$. 

\subsection{A Spectral Sequence}\label{SS:spectralseqcat} Consider the left exact functor 
\eq\label{def:F}
\cF: \cC_{(\fg, \fa_\0)} \to \cO^\fp,
\quad M \mapsto \textup{largest submodule of }M\textup{ in }\cO^\fp.
\endeq 

\begin{prop} \label{P:split} Assume that  for all finite-dimensional $U({\mathfrak a}_{\0})$-modules $M$ and $N$, $\operatorname{Ext}^{1}_{U({\mathfrak a}_{\0})}(M,N)=0$. Then the functor ${\mathcal F}$ takes relative injective objects in $\cC_{(\fg, \fa_\0)}$ to injective objects in $\cO^\fp$. 
\end{prop} 

\begin{proof} A relative injective in $\cC_{(\fg, \fa_\0)}$ is a summand of $I=\text{Hom}_{U({\mathfrak a}_{\0})}(U({\mathfrak g}),T)$ where $T$ is a finite-dimensional ${\mathfrak a}_{\0}$-module. So it 
suffices to show that ${\mathcal F}(I)$ is injective in $\cO^\fp$, or equivalently $\text{Hom}_{\cO^\fp}(-,{\mathcal F}(I))$ is exact. From using adjointness and Frobenius reciprocity, one has  
\eq
\text{Hom}_{\cO^\fp}(-,{\mathcal F}(I))\cong \text{Hom}_{U({\mathfrak g})}(-,I)\cong \text{Hom}_{U({\mathfrak a}_{0})}(-,T).
\endeq
Now the exactness follows using the hypothesis, since any object in $\cO^\fp$ is a direct sum of finite-dimensional $U({\mathfrak a}_{\0})$-modules and $T$ is a finite-dimensional $U({\mathfrak a}_{\0})$-module. 
\end{proof}

The preceding proposition enables us to construct a spectral sequence that relates categorical cohomology in $\cO^\fp$ to 
relative $({\mathfrak g},{\mathfrak a}_{\0})$ cohomology. 

\begin{theorem} \label{thm:E2OpRj} Let ${\mathfrak a}$ satisfy the conditions of Proposition~\ref{P:split}. Then there exists a spectral sequence 
\begin{equation} \label{E:spectralCatO}
E_2^{i,j} = \Ext^i_{\cO^\fp}(M, R^j\cF(N)) \Rightarrow \Ext^{i+j}_{({\mathfrak g},{\mathfrak a}_{\0})}(M,N)
\end{equation}
where $M, N\in \cC_{(\fg, \fa_\0)}$.
\end{theorem}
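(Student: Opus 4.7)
The plan is to produce the spectral sequence as the Grothendieck spectral sequence for the composition of left exact functors
\[
\cC_{(\fg, \fa_\0)} \xrightarrow{\cF} \cO^\fp \xrightarrow{\Hom_{\cO^\fp}(M, -)} \mathrm{Vect}_\CC .
\]
The two essential ingredients are (i) an adjunction identifying this composite with $\Hom_{U(\fg)}(M, -)$ restricted to $\cC_{(\fg, \fa_\0)}$, and (ii) the fact, supplied by Proposition~\ref{P:split}, that $\cF$ sends injectives in $\cC_{(\fg, \fa_\0)}$ to $\Hom_{\cO^\fp}(M,-)$-acyclics in $\cO^\fp$.

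First I would verify that $\cF$ is right adjoint to the inclusion functor $\iota: \cO^\fp \hookrightarrow \cC_{(\fg, \fa_\0)}$. For $M \in \cO^\fp$ and $N \in \cC_{(\fg, \fa_\0)}$, any $U(\fg)$-morphism $\phi: M \to N$ has image isomorphic to a quotient of $M$, and I would check that the defining conditions (O1)--(O3) are preserved by such quotients. Hence $\mathrm{im}(\phi) \subseteq \cF(N)$, which is by definition the largest submodule of $N$ lying in $\cO^\fp$, and $\phi$ factors uniquely through $\cF(N)$. This yields the natural bijection
\[
\Hom_{\cC_{(\fg, \fa_\0)}}(\iota M, N) \xrightarrow{\sim} \Hom_{\cO^\fp}(M, \cF(N)),
\]
so $\Hom_{\cO^\fp}(M, -) \circ \cF = \Hom_{U(\fg)}(M, -)$ as functors on $\cC_{(\fg, \fa_\0)}$.

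With Proposition~\ref{P:split} providing acyclicity on injectives, the Grothendieck spectral sequence applies and produces
\[
E_2^{i,j} = \Ext^i_{\cO^\fp}\bigl(M, R^j \cF(N)\bigr) \Longrightarrow R^{i+j}\bigl(\Hom_{U(\fg)}(M, -)\bigr)(N).
\]
To identify the abutment with $\Ext^{i+j}_{(\fg, \fa_\0)}(M, N)$, I would fix a $(U(\fg), U(\fa_\0))$-injective resolution $N \hookrightarrow I^\bullet$, and note that each $I^k$ can be chosen to lie in $\cC_{(\fg, \fa_\0)}$ since the standard coinduced injectives $\Hom_{U(\fa_\0)}(U(\fg), T)$ with $T$ finite-dimensional are automatically $\fa_\0$-locally finite. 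Then $\Hom_{U(\fg)}(M, I^\bullet)$ simultaneously computes the derived functor on the ambient category and, by definition, the relative Ext groups.

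The main obstacle is step (i): while (O1) (finite generation) and (O3) (local $\fn^+$-finiteness) pass easily to quotients, the preservation of (O2) --- being a \emph{direct sum} of finite-dimensional $\fa$-modules rather than merely locally $\fa$-finite --- requires genuine semisimplicity of $\fa$ on finite-dimensional modules, which is exactly the content of the hypothesis in Proposition~\ref{P:split}. Once this decomposition property is settled, so that $\cF$ is well-defined and genuinely right adjoint to $\iota$, the rest of the argument proceeds along standard Grothendieck-spectral-sequence lines, with no further superalgebra input needed.
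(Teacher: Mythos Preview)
Your proposal is correct and follows essentially the same route as the paper: both obtain the spectral sequence as a Grothendieck spectral sequence for the composite $\Hom_{\cO^\fp}(M,-)\circ\cF$, invoking the adjunction $\Hom_{\cO^\fp}(M,\cF(-))\cong\Hom_{U(\fg)}(M,-)$ and Proposition~\ref{P:split} for the injective-to-injective (hence acyclicity) condition. The paper is terser, packaging the relative-homological-algebra verification (that all morphisms involved are $\fa_\0$-split, so the relevant derived functors and the composite-functor spectral sequence exist) into a citation of \cite[Theorem~5.1.1]{Lo25}, whereas you spell out the adjunction and the identification of the abutment via an explicit $(U(\fg),U(\fa_\0))$-injective resolution; these are the same argument at different levels of detail.
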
 

\begin{proof}  We will proceed as in Proposition~\ref{P:ExtC}.  Let $F:\cC_{(\fg, \fa_\0)}\rightarrow \cO^\fp$ be the functor 
$F(-)=\text{Hom}_{\cO^\fp}(M,-)$, and $F^{\prime}:\cC_{(\fg, \fa_\0)}\rightarrow \cO^\fp$ be the functor 
$F^{\prime}(-)={\mathcal F}(-)$. 

Using the assumption, in all the categories involved, one has that the morphisms are ${\mathfrak a}_{\0}$-split, so the conditions of \cite[Theorem 5.1.1]{Lo25} hold. Consequently, one has a 
spectral sequence  
\eq
E_{2}^{i,j}=R^{i}_{\cO^\fp}F^{\prime}(R^{j}_{({\mathfrak g},{\mathfrak a}_{\0})}F(N))\Rightarrow \text{Ext}^{i+j}_{({\mathfrak g},{\mathfrak a}_{\0})}(M,N)
\endeq
which proves the result.  
\end{proof} 

Let $\Lambda$ (resp. $\Lambda_{\0}$) with the indexing of the corresponding simple and projective modules be as in Section~\ref{SS:Comparison}. For $\lambda\in \Lambda$, $j\geq 0$ and $N\in 
\cC_{(\fg, \fa_\0)}$,  
\begin{equation} 
[R^j\cF(N): L(\lambda)] = \dim \Ext^j_{(\fg,\fa_\0)}(P(\lambda), N). 
\end{equation}
This follows directly from the spectral sequence (\ref{E:spectralCatO}) by setting $M=P(\lambda)$ and observing that the spectral sequence collapses to yield 
$\text{Hom}_{\cO^\fp}(P(\lambda), R^j\cF(N))\cong \Ext^{j}_{({\mathfrak g},{\mathfrak p}_{\0})}(P(\lambda),N)$.

\subsection{Proof of Theorem~\ref{T:cohomology}}

We begin by proving that the higher right derived functor of ${\mathcal F}$ vanish on objects in $\cO^\fp$. 
\prop\label{prop:RjF0}
For $j>0$, $R^j\cF(N) = 0$ for $N \in \cO^\fp$ where ${\mathfrak p}$ is principal. 
\endprop
\proof

Let $M= P(\lambda)$ be the projective cover of $L(\lambda)$ in $\cO^\fp$. The spectral sequence collapses, and thus for $n\geq 0$,
\eq \label{E:hom-extiso}
\Hom_{\cO^\fp}(P(\lambda), R^j\cF(N)) \cong \Ext^j_{(\fg,\fl_\0)}(P(\lambda), N) .
\endeq
It remains to show that $\Ext^j_{(\fg,\fl_\0)}(P(\lambda), N) = 0$ for $j \geq 0$ and $\lambda\in \Lambda$.

First, from Proposition~\ref{P:projprop}, $U(\fg)\otimes_{U(\fg_\0)} P_{\0}(\sigma)$ is projective in ${\mathcal O}^{\mathfrak p}$ for all $\sigma\in \Lambda_{\0}$. Moreover, 
for some $\sigma\in \Lambda_{\0}$, $P(\lambda)$ is a direct summand of  $U(\fg)\otimes_{U(\fg_\0)} P_{\0}(\sigma)$. For $j\geq 0$, applying (\ref{E:hom-extiso}) 
\eq
\begin{split} 
[R^j\cF(N): L(\lambda)] &= \dim \Ext^j_{(\fg,\fl_\0)}(P(\lambda), N) 
\\
&\leq \dim \Ext^j_{(\fg,\fl_\0)}(U(\fg) \otimes_{U(\fg_\0)} P_{\0}(\sigma), N)
\\
&=\dim \Ext^j_{(\fg_{\0},\fl_\0)}( P_{\0}(\sigma), N)
\\
&=\dim \Ext^j_{{\mathcal O}^{{\mathfrak p}_{\0}}}( P_{\0}(\sigma), N)
= 0.
\end{split} 
\endeq
Therefore, $R^j\cF(N)=0$ for $j>0$. 
\endproof

We can now present a proof of Theorem~\ref{T:cohomology}. 

\begin{proof} For $M,N \in \cO^\fp$, one has a spectral sequence
\eq
E_2^{i,j} = \Ext^i_{\cO^\fp}(M, R^j\cF(N)) \Rightarrow \Ext^{i+j}_{(\fg, \fl_\0)}(M,N).
\endeq
By Proposition~\ref{prop:RjF0}, $R^j\cF(N)=0$ for $j>0$. Therefore, the spectral sequence collapses, and for $n\geq 0$; 
\eq
\Ext^n_{(\fg, \fl_\0)}(M,N) \cong \Ext^n_{\cO^\fp} (M, \cF(N)) \cong \Ext^n_{\cO^\fp}(M,N).\qedhere
\endeq
\end{proof}

\subsection{Proof of Theorem~\ref{T:finite generation}} \label{sec:proofB}
For (a), first note by Theorem~\ref{T:cohomology}, one has $R=\Ext^{\bullet}_{{\cO^\fp}}(\mathbb{C},\mathbb{C})\cong \opH^{\bullet}({\mathfrak g},{\mathfrak l}_{\0},\mathbb C)$. Now by 
Corollary~\ref{C:cohoringcalc}(b), 
\eq
\opH^{\bullet}({\mathfrak g},{\mathfrak l}_{\0},\mathbb C)\cong S^{\bullet}({\mathfrak g}_{\1}^{*})^{G_{\0}}\otimes \opH^{\bullet}(\fg_{\0},{\mathfrak l}_{\0},{\mathbb C}).
\endeq
For (b), it follows by combining Theorem~\ref{T:cohomology} and Theorem~\ref{T:cohomologycalc}(b). 
For (c), first observe that if 
$M$ is finite-dimensional and $N$ is in $\cO^\fp$ then 
\eq
\Ext^{\bullet}_{\cO^\fp}(M,N)\cong \Ext^{\bullet}_{\cO^\fp}(\mathbb C,M^{*}\otimes N)
\endeq 
with $M^{*}\otimes N$ in $\cO^\fp$.  According to Theorem~\ref{T:cohomologycalc}(c), it suffices to prove that  
$C^{\bullet}({\mathfrak g},{\mathfrak g}_{\0},\widehat{N})$ is a finitely generated module over $R=S^{\bullet}({\mathfrak g}_{\1}^{*})^{G_{\0}}$ 
where $\widehat{N}$ is in $\cO^\fp$. For ${\mathfrak g}$ classical simple, it was shown in \cite[Theorem 3.4]{BKN10a} that as an $R$-module one has a harmonic decomposition 
\begin{equation}
S^{\bullet}({\mathfrak g}_{\1}^{*})\cong S^{\bullet}({\mathfrak g}_{\1}^{*})^{G_{\0}} \otimes [\text{ind}_{H}^{G_{\0}} {\mathbb C} ]_{\bullet}. 
\end{equation} 
Therefore, as $R$-modules, 
\begin{equation} 
C^{\bullet}({\mathfrak g},{\mathfrak g}_{\0},M)\cong \text{Hom}_{U({\mathfrak g}_{\0})}(S^{\bullet}({\mathfrak g}_{\1}),\widehat{N})\cong 
S^{\bullet}({\mathfrak g}_{\1}^{*})^{G_{\0}}\otimes \text{Hom}_{U({\mathfrak g}_{\0})}({\mathbb C},[\text{ind}_{H}^{G_{\0}} {\mathbb C}]_{\bullet} \otimes \widehat{N}). 
\end{equation} 
It suffices to show that $\text{Hom}_{U({\mathfrak g}_{\0})}({\mathbb C},[\text{ind}_{H}^{G_{\0}} {\mathbb C }] \otimes \widehat{N})$ is finite-dimensional. 

Set ${\mathfrak h}=\text{Lie }H$. Observe that $\text{ind}_{H}^{G_{\0}} {\mathbb C }$ is locally finite induction, so there exists an injective homomorphism 
$\text{ind}_{H}^{G_{\0}} {\mathbb C }\hookrightarrow \text{Hom}_{U({\mathfrak h})}(U({\mathfrak g}),{\mathbb C})$. This induces an injective map 
\eq
\text{Hom}_{U({\mathfrak g}_{\0})}({\mathbb C},[\text{ind}_{H}^{G_{\0}} {\mathbb C }] \otimes \widehat{N})\hookrightarrow 
 \text{Hom}_{U({\mathfrak g}_{\0})}({\mathbb C}, [\text{Hom}_{U({\mathfrak h})}(U({\mathfrak g}),{\mathbb C})]\otimes \widehat{N}).
 \endeq
 Now by Frobenius reciprocity, 
 \eq
 \text{Hom}_{U({\mathfrak g}_{\0})}({\mathbb C}, [\text{Hom}_{U({\mathfrak h})}(U({\mathfrak g}),{\mathbb C})]\otimes \widehat{N})
 \cong \text{Hom}_{U({\mathfrak h})}({\mathbb C}, \widehat{N}).
 \endeq
Let $\widehat{T}$ be a torus in $H$. Since $\widehat{N}$ is a rational $T$-module, one has an injective map 
\eq
\text{Hom}_{U({\mathfrak h})}({\mathbb C}, \widehat{N})\hookrightarrow \text{Hom}_{U(\hat{\mathfrak t})}({\mathbb C}, \widehat{N})\cong 
\text{Hom}_{\widehat{T}}({\mathbb C}, \widehat{N}).
\endeq 
With this series of injective maps, one is reduced to showing that $\text{Hom}_{\widehat{T}}({\mathbb C}, \widehat{N})$ is finite-dimensional. 

Since $\widehat{N}$ has a finite composition series in $\cO^\fp$ and in $\cO^{\fp_{\0}}$, we can assume without loss of generality that 
$\widehat{N}$ is simple, or more generally that $\widehat{N}$ is quotient of a parabolic Verma module in $\cO^{\fp_{\0}}$. The weights of 
$\widehat{N}$ are in the cone $\lambda+{\mathbb N}\Phi_{\0}^{-}$ (with finite multiplicities) for some weight $\lambda$. Hence, we are reduced to showing that 
for any given $T$-weight $\lambda$, there are finitely many $T$-weights in ${\mathbb N}\Phi_{\0}^{+}$ that restrict to $\lambda$ under $\widehat{T}$. Note that this is true, if $\widehat{T}=T$ is a maximal torus of $G_{\0}$, 
but here $\widehat{T}$ can be a smaller torus. 

The proof is finished off by using the data in the Appendix, where the argument proceeds in a case by case manner. For each almost simple Lie algebra (see Section \ref{sec:almostsimple}) the torus $\widehat{T}$ is identified, and a grading on the 
roots is specified, in order to verify the aforementioned claim about finite-dimensionality. 

\section{Standard Stratification of $\cO^\fp$}

One of the main ideas of this section is to extend work of Holmes and Nakano \cite{HN91} for finite-dimensional graded algebras and their finite-dimensional modules to the setting of infinite-dimensional 
modules in ${\mathcal O}^{\mathfrak p}$. We will indicate places where several of the results in \cite{HN91} can be used. 

In the past, many of the approaches to proving that the categories for Lie superalgebras are stratified were done on a case by case basis. In the following sections, we provide 
a uniform treatment to prove stratification for a wide array of examples. 

 \subsection{Ext Orthogonality}
Suppose that $\fp$ is a principal parabolic subalgebra corresponding to the principal parabolic subset $P_\cH$ for some functional $\cH\in V^*$.
Then, $\fp = \fa \oplus \fn^+$, where $\fa = \bigoplus_{\alpha \in \Phi^0_\cH} {\mathfrak g}_{\alpha}$ is the corresponding ``Levi subalgebra".
 
Consider the category ${\mathcal C}_{({\mathfrak a},{\mathfrak a}_{\0})}$. Since $\fp$ is a principal parabolic subalgebra, ${\mathfrak a}_{\0}=
{\mathfrak l}_{\0}$ where ${\mathfrak l}_{\0}={\mathfrak l}_{J}$ is a Levi subalgebra for ${\mathfrak g}_{\0}$. The finite-dimensional modules for ${\mathfrak l}_{\0}$ are indexed by 
dominant integral weights $X_{J,+}$ (i.e., dominant weights on, $J$, a subset of simple roots). 

For each $\sigma \in X_{J,+}$, set $M(\sigma)=U({\mathfrak a})\otimes_{U({\mathfrak a}_{\0})} L_{J}(\sigma)$. For a simple ${\mathcal C}_{({\mathfrak a},{\mathfrak a}_{\0})}$-module, 
$S$, let $P(S)$ be its projective cover. Then $P(S)$ is a direct summand of $M(\sigma)$ for some $\sigma$ and $\cH(\gamma)=\cH(\sigma)$ for any weight $\gamma$ in $M(\sigma)$, 
thus any weight of $S$. Note a similar argument shows that if $I(S)$ is the injective hull of $S$ then $\cH(\gamma)$ is constant for all weights $\gamma$ of $I(S)$. 

Let $\Lambda_{\mathfrak a}$ index the non-isomorphic simple $\fa$-modules in $\cC_{(\fa, \fa_\0)}$. For $\lambda_{0}\in \Lambda_{\mathfrak a}$, let $L_{\mathfrak a}(\lambda_{0})$ be the 
corresponding simple module and $I_{\mathfrak a}(\lambda_{0})$ be its injective hull.  Any simple module in $\cO^\fp$ can be realized as a quotient of $U({\mathfrak g})\otimes _{U({\mathfrak p})} L_{\mathfrak a}(\lambda_{0})$ for some 
 $\lambda_{0}\in \Lambda_{\mathfrak a}$. One can now use the argument in \cite[Section 3]{HN91} to show that the simples in $\cO^\fp$ are also indexed by $\Lambda_{{\mathfrak a}}$ (before considering 
 parity). Set $\Lambda:= \Lambda_{\mathfrak a} \times \{0,1\}$. For any $\lambda=(\lambda_{0},i)\in \Lambda$, let $L(\lambda)$ be the corresponding simple module in $\cO^\fp$. 
 Here it is important to remember that $\lambda_{0}$ and $\lambda$ should not be regarded as weights.  

For any $\sigma\in {\mathfrak t}^{*}$ where ${\mathfrak t}$ is a maximal torus in ${\mathfrak a}_{\0}$, let $\overline{\sigma}:= \cH(\sigma)$. Then, ${\mathfrak t}^{*}$ has a proset structure given by
\eq
\sigma_{1} \leq \sigma_{2} \in {\mathfrak t}^{*} 
\quad \Leftrightarrow \quad
\overline{\sigma}_{1} \leq \overline{\sigma}_{2} \in \RR.
\endeq
Now one can put a proset structure on $\Lambda$ as follows. Let $\mu, \lambda \in \Lambda$ and $\sigma_{1}, \sigma_{2}$ be weights such that 
$I_{\mathfrak a}(\mu_{0})$ (resp. $I_{\mathfrak a}(\lambda_{0})$) has weights $\gamma$ such that $\overline{\gamma}=\overline{\sigma}_{1}$ (resp. $\overline{\gamma}=\overline{\sigma}_{2}$). 
Then $\Lambda:= \Lambda_0 \times \{0,1\}$ is a proset via
\eq\label{eq:Ofaposet}
(\mu_{0}, i) \leq (\lambda_{0}, j) \in \Lambda \quad \Leftrightarrow \quad i=j, \ \sigma_{1}  \leq \sigma_{2} \quad \Leftrightarrow  \quad i=j,\ \overline{\sigma}_{1}  \leq \overline{\sigma}_{2}.
\endeq

Recall the functor $\cF$ from \eqref{def:F}.
For $\lambda =(\lambda_0, i)\in \Lambda$, 
denote the (co)induced modules\footnote{The construction of the intermediate modules of this type first appeared in the second author's Ph.D thesis in 1990 (cf. \cite[p.7]{N92}). 
These modules were used to study the representation theory of Lie algebras of Cartan type.} from relative injectives and irreducibles, respectively, by 
 \eq\label{eq:ninjDirr}
\ninj(\lambda) := \Pi^i \cF(\Hom_{U(\fp^-)} (U(\fg), I_\fa(\lambda_0))),
\quad
\Dirr(\lambda) := \Pi^i U(\fg) \otimes_{U(\fp)} L_\fa(\lambda_0).
 \endeq
 
We start by establishing an important Ext-orthogonality property between these modules.

\prop\label{prop:exto}
Let $\lambda, \mu \in \Lambda$. Then,
\[
\Ext^n_{\cO^\fp}(\Dirr(\lambda), \ninj(\mu)) = \begin{cases} 
\CC &\tif n=0,\  \lambda = \mu;
\\
0 &\textup{otherwise}.
\end{cases}
\]
\endlem
\proof
Let $M:= \Hom_{U(\fp^-)} (U(\fg), I_\fa(\mu_{0}))$.
Consider the spectral sequence from Theorem~\ref{thm:E2OpRj}:
\eq\label{eq:EOSS1}
E_2^{i,j} = \Ext^i_{\cO^\fp}(\Dirr(\lambda), R^j\cF(M)) \Rightarrow \Ext^{i+j}_{(\fg, \fa_\0)}(\Dirr(\lambda), M).
\endeq
We want to show that $R^j\cF(M) = 0$ for $j > 0$ (note that $M$ does not necessarily lie in $\cO^\fp$ so Proposition~\ref{prop:RjF0} does not apply).
If this happens, then the spectral sequence \eqref{eq:EOSS1} collapses, and thus
\eq
\begin{split}
\Ext^n_{\cO^\fp}(\Dirr(\lambda), \ninj(\mu))  &\cong \Ext^n_{(\fg, \fa_\0)}(\Dirr(\lambda), M)
\\
&\cong \Ext^n_{(\fp^-, \fa_\0)}(\Dirr(\lambda),  I_\fa(\mu_{0})) \quad \textup{by adjointness }
\\
&\cong \Ext^n_{(\fa, \fa_\0)}(L_\fa(\lambda_{0}), I_\fa(\mu_{0})) \quad \textup{since } \Dirr(\lambda)|_{\fp^-} \cong U(\fp^-) \otimes_{U(\fa)} L_\fa(\lambda_0).
\end{split}
\endeq
That is, the Ext-orthogonality result holds.

Next, in order to show that $R^j\cF(M) = 0$ for $j>0$, we consider the following spectral sequence, again from Theorem~\ref{thm:E2OpRj}:
\eq\label{eq:EOSS2}
E_2^{i,j} = \Ext^i_{\cO^\fp}(P(\lambda), R^j\cF(M)) \Rightarrow \Ext^{i+j}_{(\fg, \fa_\0)}(P(\lambda), M).
\endeq
The spectral sequence \eqref{eq:EOSS2} collapses since $P(\lambda)$ is projective in $\cO^\fp$, and hence
\eq
\Hom_{\cO^\fp}(P(\lambda), R^j\cF(M)) \cong \Ext^j_{(\fg,\fa_\0)} (P(\lambda), M).
\endeq
One is now reduced to showing that $\Ext^j_{(\fg,\fa_\0)} (P(\lambda), M) = 0$ for $j >0$, or equivalently,
\eq\label{eq:EOE}
\Ext^j_{(\fp^-,\fa_\0)} (P(\lambda),  I_\fa(\lambda_{0})) = 0,
\endeq
thanks to Frobenius reciprocity. 

From Proposition~\ref{P:projprop}, $P(\lambda)$ is a direct summand of the projective module $N := U(\fg)\otimes_{U(\fg_\0)} P_\0(\sigma)$ for some $\sigma\in \Lambda_0$.
Consider the following spectral sequence from Theorem \ref{thm:LHSSS}:
\eq\label{eq:EOSS3}
E_2^{i,j} = \Ext^i_{(\fa, \fa_\0)}(\CC, \Ext^j_{U(\fn^-)}(N,\CC) \otimes I_\fa(\mu_{0})) \Rightarrow \Ext^{i+j}_{(\fp^-, \fa_\0)}(N, I_\fa(\mu_{0})). 
\endeq
Note that $E_2^{i,j} = 0$ for $i>0$ due to injectivity of $I_\fa(\mu_{0})$.
As long as we can prove that $E_2^{i,j} = 0$ for $j>0$ as well, then $\Ext^n_{(\fp^-,\fa_\0)}(N, I_\fa(\mu_{0})) =0$ for $n
>0$, and thus \eqref{eq:EOE} follows.

Observe that $P_\0(\sigma)$ has a filtration with sections of the form $U(\fg_\0)\otimes_{U(\fp_\0)} L_\0 (\delta)$ for some $\delta\in \Lambda_0$. Then, $N$ has a filtration with sections of the form
\eq
U(\fg)\otimes_{U(\fg_\0)} U(\fg_\0)\otimes_{U(\fp_\0)} L_\0 (\delta)
\cong U(\fg)\otimes_{U(\fp_\0)} L_\0 (\delta)
\cong U(\fg)\otimes_{U(\fp)} (U(\fp)\otimes_{U(\fp_\0)} L_\0 (\delta)).
\endeq
Since such $U(\fg)\otimes_{U(\fp)} (U(\fp)\otimes_{U(\fp_\0)} L_\0 (\delta))$ is free over $U(\fn^-)$, it follows that 
$\Ext^j_{U(\fn^-)}(N, \CC) = 0$ for $j>0$. The statement of the theorem is now proved.
\endproof

\subsection{} Next we prove a variant of a lemma for $\ninj(\lambda)$ that is inspired by \cite[II Proposition 2.14]{Ja03}.
\begin{lem}\label{lem:Jantzen}
Let $\fp$ be a principal parabolic subalgebra of $\fg$, and let $V$ be a module in ${\cO^{\fp}}$ which satisfies that
$\Hom_{\cO^\fp}(L(\mu), V) \neq 0$ for some $\mu \in \Lambda$.
Suppose that
\begin{enumerate}
\item[(a)] $\Hom_{\cO^\fp}(L(\nu), V) = 0$ for all $\nu < \mu$,
\item[(b)] $\Ext^1_{\cO^\fp}(\Dirr(\nu), V) = 0$ for all $\nu \leq \mu$,
\end{enumerate}
then $\ninj(\mu) \hookrightarrow V$.
\end{lem}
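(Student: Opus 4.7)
The plan is to adapt the classical Jantzen-style argument (cf.\ \cite[II.2.14]{Ja03}) to $\cO^\fp$, building the embedding in three steps: (i) identify $L(\mu)$ as the simple socle of $\ninj(\mu)$; (ii) extend the given embedding $L(\mu) \hookrightarrow V$ to a morphism $\psi: \ninj(\mu) \to V$; (iii) verify that $\psi$ is injective. For (i), the injective hull $I_\fa(\mu_0)$ has simple socle $L_\fa(\mu_0)$ in $\cC_{(\fa,\fa_\0)}$, so the left-exactness of $\Hom_{U(\fp^-)}(U(\fg),-)$ combined with that of $\cF$ (see \eqref{def:F}) yields $L(\mu) \hookrightarrow \ninj(\mu)$. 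Conversely, for any $\lambda \in \Lambda$ the surjection $\Dirr(\lambda) \twoheadrightarrow L(\lambda)$ induces
\[
\Hom_{\cO^\fp}(L(\lambda), \ninj(\mu)) \hookrightarrow \Hom_{\cO^\fp}(\Dirr(\lambda), \ninj(\mu)) = \delta_{\lambda,\mu}\,\CC
\]
by Proposition~\ref{prop:exto}, so $\mrm{soc}\,\ninj(\mu) \cong L(\mu)$. By Theorem~\ref{T:finitecompfactors}, $\ninj(\mu)$ has finite length, hence its socle is essential.

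For (ii), fix a nonzero (hence injective) $\iota \in \Hom_{\cO^\fp}(L(\mu), V)$ and look at the short exact sequence $0 \to L(\mu) \to \ninj(\mu) \to Q \to 0$. The coinduced module $\Hom_{U(\fp^-)}(U(\fg), I_\fa(\mu_0))$ has $\ft$-weights of the form $\sigma - \alpha$ with $\sigma$ a weight of $I_\fa(\mu_0)$ (so $\cH(\sigma) = \cH(\mu_0)$) and $\alpha$ a nonnegative integral combination of roots in $\Phi^+_\cH$ (so $\cH(\alpha) \geq 0$); hence every weight of $\ninj(\mu)$ has $\cH$-value at most $\cH(\mu_0)$, and every composition factor of $Q$ is of the form $L(\nu)$ with $\nu \leq \mu$ in the proset \eqref{eq:Ofaposet}. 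Refine a finite composition series of $Q$ (finite by Theorem~\ref{T:finitecompfactors}) into a filtration whose successive quotients are of the form $\Dirr(\nu)$ or quotients thereof, with $\nu \leq \mu$: this can be arranged by covering each simple subquotient $L(\nu)$ by the corresponding standard $\Dirr(\nu)$, using Proposition~\ref{prop:exto} and the presence of enough projectives supplied by Proposition~\ref{P:projprop}. Applying $\Hom_{\cO^\fp}(-, V)$ and inducting along this filtration, the successive $\Ext^1$-obstructions are controlled by $\Ext^1_{\cO^\fp}(\Dirr(\nu), V) = 0$ for $\nu \leq \mu$ (hypothesis (b)) and by $\Hom_{\cO^\fp}(L(\nu), V) = 0$ for $\nu < \mu$ (hypothesis (a)). Thus the connecting map $\Hom_{\cO^\fp}(L(\mu), V) \to \Ext^1_{\cO^\fp}(Q, V)$ vanishes on $\iota$, and $\iota$ lifts to some $\psi: \ninj(\mu) \to V$.

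Finally, for (iii): $\psi|_{L(\mu)} = \iota$ is injective, so $\ker\psi \cap L(\mu) = 0$; since $L(\mu)$ is the essential socle of $\ninj(\mu)$ by (i), it follows that $\ker\psi = 0$ and $\psi$ is the desired embedding. The main obstacle is step (ii), specifically the refinement replacing each simple layer $L(\nu)$ of $Q$ by a $\Dirr(\nu)$-type layer so that the obstructions land in $\Ext^1_{\cO^\fp}(\Dirr(\nu), V)$ (controlled by hypothesis (b)) rather than in the intractable $\Ext^1_{\cO^\fp}(L(\nu), V)$. This relies on careful interplay between the proset ordering (governed by $\cH$), the Ext-orthogonality of Proposition~\ref{prop:exto}, and the availability of enough projectives in $\cO^\fp$ from Proposition~\ref{P:projprop}; once this refined filtration is in place, the remaining steps are routine.
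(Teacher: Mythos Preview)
Your overall architecture (steps (i)--(iii)) matches the paper's, and your treatment of (i) and (iii) is actually more explicit than the paper's (the paper silently uses that $L(\mu)$ is the essential socle of $\ninj(\mu)$ to pass from a lift to an embedding).

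The problem is in step (ii). Your proposed ``refinement'' of a composition series of $Q$ into a filtration with $\Dirr(\nu)$-type subquotients does not make sense: refining a filtration can only make subquotients smaller, not replace a simple $L(\nu)$ by a larger module $\Dirr(\nu)$ inside the fixed object $Q$. There is no mechanism here (and Proposition~\ref{P:projprop} does not supply one) to manufacture a $\Dirr$-filtration of $Q$, and indeed $Q$ need not admit one.

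The paper avoids this entirely. Instead of filtering $Q$, it proves directly that $\Ext^1_{\cO^\fp}(L(\sigma),V)=0$ for every composition factor $L(\sigma)$ of $Q$ (with $\sigma \leq \mu$), which by d\'evissage forces $\Ext^1_{\cO^\fp}(Q,V)=0$. For a fixed $\sigma$, take the short exact sequence $0 \to N \to \Dirr(\sigma) \to L(\sigma) \to 0$. Every composition factor $L(\gamma)$ of $N$ satisfies $\gamma < \sigma \leq \mu$, so hypothesis (a) gives $\Hom_{\cO^\fp}(N,V)=0$; the long exact sequence then yields an injection $\Ext^1_{\cO^\fp}(L(\sigma),V) \hookrightarrow \Ext^1_{\cO^\fp}(\Dirr(\sigma),V)$, and the latter vanishes by hypothesis (b). This is the clean replacement for your ``refinement'' step: the standard module $\Dirr(\sigma)$ is used as a \emph{cover} of each simple $L(\sigma)$, not as a filtration layer of $Q$.
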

\proof
Since $\Hom_{\cO^\fp}(L(\mu), V) \neq 0$, there is a short exact sequence $0 \to L(\mu) \to \ninj(\mu) \to Q \to 0$ for some $Q \in \cO^\fp$. From the long exact sequence 
in cohomology, one gets an exact sequence
\eq
0\to \Hom_{\cO^\fp}(Q,V) \to \Hom_{\cO^\fp}(\ninj(\mu),V) \to \Hom_{\cO^\fp}(L(\mu),V) \to \Ext^{1}_{\cO^\fp}(Q,V).
\endeq 
We are done as long as $\Ext^1_{\cO^\fp}(Q,V) = 0$, which would follow if 
\eq
\Ext^1_{\cO^\fp}(L(\sigma),V) = 0\quad\textup{for any composition factor } L(\sigma) \textup{ of }Q. 
\endeq
Note that $\sigma\leq \mu$. For each $\sigma$,  consider the short exact sequence $0 \to N \to \Dirr(\sigma) \to L(\sigma) \to 0$.  Applying the left exact functor $\Hom_{\cO^\fp}(-,V)$ and using 
the fact that $\Hom_{\cO^\fp}(N,V) = 0$ since any composition factor of $N$ is of the form $L(\gamma)$ for some $\gamma < \sigma \leq \mu$, one has an exact sequence
\eq
0\to  \Ext^1_{\cO^\fp}(L(\sigma),V) \to \Ext^1_{\cO^\fp}(\Dirr(\sigma),V).
\endeq
Furthermore, $\Ext^1_{\cO^\fp}(\Dirr(\sigma),V) = 0$ by (b) since  $\sigma  \leq \mu$. This concludes the proof.
\endproof

One can now extend the Ext-orthogonality result to a criteria for modules in $\cO^\fp$ to have a $\ninj$-filtration.

\thm \label{T:Jantzenlem}
Let $V\in \cO^\fp$. If $\dim \Hom_{\cO^\fp}(\ninj(\gamma), V) < \infty$ for all $\gamma \in \Lambda$. Then the following are equivalent:
\begin{enumerate}
\item[(a)] $V$ has a $\ninj$-filtration.
\item[(b)] $\Ext^n_{\cO^\fp}(\Dirr(\gamma), V) = 0$ for all $\gamma \in \Lambda$ and $n > 0$.
\item[(c)] $\Ext^1_{\cO^\fp}(\Dirr(\gamma), V) = 0$ for all $\gamma \in \Lambda$.
\end{enumerate}
Furthermore, if $I(\lambda)$ is the injective hull of $L(\lambda)$ in ${\mathcal O}^{\mathfrak p}$, then one has the following reciprocity law: 
$$[I(\lambda): \ninj(\gamma)]=[\Dirr(\gamma):L(\lambda)]$$ 
for $\lambda, \gamma\in \Lambda$. 
\endthm
\proof
The implication  $(a) \Rightarrow (b)$ is a consequence of Proposition~\ref{prop:exto}.
The implication  $(b) \Rightarrow (c)$ is clear. It remains to show that $(c) \Rightarrow (a)$.

First, consider those weights $\mu \in \Lambda$ such that
\eq\label{eq:minmu}
\Hom_{\cO^\fp}(L(\mu), V) \neq 0,
\quad
\Hom_{\cO^\fp}(L(\nu), V) = 0\textup{ for all }{\nu} < {\mu}.
\endeq
Since $V$ has finitely many composition factors by Theorem~\ref{T:finitecompfactors}, there is a minimal weight $\mu$ satisfying \eqref{eq:minmu} in the proset. 
That is, there is no $\sigma \in \Lambda$ such that $\={\sigma} < \={\mu}$, $\Hom_{\cO^\fp}(L(\sigma), V) \neq 0$, and $\Hom_{\cO^\fp}(L(\nu), V) = 0$ for all $\nu < \sigma$.

For any $\nu \leq \mu$, by (c) we have $\Ext^1_{\cO^\fp}(\Dirr(\nu), V) = 0$, and thus $\ninj(\mu) \hookrightarrow V$ by Lemma \ref{lem:Jantzen}, i.e., $V$ has a submodule $V'$ that is isomorphic to $\ninj(\mu)$.
By Proposition \ref{prop:exto}, $V'$ satisfies (b). Also, $V'' := V/V'$ satisfies (c). 
Note that $\Hom_{\cO^\fp}(\Dirr(\nu), V'') \cong \Hom_{\cO^\fp}(\Dirr(\nu), V)$ for all $\nu \neq \mu$ whereas
$\dim \Hom_{\cO^\fp}(\Dirr(\mu), V'') =\dim \Hom_{\cO^\fp}(\Dirr(\mu), V)-1$.
We can then iterate this construction to obtain a $\ninj$-filtration for $V$.

Finally, observe that by (c), $I(\lambda)$ has a $\ninj$-filtration, and one can count the number of times such a factor occurs by apply $\text{Hom}_{\cO^\fp}(\Dirr(\gamma),-)$ to short exact sequences of modules 
with a $\ninj$-filtration. Therefore, 
\[
[I(\lambda): \ninj(\gamma)]=\dim \text{Hom}_{\cO^\fp}(\Dirr(\gamma),I(\lambda))=[\Dirr(\gamma):L(\lambda)]. 
\]
\endproof

 \subsection{Proof of Theorem~\ref{T:stratification}}
\proof
We check the conditions given in Definition~\ref{def:ssc}. To show that 
$\cO^\fp$ is locally Artinian, it suffices to show that any $M\in \cO^\fp$ is a union of submodules of finite length. This follows since 
$M$ has finitely many composition factors by Theorem~\ref{T:finitecompfactors}. 

 For Conditions (a)--(b), 
the set 
$\Lambda$
has a structure of interval-finite proset given in \eqref{eq:Ofaposet}. 
 For Condition (c), we set  $\nabla(\lambda) = \ninj(\lambda)$ as in \eqref{eq:ninjDirr}.
For any $\mu = (\mu_0,j)$ such that  $\Pi^jL(\mu_0)$ is a composition factor of $\Pi^{j}\cF(\Hom_{U(\fp^-)} (U(\fg), I_\fa(\lambda_0)))$,
$\mu_0$ must be of the form $\mu_0 = \sigma - \gamma$ for some weight $\sigma$ of $I_\fa(\lambda_0)$ and some weight $\gamma$ of $U(\fn^-)$. 
Moreover, 
since any weight in $I_\fa(\lambda_0)$ has the same $\cH$-value (i.e., every weight of $I_\fa(\lambda_0)$ has $\cH$ value $\cH(\lambda_{0}))$,
\eq
\overline{\mu}_0 = \cH(\sigma -\gamma) = \cH(\sigma) -\cH(\gamma)  = \overline{\lambda_0} -\cH(\gamma)  \leq \overline{\lambda_0}.
\endeq
Hence, $\mu_0 \leq \lambda_0$.

 For Condition (d), we will construct a $\nabla$-filtration of $I(\lambda)$ or $\Pi I(\lambda$) using Theorem~\ref{T:Jantzenlem}. Since $I(\lambda)$ is injective 
 in ${\mathcal O}^{\mathfrak p}$, it follows that $\Ext^n_{\cO^\fp}(\Dirr(\gamma), I(\lambda)) = 0$ for all $\gamma \in \Lambda$ and $n > 0$. Therefore, 
 $I(\lambda)$ has a $\ninj$-filtration. The proof of Theorem~\ref{T:Jantzenlem} demonstrates that the filtration can be constructed with the necessary properties. 
\endproof

\section{Bound on the Complexity}

\subsection{Complexity} The {\em complexity} of a module was first defined by Alperin for finite groups (cf. \cite{Al77}). This notion was later extended to finite group schemes and with the deep result of 
Friedlander and Suslin ~\cite{FS97}, one can identify the complexity with the dimension of the support variety of a module. For classical Lie superalgebras, this concept was used in work by Boe, Kujawa, and Nakano~\cite{BKN10b} to study the category of finite-dimensional representations. Coulembier and Serganova extended this definition for the Category ${\mathcal O}$ for ${\mathfrak g}=\mathfrak{gl}(m|n)$~\cite{CS17}. 
We use this definition for ${\mathcal O}^{\mathfrak p}$, where ${\mathfrak p}$ is a principal parabolic subalgebra of ${\mathfrak g}$.

In order to define the complexity of a module, recall that given a sequence $\{V_i\}_{i\geq 0}$ of finite-dimensional complex vector spaces, its vector space rate of growth $r(V_i)$ is the smallest positive integer $c$ such that $\dim V_i\leq Ci^{c-1}$ for some positive constant $C$.    

\begin{defn} Let ${\mathfrak p}$ be a principal parabolic subalgebra and $M$ be a module in ${\mathcal O}^{\mathfrak p}$. The {\em complexity} of $M$ is defined as 
$$c_\fp(M) = r(\Ext^i_{\cO^\fp}(M, \oplus_{\lambda\in \Lambda}  L(\lambda))).$$ 
\end{defn} 
According to Theorem~\ref{T:cohomology}, the complexity can be expressed in terms of relative cohomology 
\begin{equation} 
c_\fp(M) = r(\Ext^i_{(\fg, \fl_\0)}(M,  \oplus_{\lambda\in \Lambda}  L(\lambda))).
\end{equation} 

\subsection{Universally $\0$-bounded} Let ${\mathfrak p}$ be a principal parabolic in ${\mathfrak g}$. From Lemma~\ref{L:ptop0}, if $L(\lambda)$ is a simple module in ${\mathcal O}^{\fp}$ then this module is in ${\mathcal O}^{\fp_{\0}}$.  Therefore, in the Grothendieck group $[L(\lambda)]=\sum_{\mu\in \Lambda_{\0}}n_{\lambda,\mu} [L_{\0}(\mu)]$. We say that the the simple modules in ${\mathcal O}^{\fp}$ are {\em universally $\0$-bounded} if and only if there exists a $C>0$ such that $n_{\lambda,\mu}\leq C$ for all $\lambda\in \Lambda$ and $\mu\in \Lambda_{\0}$. 

As we will see in the next result, in order to show that the simple module in ${\mathcal O}^{\fp}$ are {\em universally $\0$-bounded}, one needs to use important results involving parabolic Category ${\mathcal O}$ for semisimple Lie algebras. 

\begin{prop} Let ${\mathfrak p}$ be a principal parabolic in ${\mathfrak g}$. Then the simple modules in ${\mathcal O}^{\fp}$ are universally $\0$-bounded. 
\end{prop}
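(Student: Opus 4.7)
The plan is to bound $n_{\lambda,\mu}$ by realizing $L(\lambda)$ as a quotient of $\Dirr(\lambda) = U(\fg)\otimes_{U(\fp)} L_\fa(\lambda_0)$, so that $n_{\lambda,\mu} \leq [\Dirr(\lambda):L_\0(\mu)]$, and then producing a $\fg_\0$-parabolic Verma filtration of $\Dirr(\lambda)|_{\fg_\0}$ to which the universally bounded composition multiplicities supplied by Proposition~\ref{Cat O-Liealg prop}(c) can be applied.

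First, I would consider the $\fn^-_\1$-degree filtration
\[
F_j := \sum_{i \leq j} U(\fn^-_\0)\cdot \Lambda^i(\fn^-_\1)\cdot L_\fa(\lambda_0) \subseteq \Dirr(\lambda),
\]
and verify that it is $U(\fg_\0)$-stable. Using super PBW together with $\fn^+ \cdot L_\fa(\lambda_0) = 0$, moving an $\fn^+_\0$-element through a product produces commutators whose $\fn^-_\1$-components preserve the $\fn^-_\1$-degree while the $\fl_\1$- and $\fn^+_\1$-components strictly lower it (one uses that $[\fl_\1,\fn^-_\1] \subseteq \fn^-_\0$ by the $\cH$-grading, and that any remaining $\fn^+$-factor eventually annihilates $L_\fa(\lambda_0)$). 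The associated graded then identifies, as a $\fg_\0$-module, with $U(\fg_\0)\otimes_{U(\fp_\0)} (\Lambda^j(\fn^-_\1)\otimes L_\fa(\lambda_0))$ for the $\fp_\0$-structure inherited from the filtration, and refining by a $\fp_\0$-composition series of this finite-dimensional piece yields a filtration of $\Dirr(\lambda)|_{\fg_\0}$ whose subquotients are parabolic Verma modules $Z_\0(\sigma)$ with
\[
[\Dirr(\lambda):Z_\0(\sigma)] \;=\; [\Lambda(\fn^-_\1)\otimes L_\fa(\lambda_0)|_{\fl_\0}:L_\0(\sigma)].
\]

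Combining gives
\[
n_{\lambda,\mu} \;\leq\; \sum_\sigma [\Lambda(\fn^-_\1)\otimes L_\fa(\lambda_0)|_{\fl_\0}:L_\0(\sigma)]\cdot [Z_\0(\sigma):L_\0(\mu)].
\]
Proposition~\ref{Cat O-Liealg prop}(c) furnishes a universal constant $C$ with $[Z_\0(\sigma):L_\0(\mu)] \leq C$, and standard parabolic linkage restricts $\sigma$ to the finite set of $\fl_\0$-dominant weights in $W_{\fp_\0}\cdot\mu$, whose cardinality is bounded by a constant depending only on $\fg_\0$. The main obstacle is the remaining uniform bound on $[\Lambda(\fn^-_\1)\otimes L_\fa(\lambda_0)|_{\fl_\0}:L_\0(\sigma)]$, which at face value could blow up with $\dim L_\fa(\lambda_0)$. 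To control it I would expand
\[
[\Lambda(\fn^-_\1)\otimes L_\fa(\lambda_0)|_{\fl_\0}:L_\0(\sigma)] \;\leq\; \sum_\tau [L_\fa(\lambda_0)|_{\fl_\0}:L_\0(\tau)] \cdot [\Lambda(\fn^-_\1)\otimes L_\0(\tau):L_\0(\sigma)],
\]
where the sum is restricted by weight to at most $\dim\Lambda(\fn^-_\1)$ values of $\tau$, and each tensor multiplicity obeys the standard highest-weight estimate $[V\otimes L_\0(\lambda):L_\0(\kappa)] \leq \dim V_{\kappa-\lambda}$ (obtained by projecting onto the coefficient of the $\lambda_0$-highest weight vector). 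For the branching factor I would realize $L_\fa(\lambda_0)$ as an $\fl$-quotient of the induced module $U(\fl)\otimes_{U(\fl_\0)}L_\0(\lambda_0) \cong \Lambda(\fl_\1)\otimes L_\0(\lambda_0)$, giving $[L_\fa(\lambda_0)|_{\fl_\0}:L_\0(\tau)] \leq \dim\Lambda(\fl_\1)_{\tau-\lambda_0} \leq \dim\Lambda(\fl_\1)$. Chaining these dimensional bounds, each depending only on $\fg$, produces a universal constant $C'$ with $n_{\lambda,\mu} \leq C'$ for all $\lambda \in \Lambda$ and $\mu \in \Lambda_\0$.
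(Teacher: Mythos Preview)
Your argument is essentially correct, and arrives at the same conclusion via the same two ingredients (a parabolic Verma filtration plus Proposition~\ref{Cat O-Liealg prop}(c)), but you have taken a noticeably harder road than the paper. The paper does not work with $\Dirr(\lambda)$ at all. Instead it picks a simple $U(\fg_\0)$-submodule $L_\0(\sigma)\hookrightarrow L(\lambda)|_{\fg_\0}$, uses Frobenius reciprocity to get a surjection $U(\fg)\otimes_{U(\fg_\0)}Z_\0(\sigma)\twoheadrightarrow L(\lambda)$, and then applies the tensor identity in one line:
\[
U(\fg)\otimes_{U(\fg_\0)}Z_\0(\sigma)\;\cong\;\Lambda^\bullet(\fg_\1)\otimes Z_\0(\sigma)\;\cong\;U(\fg_\0)\otimes_{U(\fp_\0)}\bigl[\Lambda^\bullet(\fg_\1)\otimes L_{\fp_\0}(\sigma)\bigr]
\]
as $U(\fg_\0)$-modules. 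This instantly produces the desired parabolic Verma filtration, with sections indexed by the $\fl_\0$-constituents of the finite-dimensional module $\Lambda^\bullet(\fg_\1)\otimes L_{\fp_\0}(\sigma)$; the weight-multiplicity bound you use then finishes the argument exactly as you do.

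The comparison is instructive. By inducing from $\fg_\0$ rather than from $\fp$, the paper (i) gets $\fg_\0$-stability of the filtration for free via the tensor identity, so it never has to track commutators $[\fn^+_\0,\fn^-_\1]$ through the PBW ordering as you do, and (ii) starts from a simple $\fl_\0$-module $L_{\fp_\0}(\sigma)$ rather than a simple $\fa$-module $L_\fa(\lambda_0)$, so it never needs your second layer of analysis bounding $[L_\fa(\lambda_0)|_{\fl_\0}:L_\0(\tau)]$ via a further induction $U(\fl)\otimes_{U(\fl_\0)}L_\0(\lambda_0')$. Your route is self-contained and your bounds are valid (the estimate $[V\otimes L_\0(\tau):L_\0(\sigma)]\leq\dim V_{\sigma-\tau}$ is standard, and your branching bound for $L_\fa(\lambda_0)$ follows as you say), but the paper's one application of the tensor identity replaces both of your technical steps. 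One small point worth tightening in your write-up: the $\fp_\0$-action on $\Lambda^j(\fn^-_\1)\otimes L_\fa(\lambda_0)$ in the associated graded is genuinely not trivial on $\fn^+_\0$ (the components of $[\fn^+_\0,\fn^-_\1]$ landing in $\fn^-_\1$ survive), so ``inherited from the filtration'' should be made explicit; it does not affect the $\fl_\0$-composition factors, which is all you need, but the reader may pause there.
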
 

\begin{proof} Let $L(\lambda)$ be a simple module in ${\mathcal O}^{\mathfrak p}$. Then $L(\lambda)$ is in ${\mathcal O}^{{\mathfrak p}_{\0}}$, and there exists a simple module in 
${\mathcal O}^{{\mathfrak p}_{\0}}$, $L_{\0}(\sigma)$, where $\sigma\in \Lambda_{\0}$ such that there is a monomorphism $L_{\0}(\sigma)\hookrightarrow L(\lambda)$. Therefore, one has 
\begin{equation} 
0\neq \text{Hom}_{U({\mathfrak g}_{\0})}(L_{\0}(\sigma),L(\lambda))\cong \text{Hom}_{U({\mathfrak g})}(U({\mathfrak g})\otimes_{U({\mathfrak g}_{\0})} L_{\0}(\sigma),L(\lambda)).
\end{equation} 
It follows that there exists a surjective $U({\mathfrak g})$-homomorphism $U({\mathfrak g})\otimes_{U({\mathfrak g}_{\0})} L_{\0}(\sigma)\twoheadrightarrow L(\lambda)$. 
This restricts to a surjective $U({\mathfrak g}_{\0})$-homomorphism. 
Let $Z_{\0}(\sigma)$ be a parabolic Verma module in ${\mathcal O}^{{\mathfrak p}_{\0}}$. 
Then there exists a surjective homomorphism $Z_{\0}(\sigma)\twoheadrightarrow L_{\0}(\sigma)$, 
which means there exists a surjective homomorphism $U({\mathfrak g})\otimes_{U({\mathfrak g}_{\0})} Z_{\0}(\sigma)\twoheadrightarrow L(\lambda)$. 
As a $U(\fg_{\0})$-module, one 
has 
\begin{equation}\label{E:moduleupper}
U({\mathfrak g})\otimes_{U({\mathfrak g}_{\0})} Z_{\0}(\sigma)\cong \Lambda^{\bullet}({\mathfrak g}_{\1})\otimes_{{\mathbb C}}Z_{\0}(\sigma)
\cong U({\mathfrak g}_{\0})\otimes_{U({\mathfrak p}_{\0})} [\Lambda^{\bullet}({\mathfrak g}_{\1})\otimes L_{{\mathfrak p}_{\0}}(\sigma)]. 
\end{equation} 
The last isomorphism uses the tensor identity. 

The module $\Lambda^{\bullet}({\mathfrak g}_{\1})\otimes L_{{\mathfrak p}_{\0}}(\sigma)$ is a finite-dimensional module over $U({\mathfrak l}_{\0})$. One can show that the composition multiplicities are 
bounded by the number of weights of $\gamma=\sigma+\mu$ where $\mu$ is a weight of $\Lambda^{\bullet}({\mathfrak g}_{\1})$, and $\gamma$ is dominant with respect to ${\mathfrak l}_{\0}$. This number is clearly 
bounded by the dimension of $\Lambda^{\bullet}({\mathfrak g}_{\1})$. Now for each $\gamma$, the composition multiplicities of [$Z_{\0}(\gamma):L_{\0}(\tau)]$ are bounded by Proposition~\ref{Cat O-Liealg prop}(c).
Hence, the composition factor multiplicities of the $U({\mathfrak g}_{\0})$-module in (\ref{E:moduleupper}) is universally bounded for all 
$\sigma$, and thus the simple modules in ${\mathcal O}^{\fp}$ are universally $\0$-bounded. 
\end{proof} 


\subsection{Proof of Theorem~\ref{T:complexity}}

We can now show that the complexity of all modules in $\cO^\fp$ where ${\mathfrak p}$ is a principal parabolic are  bounded by $\dim {\mathfrak g}_{\1}$. 

\thm
If $\fp$ is a principal parabolic subalgebra of $\fg$ then $c_\fp(M) \leq \dim \fg_\1$ for all $M \in \cO^\fp$.
\endthm
\proof Set $T= \Hom_\CC(M, \bigoplus_{\lambda\in \Lambda}L(\lambda))$. By using Proposition~\ref{P:ExtC} and Theorem~\ref{thm:Maurer}
\eq
\begin{split}
c_\fp(M) &= r(\Ext^{\bullet}_{(\fg, \fl_\0)}(M,  \bigoplus\nolimits_{\lambda\in \Lambda}  L(\lambda)) 
\\
&= r(\Ext^{\bullet}_{(\fg, \fl_\0)}(\CC,  T))
\\
&\leq  r\big(
\bigoplus\nolimits_{p+q=\bullet} \opH^p(\fg, \fg_\0, T) \otimes \opH^q(\fg_\0, \fl_\0, \CC )
\big) 
\\
&= r\big(
\Ext^{\bullet}_{(\fg, \fg_\0)}(M,  \bigoplus\nolimits_{\lambda\in \Lambda}  L(\lambda))
\big) .
\end{split}
\endeq
There exists a projective resolution for ${\mathbb C}$ given by 
$$\dots\rightarrow D_{2}\rightarrow D_{1}\rightarrow D_{0}\rightarrow {\mathbb C} \rightarrow 0$$ 
where $D_{j}=U({\mathfrak g})\otimes_{U({\mathfrak g}_{\0})} S^{j}({\mathfrak g}_{\1}^{*})$ where $j\geq 0$ (cf. \cite[Proposition 2.4.1]{BKN10b}). 
One can tensor this resolution by $M$ to get a relative projective resolution for $(U({\mathfrak g}),U({\mathfrak g}_{\0}))$: 
$$\dots\rightarrow D_{2}\otimes M\rightarrow D_{1}\otimes M\rightarrow D_{0}\otimes M\rightarrow M \rightarrow 0.$$ 
Now apply the functor $\text{Hom}_{({\mathfrak g},{\mathfrak g}_{\0})}(-,\oplus_{\lambda\in \Lambda}  L(\lambda))$ to this resolution to get a complex with cochains: 
\eq
C_{j}=\text{Hom}_{({\mathfrak g},{\mathfrak g}_{\0})} \Big(
D_{j}\otimes M,\bigoplus_{\lambda\in \Lambda}  L(\lambda)
\Big)
\cong 
\text{Hom}_{({\mathfrak g}_{\0},{\mathfrak g}_{\0})}\Big(
S^{j}({\mathfrak g}_{\1}^{*})\otimes M, \bigoplus_{\lambda\in \Lambda}L(\lambda)
\Big).
\endeq
Since $\Ext^{j}_{(\fg, \fg_\0)}(M,  \bigoplus_{\lambda\in \Lambda}  L(\lambda))$ is a subquotient of $C_j$ for $j\geq 0$, one has 
\begin{equation} 
 r\Big(
 \Ext^{\bullet}_{(\fg, \fg_\0)}(M,  \bigoplus_{\lambda\in \Lambda}  L(\lambda))
 \Big)
 \leq 
 r(C^{\bullet})
 \leq 
r\Big( 
\text{Hom}_{({\mathfrak g}_{\0},{\mathfrak g}_{\0})}\big(
S^{\bullet}({\mathfrak g}_{\1}^{*})\otimes M, \bigoplus_{\lambda\in \Lambda_{\0}}L_{\0}(\lambda)
\big)
\Big).
 \end{equation}  
The last inequality holds because the simple modules in ${\mathcal O}^{\fp}$ are universally $\0$-bounded. 

In the Grothendieck group $[M]=\sum_{\sigma\in \Gamma}n_{\sigma}[L_{\0}(\sigma)]$. Since $\Gamma$ is finite we can bound $n_{\sigma}$ where 
$\sigma\in \Gamma$. Therefore, 
\eq
r\Big( 
\text{Hom}_{({\mathfrak g}_{\0},{\mathfrak g}_{\0})}\big(
S^{\bullet}({\mathfrak g}_{\1}^{*})\otimes M, \bigoplus_{\lambda\in \Lambda_{\0}}L_{\0}(\lambda)
\big)
\Big) 
\leq 
r\Big( 
\text{Hom}_{({\mathfrak g}_{\0},{\mathfrak g}_{\0})}\big(
\bigoplus_{\sigma\in \Gamma} S^{\bullet}({\mathfrak g}_{\1}^{*})\otimes L_{\0}(\sigma), \bigoplus_{\lambda\in \Lambda_{\0}} L_{\0}(\lambda)
\big)
\Big). 
\endeq

For each $j\geq 0$, one has that $\oplus_{\sigma\in \Gamma} S^{j}({\mathfrak g}_{\1}^{*})\otimes Z_{\0}(\sigma)$ surjects onto  
$\oplus_{\sigma\in \Gamma} S^{j}({\mathfrak g}_{\1}^{*})\otimes L_{\0}(\sigma)$. Moreover, in the Grothendieck group
\eq
\Big[
\bigoplus_{\sigma\in \Gamma} S^{j}({\mathfrak g}_{\1}^{*})\otimes Z_{\0}(\sigma)
\Big]
\cong 
\Big[
\bigoplus_{\sigma\in \Gamma} U({\mathfrak g}_{\0})\otimes_{U({\mathfrak p}_{\0})} S^{j}({\mathfrak g}_{\1}^{*})\otimes \sigma 
\Big]
=\sum_{\gamma\in \Gamma^{\prime}} k_{\gamma}[L_{\0}(\gamma)]
\endeq
where $\Gamma^{\prime}$ is a finite subset of $\Lambda_{\0}$. 

The module $\bigoplus_{\sigma\in \Gamma} U({\mathfrak g}_{\0})\otimes_{U({\mathfrak p}_{\0})} S^{j}({\mathfrak g}_{\1}^{*})\otimes \sigma$ has a filtration with sections being parabolic Verma modules. 
The number of sections is equal to $|\Gamma| \cdot \dim S^{j}({\mathfrak g}_{\1}^{*})$. According to Proposition~\ref{Cat O-Liealg prop}(c), the number of composition factors in any parabolic Verma module is universally bounded by $D$ (which depends on ${\mathfrak g}_{\0}$ and the parabolic ${\mathfrak p}_{\0}$). This implies that $k_\gamma\leq D\cdot |\Gamma| \cdot \dim S^{j}({\mathfrak g}_{\1}^{*})$ for all 
$\gamma\in \Gamma^{\prime}$. 

Set $Q^{j}:=\bigoplus_{\lambda\in \Gamma} S^{j}({\mathfrak g}_{\1}^{*})\otimes Z_{\0}(\lambda)$. Then 
\eq
\begin{split}
\dim \text{Hom}_{({\mathfrak g}_{\0},{\mathfrak g}_{\0})} \Big(
Q^{j}, \bigoplus_{\lambda\in \Lambda_{\0}} L_{\0}(\lambda) 
\Big)
&\leq
D\cdot |\Gamma|\cdot \dim S^{j}({\mathfrak g}_{\1}^{*})\cdot 
\dim \text{Hom}_{({\mathfrak g}_{\0},{\mathfrak g}_{\0})} \Big(
\bigoplus_{\gamma\in \Gamma^{\prime}} L_{\0}(\gamma), \bigoplus_{\lambda\in \Lambda_{\0}} L_{\0}(\lambda)
\Big) 
\\
&= D\cdot |\Gamma|\cdot \dim S^{j}({\mathfrak g}_{\1}^{*})\cdot  |\Gamma^{\prime}|.
\end{split} 
\endeq
We can now put this information together to conclude that 
\eq
c_\fp(M)\leq r(S^{\bullet}({\mathfrak g}_{\1}^{*}) )=\dim {\mathfrak g}_{\1}. \qedhere
\endeq
\endproof
\appendix
\section{Almost Simple Lie Superalgebras} \label{S:QS} 
In the appendix, we compare  common setups in the study of finite-dimensional Lie superalgebras over $\CC$.
\subsection{Quasi-reductive Lie Superalgebras}
The complete list (see Table~\ref{T:simples} below) of simple Lie superalgebras consists of the Cartan series and the classical simple Lie superalgebras.
A finite-dimensional (not necessarily simple) Lie superalgebra $\fg$ is {\em quasi-reductive} (also called {\em classical}) if $\fg_\0$ is reductive and $\fg_\1$ is semisimple as a $\fg_\0$-module.
A classical simple Lie superalgebra $\fg$ is \emph{basic} if it has a nondegenerate invariant supersymmetric even bilinear form; while it belongs to the strange series if it is not basic.
A basic classical Lie superalgebra $\fg$ is said to be of {\em Type I} if  $\fg$ is ${\mathbb Z}$-graded with $\fg_\0 = \fg_0$ and $\fg_\1 = \fg_1 \oplus \fg_{-1}$; otherwise, $\fg$ is said to be of {\em Type II}.

\begin{table}[htp]
\caption{Classification of simple Lie superalgebras}
\label{T:simples}
\begin{center}

\begin{tabular}{|c|c|c|c|}
\hline
\multicolumn{4}{|c|}{Simple Lie superalgebras}                                                                                                       
\\ \hline
	\multicolumn{3}{|c|}{Classical simple Lie superalgebras}                                                                   
	& \multirow{4}{*}{\begin{tabular}{c}\\Cartan series\\ \\
		$\begin{array}{l}
		W(n)
		\\
		S(n), \overline{S}(n)
		\\
		H(n)
		\end{array}$
	\end{tabular}} 
\\ \cline{1-3}
\multicolumn{1}{|c|}{}        
& \multicolumn{1}{c|}{Basic classical Lie superalgebras} & \multicolumn{1}{c|}{Strange series} &                                
\\ \cline{1-3}
	\multicolumn{1}{|c|}{Type I}  
	& \multicolumn{1}{c|}{
	$\begin{array}{l}
		A(m|n) =\begin{cases}
		\fsl(m|n) &\tif m\neq n;
		\\
		\mathfrak{psl}(n|n) &\tif m=n.
		\end{cases}
		\\
		C(n) = \fosp(2|2n-2), n\geq 2
	\end{array}$
	}                        
	& \multicolumn{1}{c|}{         
	 $\begin{array}{l}
 		\fp(n)
 	\end{array}$     
 	}
	&                                
	\\ \cline{1-3}
	\multicolumn{1}{|c|}{Type II} 
	& \multicolumn{1}{c|}{
	$\begin{array}{l}
		B(m|n) = \fosp(2m+1|2n)
		\\
		D(m|n) = \fosp(2m|2n), m\geq 2
		\\
		D(2|1;\alpha), F(4), G(3)
	\end{array}$
	}                       
	& \multicolumn{1}{c|}{
	 $\begin{array}{l}
 		\fpsq(n)
 	\end{array}$	
	}               
	&                   
	\\
	\hline            
\end{tabular}
\end{center}
\end{table}
Here, $\mathfrak{q}(n)$ denotes the (non-simple) Lie superalgebra with even and odd parts $\mathfrak{gl}_n(\CC)$; while $\mathfrak{psq}(n)$ is its simple subquotient. 
We denote the family of Type P Lie superalgebras by ${\mathfrak p}(n)$ and its enlargement $\widetilde{{\mathfrak p}}(n)$, with even parts $\fsl_n(\CC)$ and $\fgl_n(\CC)$, respectively.

\subsection{Almost Simple Lie Superalgebras}\label{sec:almostsimple}

It is well-known  that for basic Lie superalgebras and for $\fq(n)$, there is a structure theory similar to the one for simple Lie algebras (see \cite{CW12}).
In this paper we will focus on a larger class of Lie superalgebras, which consists of all Lie superalgebras that afford Bott-Borel-Weil (BBW) parabolic subalgebras 
constructed in \cite{GGNW21}. 
We will call them {\em almost simple} Lie superalgebras from now on. 
Almost simple Lie superalgebras (cf. \cite[Section 7]{GGNW21}) are quasi-reductive, and they include all classical simple Lie superalgebras, as well as certain non-simple Lie superalgebras (see the Venn diagram in Figure~\ref{T:QSA} below) that are close enough to being simple.

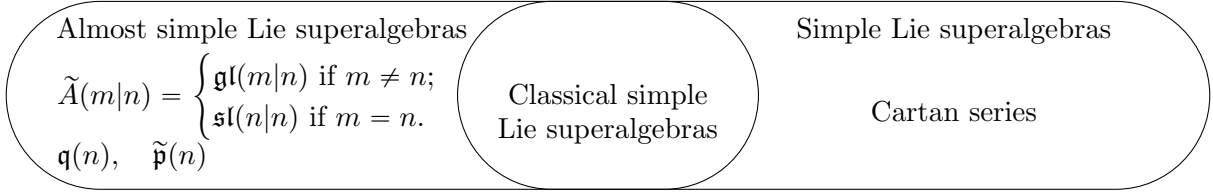
\begin{figure}[htp]
  \caption{Almost simple Lie superalgebras versus simple Lie algebras}
  \label{T:QSA}
  \centering
\begin{tikzpicture}
\tikzset{ outline/.style={thick}}
\draw [rounded corners=1.25cm] (-8,0.5) rectangle ++(10,2.5)  ;
\draw [rounded corners=1.25cm] (-2,0.5) rectangle ++(10,2.5)  ;
	\draw[outline] (-4.6, 2.6) node {\begin{tabular}{c}Almost simple Lie superalgebras\end{tabular}};
	\draw[outline] (4.6,2.6) node {\begin{tabular}{c}Simple Lie superalgebras\end{tabular}};
	\draw[outline] (0,1.5) node {\begin{tabular}{c}Classical simple\\Lie superalgebras\end{tabular}};
	\draw[outline] (4.6,1.5) node {\begin{tabular}{c}Cartan series\end{tabular}};
	\draw[outline] (-4.6,1.5) node {	
	$\begin{array}{l}
\widetilde{A}(m|n) 
=\begin{cases}
\fgl(m|n) ~\tif m\neq n;
\\
\fsl(n|n) ~\tif m=n.
\end{cases}
\\
\fq(n), \quad 
\widetilde{\fp}(n)
\end{array}$
};
\end{tikzpicture}
 \end{figure}
 
 \subsection{Calculations used for the finite generation of cohomology} 
 
 In this subsection, we present the calculations needed for the proof of Theorem \ref{T:finite generation} (see Section \ref{sec:proofB}). This is presented in a case by case manner. 
 
 \subsubsection{Type A}\label{SS:Type A} This includes ${\mathfrak g}=\mathfrak{gl}(m|n)$ and its variants. 
 For the sake of simplicity we will present the argument  for $\mathfrak{gl}(n|n)$. 
 For $\mathfrak{gl}(n|n)$, the subgroup $H$ of 
 $G_{\0}$ is the diagonal copy of the maximal torus of $GL_n$. 
 Let $\widehat{T}$ to be a one-dimensional torus in $H$ as 
 \eq
 \widehat{T}=\langle t \rangle,
 \quad
 \textup{with}
 \quad
t=  (\text{diag}\{a^{r},a^{r-2},\dots, a^{-r}\}, \text{diag}\{a^{r},a^{r-2},\dots, a^{-r}\}),
 \endeq
for $a \in \CC^*$. Here, $r=\lceil \frac{n+1}{2} \rceil$.

 The simple roots for $G_{\0}\cong GL_{n}\times GL_{n}$ are two copies of $\{\alpha_{1},\alpha_{2},\dots,\alpha_{n-1}\}$ where 
 $\alpha_{j}=\epsilon_{j}-\epsilon_{j+1}$ for $j=1,2,\dots,n-1$. Then $\alpha_{j}(t)=2$ for all $j$, and $\alpha(t)>0$ for all $\alpha\in 
 \Phi_{\0}^{+}$.  Set ${\mathfrak g}_{\0}\cong {\mathfrak n}^{-}_{\0}\oplus {\mathfrak t} \oplus {\mathfrak n}_{\0}^{+}$ be the triangular decomposition of 
 ${\mathfrak g}_{\0}$ relative to the root system $\Phi_{\0}$. Therefore, for any given weight $\lambda$, 
\eq
\text{Hom}_{\widehat{T}}({\mathbb C},\lambda \otimes U({\mathfrak n}_{\0}^{-}))\cong \text{Hom}_{\widehat{T}}(-\lambda,U({\mathfrak n}_{\0}^{-}))
 \cong \text{Hom}_{\widehat{T}}(\lambda,U({\mathfrak n}_{\0}^{+})).
 \endeq
 As a $\widehat{T}$-module, $U({\mathfrak n}_{\0}^{+})\cong S^{\bullet}({\mathfrak n}_{\0}^{+})$. 
 It follows that because the roots are positively graded 
 there are only finitely many weights in $S^{\bullet}({\mathfrak n}_{\0}^{+})$ that have weight $\lambda(t)$, thus  
 $\text{Hom}_{\widehat{T}}({\mathbb C},\lambda \otimes U({\mathfrak n}_{\0}^{-}))$ is finite-dimensional. This finishes the proof of Theorem \ref{T:finite generation}, for 
 $\mathfrak{gl}(n|n)$. 
 
 For the general case for $\mathfrak{gl}(m|n)$, one can embed a torus isomorphic to $\widehat{T}$ and enlarge this to a torus containing the remaining part of the 
 maximal torus for $G_{\0}$, in order to induce a positive grading on the roots. 
 Note that the element $t$ has determinant one. 
 This will allow one to prove the result for $\mathfrak{sl}(m|n)$ and its quotients. 
 
 \subsubsection{Type Q}  
 For ${\mathfrak q}(n)$ and its variants, the same argument used for $\mathfrak{gl}(n|n)$ can be applied. In this case $G_{\0}\cong GL_{n}$ and $H$ is the maximal torus of $G_{\0}$. 
 
\subsubsection{Type $P$} 
The methods that we employed above can be applied for Lie superalgebras of Type $P$. 
Let ${\mathfrak g}=\widetilde{\mathfrak p}(n)$ be the Lie superalgebra of type $P$ with 
${\mathfrak g}_{\0}\cong \mathfrak{gl}(n)$, and ${\mathfrak g}_{\1}\cong S^{2}(V)\oplus \Lambda^{2}(V^{*})$. 
One can modify this argument to handle ${\mathfrak p}(n)$ where the zero component is 
isomorphic to $\mathfrak{sl}(n)$. 
The relevant details about Type P can be found in \cite[Section 8]{BKN10a}. 

For the sake of simplicity, assume that $n$ is even. 
The $n$ odd case follows by similar reasoning and the verification is left to the reader. 
The subgroup $H$ of $G_{\0}$ has dimension $\frac{n}{2}$. 
There exists a one-dimensional subgroup $\widehat{T}$ where $\widehat{T}=\langle t \rangle$ with 
\eq
t=  (\text{diag}\{a^{\frac{n}{2}},a^{\frac{n}{2}-1},\dots, a^{1},a^{-1},\dots, a^{-\frac{n}{2}}\}, \text{diag}\{-a^{\frac{n}{2}},-a^{\frac{n}{2}-1},\dots, -a^{1},-a^{-1},\dots, -a^{-\frac{n}{2}}\}),  
 \endeq
for $a \in \CC^*$.

The simple roots for $G_{\0}\cong GL_{n}$ are $\{\alpha_{1},\alpha_{2},\dots,\alpha_{n-1}\}$. One can see that 
$\alpha(t)>0$ for all $\alpha\in \Phi_{\0}^{+}$, and $\text{Hom}_{\widehat{T}}({\mathbb C},\lambda \otimes U({\mathfrak n}_{\0}^{-}))$ is finite-dimensional for a fixed weight $\lambda$. 
 
 \subsubsection{Type BC} 
 In this situation, we can first assume that ${\mathfrak g}=\mathfrak{osp}(2n+1|2n)$. 
 The maximal torus for $G_{\0}\cong O_{2n+1}\times Sp_{2n}$ has the form 
\eq
T=\{ (\text{diag}\{a_{1}^{r_{1}},\dots, a_{n}^{r_{n}}, 1, a_{1}^{-r_{1}},a_{2}^{-r_{2}},\dots,a_{n}^{-r_{n}} \}, \text{diag}\{b_{1}^{t_{1}},b_{2}^{t_{2}},\dots, b_{n}^{t_{n}},b_{1}^{-t_{1}},\dots, b_{n}^{-t_{n}}\}) \}
\endeq
 where $a_{j},b_{j}\in {\mathbb C}^{*}$, and $r_{j}, t_{j} \in {\mathbb Z}$ for $j=1,2,\dots,n$.  
 Set  $\widehat{T}=\langle t \rangle$, where
\eq
t = (\text{diag}\{a^{n},a^{n-1},\dots, a^{1},1, a^{-n},a^{-(n-1)},\dots,a^{-1} \}, \text{diag}\{a^{n},a^{n-1},\dots, a^{1},a^{-n},\dots, a^{-1}\}) 
\endeq
for $a \in \CC^*$.
From \cite[Sections 8.9 and 8.10]{BKN10a}, one can see that $\widehat{T}\leq H$. 
 
 The simple roots for $G_{\0}$ are given by 
\[
\{\epsilon_{1}-\epsilon_{2},\dots,\epsilon_{n-1}-\epsilon_{n},\epsilon_{n}\}\cup \{\delta_{1}-\delta_{2},\dots,\delta_{n-1}-\delta_{n},2\delta_{n} \}.
\]
 It follows that $(\epsilon_{j}-\epsilon_{j+1})(t)=1$ and $(\delta_{j}-\delta_{j+1})(t)=1$ for $j=1,2,\dots,n-1$. 
 Moreover,  $\epsilon_{n}(t)=1$ and 
 $2\delta_{n}(t)=2$, thus $\alpha(t)>0$ for all $\alpha\in \Phi_{\0}^{+}$. 
 
 One can now use the argument given in Section~\ref{SS:Type A} to finish the proof of Theorem \ref{T:finite generation} for $\mathfrak{osp}(2n+1|2n)$. 
 The general case for $\mathfrak{osp}(2m+1,2n)$ can be handled by 
 using the same argument as in the general $\mathfrak{gl}(m|n)$ situation where you enlarge the torus $\widehat{T}$ to a larger torus.  
 
 \subsubsection{Type D} We can reduce our analysis to ${\mathfrak g}=\mathfrak{osp}(2n|2n)$. The general case can be handled by extending the torus in $H$. The maximal torus 
 in $G_{\0}\cong O(2n)\times Sp_{2n}$ has the form 
\eq
T=\{ (\text{diag}\{a_{1}^{r_{1}},\dots, a_{n}^{r_{n}}, a_{1}^{-r_{1}},a_{2}^{-r_{2}},\dots,a_{n}^{-r_{n}} \}, \text{diag}\{b_{1}^{t_{1}},b_{2}^{t_{2}},\dots, b_{n}^{t_{n}},b_{1}^{-t_{1}},\dots, b_{n}^{-t_{n}}\}) ,
\endeq
 where $a_{j},b_{j}\in {\mathbb C}^{*}$, and $r_{j}, t_{j} \in {\mathbb Z}$ for $j=1,2,\dots,n$.  
 Set  $\widehat{T}=\langle t \rangle$, where
\eq
t= (\text{diag}\{a^{n},a^{n-1},\dots, a^{1},a^{-n},a^{-(n-1)},\dots,a^{-1} \}, \text{diag}\{a^{n},a^{n-1},\dots, a^{1},a^{-n},\dots, a^{-1}\})
\endeq
for $a\in \CC^*$.
Then, $\widehat{T}\leq H$. 
The simple roots for $G_{\0}$ are 
 $$\{\epsilon_{1}-\epsilon_{2},\dots,\epsilon_{n-1}-\epsilon_{n},\epsilon_{n-1}+\epsilon_{n},\}\cup \{\delta_{1}-\delta_{2},\dots,\delta_{n-1}-\delta_{n},2\delta_{n} \}.$$ 
 One can check that $\alpha(t)>0$ for all $\alpha\in \Phi_{\0}^{+}$ to finish the argument. 
  
 \subsubsection{$D(2,1,\alpha)$} 
 In this case,
 $G_{\0}\cong SL_{2}\times SL_{2} \times SL_{2}$. 
 The maximal torus is given by 
 \eq
 T=\{(a_{1}^{t_{1}},a_{2}^{t_{2}}, a_{3}^{t_{3}}) ~|~\ a_{j}\in {\mathbb C}^{*},\ t_{j}\in {\mathbb Z},\ \text{for $j=1,2,3$} \}.
 \endeq
 From \cite[Sections 8.9--8.10]{BKN10a}, one can choose the generic element to be of the form 
\eq
x_{0}=c(x_{\omega}\otimes y_{\omega} \otimes z_{-\omega})+d(x_{-\omega}\otimes y_{-\omega} \otimes z_{\omega})
\endeq 
where $\omega$ is a fundamental weight for $SL_{2}$. With this choice of $x_{0}$, one has 
\eq
\widehat{T}=\langle t \rangle,
\quad
\textup{where}
\quad
t=  (b_{1},b_{2},b_{1}b_{2})
\endeq 
for $b_1, b_2 \in \CC^*$.
One sees that $\widehat{T}$ is a two-dimensional torus in $H$. 
 
The simple roots for $G_{0}$ are three copies of the root system $A_{1}$. The $T$-weight spaces of $U({\mathfrak n}_{\0}^{+})$ are one-dimensional, and for a fixed $\lambda
=m_{1}\alpha_{1}+m_{2}\alpha_{2} +m_{3}\alpha_{3}$, it follows that $t$ on a weight vector $x_{\lambda}$ of weight $\lambda$ will yield 
\eq
t.x_{\lambda}=b_{1}^{2m_{1}+2m_{3}}b_{2}^{2m_{2}+2m_{3}}x_{\lambda}.
\endeq 
Set $\mu_{1}=2m_{1}+2m_{3}$ and $\mu_{2}=2m_{2}+2m_{3}$. 
For $n_{j}\geq 0$ for $j=1,2,3$, there are only finitely many possibilities for $\mu_{1}=2n_{1}+2n_{3}$ and $\mu_{2}=2n_{2}+2n_{3}$. 
Consequently, $\text{Hom}_{\widehat{T}}(\lambda,U({\mathfrak n}_{\0}^{+}))$ is finite-dimensional. 
 
 \subsubsection{$G(3)$ and $F(4)$} The arguments are similar for $G(3)$ and $F(4)$. We will sketch the $G(3)$ argument. 
 For $G(3)$, the maximal torus for $G_{0}=SL_{2}\times G_{2}$ is three-dimensional: 
\eq
T=\{(a^{r},b_{1}^{t_{1}}, b_{2}^{t_{2}}) ~|~\ a, b_{j}\in {\mathbb C}^{*},\ r, t_{j}\in {\mathbb Z},\ \text{for $j=1,2$} \}.
\endeq
We assume that the torus for $G_{2}$ is calibrated with respect to the simple roots $\Delta=\{\alpha_{1},\alpha_{2}\}$ for $G_{2}$ where $\alpha_{2}$ is long. 

In this case the generic point is $x_{0}=c(x_{-\omega}\otimes y_{\alpha_{2}})+d (x_{\omega}\otimes y_{-\alpha_{2}})$.  Set $\widehat{T}=T\cap H$. Then 
\eq
\widehat{T}=\langle t \rangle,
\quad
\textup{where}
\quad
t=  (b_{2},b_{1},b_{2})
\endeq 
for $b_1, b_2 \in \CC^*$.
One sees that $\widehat{T}$ is a two-dimensional torus in $H$. 

Now consider the $T$-weight spaces of $U({\mathfrak n}_{\0}^{+})$. For a fixed $\lambda
=m\alpha+m_{1}\alpha_{1} +m_{2}\alpha_{2}$, it follows that $t$ on a weight vector $x_{\lambda}$ of weight $\lambda$ will yield 
$t.x_{\lambda}=b_{1}^{m_{1}}b_{2}^{2m+m_{2}}x_{\lambda}$. It follows that $\text{Hom}_{\widehat{T}}(\lambda,U({\mathfrak n}_{\0}^{+}))$ is finite-dimensional by 
using the same reasoning as in the $D(2,1,\alpha)$-case.


\begin{thebibliography}{GGNW21xx}
\bibliographystyle{amsalpha}
\bibitem[ADL98]{ADL98} 
I.~Agoston, V.~Dlab, E.~Lukacs, Stratified algebras, {\em C. R. Math. Acad. Sci. Soc. R. Can.,} {\sf 20}, (1998), 22--28.

\bibitem[A77]{Al77} J.L.~Alperin, Periodicity in groups, {\em Ill. J. Math.}, {\sf 21}, (1977), 776–783.

\bibitem[AM15]{AM15} H.H.~Andersen, V.~Mazorchuk, Category ${\mathcal O}$ for quantum groups, {\em J. Eur. Math. Soc.}, {\sf 17}, (2015), 405-431. 

\bibitem[BB81]{BB81} A.~Beilinson, J.~Bernstein, Localisation de $\mathfrak{g}$-modules, {\em C.R. Math. Acad. Sci. Paris}, {\sf 292}, (1981), 15–18.
%
%
%
\bibitem[BK81]{BK81} J.L.~Brylinski, M.~Kashiwara, Kazhdan--Lusztig conjecture and holonomic systems, 
{\em Invent. Math.}, {\sf 64}, (1981), 387–410.

\bibitem[BKN10a]{BKN10a} B. Boe, J. Kujawa, D.K.~Nakano, Cohomology and support varieties for {L}ie superalgebras, 
{\em Trans. Amer. Math. Soc.}, {\sf 362}, (2010), 6551--6590. 

\bibitem[BKN10b]{BKN10b}
\bysame, Complexity and module varieties for classical Lie superalgebras,
{\em Int. Math. Res. Not.}, {\sf 2011}, (2011), 696--724.

\bibitem[BKN17]{BKN17}
\bysame, Tensor triangular geometry for Lie superalgebras, {\em Adv. Math.}, {\sf 314}, (2017), 228--277. 


%
\bibitem[BN05]{BN05} 
B.~Boe, D.K.~Nakano, Representation type of the blocks of category $\mathcal{O}_S$, 
{\em Adv. Math.}, {\sf 196}, (2005), 193--256.

\bibitem[Br03]{Br03}
 J.~Brundan, Kazhdan--Lusztig polynomials and character formulae for the Lie superalgebra $\mathfrak{gl}(m|n)$, 
 {\em J. Amer. Math. Soc.}, {\sf 16}, (2003), 185--231.

\bibitem[Br04]{Br04}
J.~Brundan, Kazhdan-Lusztig polynomials and character formulae for the Lie superalgebra $q(n)$,
{\em Adv. Math.}, {\sf 182}, (2004), 28--77.

\bibitem[BrS08]{BS08}
J.~Brundan, C.~Stroppel, Highest weight categories arising from Khovanov's diagram algebra II: Koszulity,
{\em Transform. Groups},  {\sf15} (1), (2008), 1--45.

\bibitem[BrS11]{BS11}
J.~Brundan, C.~Stroppel, Highest weight categories arising from {K}hovanov's diagram algebra {I}: {C}ellularity,
{\em Mosc. Math. J.}, {\sf 11}, (2011), 685--722.

\bibitem[BrS12]{BS12}
J.~Brundan, C.~Stroppel, Highest weight categories arising from Khovanov's diagram algebra VI: the general linear supergroup,
{\em J. Eur. Math. Soc.}, {\sf 14}, (2012), 373--419.

\bibitem[BrS24]{BS24} 
J.~Brundan, C.~Stroppel,  Semi-infinite highest weight categories, {\em Mem. Amer. Math. Soc.}, {\sf 293},  (2024).

\bibitem[BT18]{BT18}
G.~Bellamy, U.~Thiel, Highest weight theory for finite-dimensional graded algebras with triangular decomposition, 
{\em Adv. Math.}, {\sf 330}, (2018), 361--419.

%

\bibitem[C15]{C15}
C.-W.~Chen,
Finite-dimensional representations of periplectic {L}ie superalgebras, 
{\em J. Algebra}, {\sf 443}, (2015), 99 -- 125.

\bibitem[CWZ07]{CWZ07} 
S.-J.~Cheng, W.~Wang, and R.~Zhang, 
A {F}ock space approach to representation theory of $\mathfrak{osp}(2|2n)$, 
{\em Transform. Groups}, {\sf 12}, (2007), 209--225.

\bibitem[CCC21]{CCC21} 
C.-W.~Chen, S.-J.~Cheng,  K.~Coulembier, 
Tilting modules for classical Lie superalgebras, 
{\em J. Lond. Math. Soc.}, {\sf 103} (2), (2021), 870--900.

%
%
\bibitem[CK14]{CK14} 
Y.~Chen, M.~Khovanov, An invariant of tangle cobordisms via subquotients of arc rings, 
{\em Fund. Math.},  {\sf 225}, (2014), 23--44.

\bibitem[CM15]{CM15}
K.~Coulembier, W.~Mazorchuk,
Some homological properties of category $\mathcal{O}$. III, 
{\em Adv. Math.}, {\sf 283}, (2015), 204--231.

\bibitem[CLW11]{CLW11} 
S.-J.~Cheng, N.~Lam, W.~Wang, Super duality and irreducible characters of orthosymplectic Lie superalgebras, 
{\em Invent. Math.}, {\sf 183}, (2011), 189--224.

\bibitem[CLW12]{CLW12} 
S.-J.~Cheng, N.~Lam, W.~Wang, Super duality for general linear Lie superalgebras and applications, 
{\em Proc. of Symp. in Pure Math.},  {\sf 86}, (2012), 113--136.

\bibitem[CLW15]{CLW15}
S.~Cheng, N.~Lam, W.~Wang,  Brundan-Kazhdan-Lusztig conjecture for general linear Lie superalgebras,
{\em Duke J. Math,},  {\sf 164}, (2015), 617--695.

\bibitem[CPS88]{CPS88}
E.~Cline, B.~Parshall, L.~Scott,  Finite dimensional algebras and highest weight categories, 
{\em J. reine angew. Math.},  (1988), {\sf 391}, 85--99.

\bibitem[CPS96]{CPS96}
E.~Cline, B.~Parshall, L.~Scott,  Stratifying endomorphism algebras, {\em Mem. Amer. Math. Soc.},  {\sf 124}, (1996).

%
%
\bibitem[CS17]{CS17}
K.~Coulembier, V.~Serganova, Homological invariants in category for the general linear superalgebra, 
{\em Trans. Amer. Math. Soc.},  {\sf 369}, (2017), 7961--7997.

\bibitem[CW12]{CW12}
S.~Cheng,  W.~Wang, 
{\em Dualities and Representations of Lie Superalgebras}, Graduate Studies in Mathematics, 
vol. 144, American Mathematical Society, Providence, RI, (2012).
%
%
\bibitem[CW19]{CW19}
S.~Cheng, W.~Wang, 
Character formulae in category $\cO$ for exceptional {L}ie superalgebras $D(2|1;\zeta)$,
{\em Transform. Groups}, {\sf 24}, (2019), 781--821.
%
\bibitem[CW22]{CW22}
S.~Cheng, W.~Wang, 
Character formulae in category $\cO$ for exceptional {L}ie superalgebras $G(3)$,
{\em Kyoto J. Math}, {\sf 62 (4)}, (2022), 719--751.
%
\bibitem[De80]{De80} 
P.~Delorme, 
Extensions in the {B}ernsten-{G}elfand-{G}elfand category $\mathcal{O}$, 
{\em Funct Anal. Appl.}, {\sf 14}, (1980), no. 3, 77--78.
%
\bibitem[D00]{Dl00}
V.~Dlab, 
Properly stratified algebras, 
{\em C.R. Acad. Sci. Paris Ser. I Math.},  {\sf 331}, (2000), 191--196.

%

\bibitem[DMP00]{DMP00}
I.~Dimitrov, O.~Mathieu, I.~Penkov,
On the structure of weight modules. 
{\em Trans. Amer. Math. Soc.}, {\sf 352} (6), (2000), 2857--2869.

\bibitem[DNN12]{DNN12}
C.~Drupieski, D.K.~ Nakano, N.~Ngo, Cohomology of infinitesimal unipotent algebraic and quantum groups, 
{\em Transform. Groups}, {\sf 17}, (2012), 393--416. 

\bibitem[DSY24]{DSY24}
F.~Duan, B.~Shu, Y.~Yao,
Parabolic BGG categories and their block decomposition for Lie superalgebras of Cartan type,
{\em J. Math. Soc. Japan}, {\sf 76} (2), (2024), 503--562.
%
\bibitem[EW14]{EW14} B.~Elias, G.~Williamson, The Hodge theory of Soergel bimodules, 
{\em Ann. Math.}, {\sf 180}, (2014), 1089--1136.

\bibitem[Fr07]{Fr07}
A.~Frisk, Dlab’s theorem and tilting modules for stratified algebras,  {\em J. Algebra}, {\sf 314}, (2007), 507--537.

\bibitem[FS97]{FS97}
E.~Friedlander, A.~Suslin, Cohomology of finite group schemes over a field,
{\em Invent. Math.}, {\sf 127}, (1997), 209--270.

\bibitem[GGNW21]{GGNW21}
D.~Grantcharov, N.~Grantcharov, D.K.~Nakano, J.~Wu, On BBW parabolics for simple classical Lie superalgebras, 
{\em Adv. Math.}, {\sf 381}, 107647 (2021).

\bibitem[GY13]{GY13} D.~Grantcharov,  M.~Yakimov,  Cominuscule parabolics of simple finite-dimensional Lie superalgebras, 
{\em J. Pure Appl. Algebra}, {\sf 217}, (2013), 1844--1863.

\bibitem[HS53]{HS53} G.~Hochschild, J.P.~Serre, Cohomology of Lie algebras, {\em Ann. Math.}, {\sf 57}, (1953), 591--603. 

\bibitem[HN91]{HN91} R.R.~Holmes, D.K.~Nakano,  Brauer-type reciprocity over a class of graded associative algebras, {\em J. Algebra}, 
{\sf 144}, (1991), 117-129. 

\bibitem[Hu08]{Hu08} J.E.~Humphreys, 
{\em  Representations of semisimple Lie algebras on the BGG Category O}, 
Graduate Studies in Mathematics, {\sf 94}. 
American Mathematical Society, Providence, RI,
2008.

\bibitem[I90]{I90} R.S.~Irving,  BGG algebras and the BGG reciprocity principle, {\em J. Algebra}, 
{\sf 135}, (1990), 363-380.  

\bibitem[Ja03]{Ja03} J.C.~Jantzen, 
{\em Representations of Algebraic Groups}, 
Second Edition, Mathematical Surveys and Monographs, Vol. 107, American Mathematical Society, Providence RI, 2003. 

%
%
%
%
\bibitem[Kho02]{Kho02}
M.~Khovanov, A functor-valued invariant of tangles, {\em Alg. Geom. Top.},  {\sf 2},  (2002),  665--741.

\bibitem[KL79]{KL79} D.~Kazhdan, G.~Lusztig, Representations of Coxeter groups and Hecke algebras, 
{\em Invent. Math.}, {\sf 53}, (1979), 165–184. 

\bibitem[Ku02]{Ku02} 
S.~Kumar,  {\em Kac-Moody Groups, their Flag Varieties and Representation Theory}, 
Progress in Mathematics, Vol. 204, Birkha{\"u}ser, Boston, MA, 2002. 

\bibitem[La02]{La02} 
S.~Lang, {\em Algebra}, Third Edition, Graduate Texts in Mathematics, Vol. 211, Springer, New York, NY, 2002. 

\bibitem[Lo25]{Lo25} 
G.~Loos, On the relative cohomology for algebraic groups, arXiv:2505:00833. 

\bibitem[LW15]{LW15}
I.~Losev, B.~Webster, On uniqueness of tensor products of irreducible categorifications,  {\em Selecta Math.}, {\sf 21}, (2015),  345--377.


\bibitem[LS81]{LS81}
A.~Lascoux, M.-P.~Sch\"utzenberger, Polynomes de Kazhdan et Lusztig pour les Grassmanniennes,  
{\em Ast\'erisque}, {\sf87--88}, (1981), 249--266.



\bibitem[Maz14]{Maz14}
V.~Mazorchuk,  Parabolic category $\mathcal{O}$ for classical Lie superalgebras, {\em Advances in Lie Superalgebras},
149--166, Springer INdAM Ser.,  {\sf 7}, Springer, Cham, 2014.

\bibitem[Ma19]{Ma19} A.~Maurer, On the finite generation of relative cohomology for Lie superalgebras, 
{\em Proc. Amer. Math. Soc.}, {\sf 147}, (2019), 1897-1910. 

\bibitem[N92]{N92} D.K.~Nakano, Projective modules over Lie algebras of Cartan type, 
 {\em Mem. Amer. Math. Soc.}, {\sf 470}, (1992). 

\bibitem[Na00]{Na00}
M.B.~Nathanson,
{\em Elementary methods in number theory},
Graduate Texts in Mathematics, vol. 195, Springer, New York, 2000.

\bibitem[Se05]{Se05}
V.~Serganova, 
On representations of {C}artan type {L}ie superalgebras, 
{\em Amer. Math. Soc. Transl.}, {\sf 213}, (2005), 223--239.

\bibitem[Str05]{Str05} C.~Stroppel, Categorification of the Temperley-Lieb category, tangles, and cobordisms via projective functors, 
{\em Duke. Math. J.},  {\sf 126}, (2005), 547--596. 

\bibitem[Str09]{Str09} C.~Stroppel,  Parabolic category $\mathcal{O}$, perverse sheaves on {G}rassmannians, {S}pringer fibres and {K}hovanov homology, 
{\em Compos. Math.},  {\sf 145}, (2009), 954--992. 

\bibitem[Su07]{Su07} J.~Sussan, {\em Category $\mathcal O$ and $sl(k)$ link invariants}, 
Ph.D.\ Thesis (Yale University), 2007. 

\bibitem[SW12]{SW12} C.~Stroppel, B.~Webster, 2-block {S}pringer fibers: convolution algebras and coherent sheaves, 
{\em Comment. Math. Helv.}, {\sf 87}, (2012), 477--520. 

%
%
%
%
%


%
%

%
%


%
%
%

%


%
%
%


%
%
%
%

%
%

\end{thebibliography}
\end{document}